\newtheorem{thm}{Theorem}[section]
\newtheorem{lem}[thm]{Lemma}
\newtheorem{cor}[thm]{Corollary}
\newtheorem{prop}[thm]{Proposition}
\newenvironment{customthm}[1]
{\innercustomthm}
  {\endinnercustomthm}
\newenvironment{customlem}[1]
	{\innercustomlem}
	{\endinnercustomlem}
\theoremstyle{definition}
\newtheorem{defn}[thm]{Definition}
\newtheorem{eg}[thm]{Example}
\theoremstyle{remark}
\newtheorem{rem}[thm]{Remark}
\numberwithin{equation}{section}
 \tikzset{help lines/.style={step=#1cm,very thin, color=gray},
help lines/.default=.5} 
\tikzset{thick grid/.style={step=#1cm,thick, color=gray},
thick grid/.default=1} 
\DeclareMathOperator{\coker}{coker}
\DeclareMathOperator{\Hom}{Hom}%
\DeclareMathOperator{\Ext}{Ext}%
\DeclareMathOperator{\End}{End}%
\DeclareMathOperator{\colim}{colim}%
\DeclareMathOperator{\undim}{\underline{dim}}
\newcommand{\cA}{\ensuremath{{\mathcal{A}}}}
\newcommand{\cB}{\ensuremath{{\mathcal{B}}}}
\newcommand{\cC}{\ensuremath{{\mathcal{C}}}}
\newcommand{\cD}{\ensuremath{{\mathcal{D}}}}
\newcommand{\cE}{\ensuremath{{\mathcal{E}}}}
\newcommand{\cF}{\ensuremath{{\mathcal{F}}}}
\newcommand{\cL}{\ensuremath{{\mathcal{L}}}}
\newcommand{\cM}{\ensuremath{{\mathcal{M}}}}
\newcommand{\nM}{\ensuremath{{\mathcal{M}}}}
\newcommand{\cT}{\ensuremath{{\mathcal{T}}}}
\newcommand{\cW}{\ensuremath{{\mathcal{W}}}}
\newcommand{\mat}[1]{{\left[
\begin{matrix}#1\end{matrix}
\right]}}
\title{Linearity of stability conditions}
\author{Kiyoshi Igusa}
\address{Department of Mathematics, Brandeis University, Waltham, MA 02454}\email{igusa@brandeis.edu}
\subjclass[2010]{
16G10; 13F60}
\keywords{maximal green sequences, c-vectors, Harder-Narasimhan filtration, central charge, wide subcategories}
\begin{document}

\begin{abstract} For modules over an artin algebra a linear stability condition is given by a ``central charge'' and a nonlinear stability condition is given by the wall-crossing sequence of a ``green path''. Finite Harder-Narasimhan stratifications of the module category, maximal forward hom-orthogonal sequences and maximal green sequences, defined using Fomin-Zelevinsky quiver mutation are shown to be equivalent to finite nonlinear stability conditions when the algebra is hereditary.

This is the first of a series of three papers whose purpose is to determine all maximal green sequences of maximal length for quivers of affine type A and determine which are linear. See [1]. 
\end{abstract}

\maketitle




Stability conditions and Harder-Narasimhan filtrations are very active areas of research. Some interesting examples are: \cite{T}, \cite{GSZ}, \cite{FR}, \cite{KQ}, \cite{Z}, \cite{Q} and, by the author, \cite{BHIT}, \cite{IOTW}, \cite{IOTW2}, \cite{PartII}. Key references are \cite{King}, \cite{Rudakov}, \cite{HN}, \cite{B}, \cite{B2}, \cite{R}, \cite{Keller}, \cite{KS}, \cite{DW}. Stability functions on quiver representations were introduced by King \cite{King}. This was generalized to abelian categories by Rudakov \cite{Rudakov} who also proved the Harder-Narasimhan property \cite{HN} under a finiteness property which includes categories of vector bundles which he had been studying earlier and representations of finite dimensional algebras which we consider in this paper. Bridgeland \cite{B} extended linear stability conditions and HN-filtrations to triangulated categories. Reineke \cite{R} showed that the quantum Donaldson-Thomas invariant can be computed using a linear stability function. He wrote it as a sum of terms one for each stable modules of the stability condition. So, it became important to know which sets of modules are given by linear stability conditions. In \cite{Keller} {Keller} uses nonlinear stability conditions (equivalent to maximal green sequences for quivers with potential) to give a formula for the refined Donaldson-Thomas invariants of Kontsevich-Soibelman \cite{KS}. Bridgeland considers nonlinear stability conditions in more general contexts in \cite{B2}.

Derksen and Weyman showed that stability conditions, in terms of semi-invariants, can be used to obtain canonical representations of quivers \cite{DW2} and they also used it to give a new proof of the saturation conjecture for Littlewood-Richardson coefficients \cite{DW}. The Stability Theorem in \cite{DW} gives the precise relation between semi-invariants and stability conditions. This was later extended to the ``virtual Stability Theorem'' in \cite{IOTW} and, in the modulated case, in \cite{IOTW2}. In \cite{BHIT} these semi-invariant stability conditions were used to prove two conjectures about maximal green sequences. And they will be used in (b) below.

Let $\Lambda$ be a finite dimensional algebra over a field $K$. We will always use $n$ to denote the number of simple $\Lambda$-modules. Then every $\Lambda$-module $M$ has \emph{dimension vector}
\[
	\undim M:=(x_1,\cdots,x_n)\in\mathbb N^n\subset \mathbb Z^n
\]
where $x_i$ is the number of times the $i$th simple $\Lambda$-module $S_i$ occurs in the composition series of $M$. We consider five different concepts of stability for $\Lambda$-modules and whether these stability conditions are linear and/or green and/or finite.

\begin{enumerate}
\item[(a)] Stability functions\footnote{Stability functions in the same sense as in \cite{BST2}.} $Z_t:K_0\Lambda=\mathbb Z^n\to \mathbb C$.
\item[(b)] Wall crossing sequences $D(M_i)$ of a ``generic path'' $\gamma:\mathbb R\to \mathbb R^n$.
\item[(c)] Harder-Narasimhan stratifications of $mod\text-\Lambda$ (Definition \ref{def: HN-stratification}).
\item[(d)] Maximal forward hom-orthogonal sequences of Schurian modules. (Theorem \ref{main theorem}(d)) A module $M$ is called \emph{Schurian} if its endomorphism ring is a division algebra. 
\item[(e)] Sequences of $c$-vectors for ``reddening sequences'' given by Fomin-Zelevinsky mutation (Theorems \ref{thm: exchange matrix transforms}, \ref{Rem: nonlinear stability functions and reddening sequences}.
\end{enumerate}
\noindent These five stability concepts fall into three sets. 
\begin{enumerate}
\item (a) and (b) are easily seen to be equivalent in both linear and nonlinear cases. Furthermore, $Z_\bullet$ is ``green'' if and only if the corresponding wall crossing sequence is green where \emph{green} means the directional velocity of the path $\gamma_Z$ in the direction $\undim M_i$ is positive whenever $\gamma_Z(t_i)\in D(M_i)$ (Definition \ref{def of red or green path}, Lemma \ref{lem2}).
\item (c) and (d) are show to be equivalent in the finite case (when there are only finitely many strata in the HN-stratification and only one Schurian module in each stratum) (Theorem \ref{thm: HN statification is Schurian hom orthog}). Also, (a) implies (c) in the green case (Theorem \ref{thm: HN filtration}).
\item Fomin-Zelevinsky mutation (e) only makes sense when $\Lambda$ is a cluster-tilted algebra. This includes hereditary algebras. For these algebras, we claim that (e) is equivalent to the finite case of (b) and that, in the green case, (e) is equivalent to (c) and (d). However, we only prove these statements in the hereditary case in this paper and for cluster-tilted algebras of finite type in \cite{PartII}.
\end{enumerate}
To summarize:
\[
\xymatrixrowsep{10pt}\xymatrixcolsep{35pt}
\xymatrix{
(a) \ar@{<=>}[r] & (b)\ar@{=>}[r]^{green}& (c) \ar@{<=>}[r]^{finite} &(d)\ar@{=>}[r]^{hereditary} &(e)\ar@{<=>}[r]^{hereditary}_{finite}&(b)\\
	}
\]
In the finite, green, hereditary case, all five conditions are equivalent. So, we have five equivalent ways to describe the same sequence of Schurian $\Lambda$-modules. Furthermore, these modules will be uniquely determined by their dimension vectors (up to isomorphism).

\begin{customthm}{C}\label{main theorem}
Let $\Lambda$ be a finite dimensional hereditary algebra over a field $K$. Let $\beta_1,\cdots,\beta_m\in \mathbb N^n$ be any finite sequence of nonzero, nonnegative integer vectors. Then the following are equivalent.
\begin{enumerate}
\item[(a)] There is a nonlinear stability function $Z_t:K_0\Lambda\to \mathbb C$ which is green and has exactly $m$ semistable pairs $(M_i,t_i)$ with $t_1<t_2<\cdots<t_m$ so that $\undim M_i=\beta_i$ for all $i$.
\item[(b)] There is a generic green path $\gamma:\mathbb R\to\mathbb R^n$ which crosses the walls $D(M_i)$, $i=1,\cdots,m$ in that order, and no other walls, so that $\undim M_i=\beta_i$ for all $i$.
\item[(c)] There exist $\Lambda$-modules $M_1,\cdots,M_m$ with $\undim M_i=\beta_i$ which form a finite Harder-Narasimhan system for $\Lambda$. 
\item[(d)] There exist Schurian $\Lambda$-modules $M_1,\cdots,M_m$ with $\undim M_i=\beta_i$ so that \begin{enumerate}
\item[(1)] $\Hom_\Lambda(M_i,M_j)=0$ for $i<j$.
\item[(2)] No other modules can be inserted into the sequence preserving (1).
\end{enumerate}
\item[(e)] There is a maximal green sequence for $\Lambda$ of length $m$ whose $i$th mutation is at the $c$-vector $\beta_i$. 
\end{enumerate}
\end{customthm}

The equivalent $(a)\Leftrightarrow (b)$ is Proposition \ref{prop: (a) iff (b)}, a special case of Theorem \ref{thm: (a) iff (b)}. Theorem \ref{thm: HN statification is Schurian hom orthog} proves $(c)\Leftrightarrow (d)$. Corollary \ref{thm: nonlinear Z correspond to MGSs} shows that $(a)\Leftrightarrow (e)$. Theorem \ref{thm: hom-orthog = mgs} shows $(d)\Leftrightarrow (e)$. Much of this is well-known, e.g., $(c)\Leftrightarrow (e)$ is proved in the Dynkin case in \cite{Q}. The main new idea in this paper is (d) which is a very useful formulation of stability which will be used in the next paper \cite{PartII} to obtain new results about maximal green sequences of maximal length for cluster-tilted algebras. (In \cite{PartII} it is shown that (b), (d) and (e) are equivalent for cluster-tilted algebras of finite type over an algebraically closed field.) Note that Theorem \ref{thm: HN statification is Schurian hom orthog} proves that $(c)\Leftrightarrow (d)$ holds over any finite dimensional algebra. Keller's survey article \cite{KD} also gives a very useful summary of known results.

This paper is the first of three papers motivated by the question of which stability conditions are ``linear''. By this we mean that it is given by a linear stability function $Z:K_0\Lambda\to \mathbb C$ also called a ``central charge''. This question originates in a conjecture by Reineke \cite{R} who asks: For any Dynkin quiver $Q$, there is a (classical) slope function $\mu$ whose corresponding central charge $Z$ makes all $KQ$-modules stable? See Remark \ref{rem: slope function}. Qiu partially solved this problem in \cite{Q} where he shows that, for some orientation for each Dynkin, there is a central charge making all indecomposable modules stable.

{This paper, together with \cite{AI}, \cite{PartII}} addresses the extension of Reineke's question to a hereditary algebra $\Lambda$ of affine type $\tilde A_{n-1}$ and to cluster-tilted algebras of finite type. We know by \cite{BDP}, \cite{BHIT} that there are only finitely many maximal green sequences. So, there is a longest one. Using all the equivalent formulations, the problem now comes in three parts.
\begin{enumerate}
\item Find the maximum length of all maximal green sequences for $\Lambda$. Equivalently, find the maximum size of a finite HN-system for $\Lambda$. Call this $L$.
\item Describe all $L$ element sets of $\Lambda$-modules which can be arranged into a maximal hom-orthogonal sequence.
\item Which of these sets is the set of stable modules of a linear stability function $Z:K_0\Lambda\to\mathbb C$? 
\end{enumerate}

In \cite{AI} we completely solve this problem in the case $\tilde A_{a,b}$, the tame hereditary algebra given by a cyclic quiver with $a$ arrows pointed one way and $b=n-a$ arrows pointed the other way. For example, $L=\binom n2+ ab$. When $(a,b)=(n-1,1)$ this is already known \cite{Kase}. The case $(a,b)=(n,0)$ is also solved in \cite{AI}. This is cluster-tilted of type $D_n$. In this case $L=\binom n2+n-1$, there are $n$ sets and all are given by linear stability functions.

The purpose of the present paper and the next \cite{PartII} is to lay the foundations for these result. This paper addresses different notions of stability for hereditary algebras and \cite{PartII} addresses different notions of stability for cluster-tilted algebras of finite type.

\vskip.2cm 

Contents of the paper:\vskip.2cm

Section \ref{sec1}. We discuss the definition of a linear stability condition for representations of a finite dimensional algebra, or, more generally, for nilpotent representations of any modulated quiver. It is immediate (Theorem \ref{thm: path for Z goes through D(M)}) that a linear stability function $Z$ corresponds to a linear path through semistability sets $D(M)$, also called \emph{walls}, for $Z$-semistable modules $M$. We also go through one example, the cyclic quiver with three vertices module $rad^k$ for various $k\ge2$.

{Section \ref{sec2}. We define nonlinear stability functions $Z_\bullet$. The corresponding nonlinear paths cross the walls $D(M)$ for $Z_\bullet$-semistable modules $M$. Conversely, we show, in Theorem \ref{thm: (a) iff (b)}, that any ``reddening path'' (Definition \ref{def: reddening path}) comes form a nonlinear stability function. Section \ref{ss: wide subcategory proof} gives a short proof of the well-known fact that, for any subset $S\subset\mathbb R^n$, $\cW(S)$, the full subcategory of $mod\text-\Lambda$ of all $M$ so that $S\subseteq D(M)$, is a wide subcategory of $mod\text-\Lambda$. 
Section \ref{ss: HN filtration} proves that any green path (Definition \ref{def: reddening path}) gives a Harder-Narasimhan stratification of $mod\text-\Lambda$ (Definition \ref{def: HN-stratification}) for any finite dimensional algebra $\Lambda$.}


Section \ref{sec3}. We define a ``finite HN-system'' (Definition \ref{def: finite HN-system}) for $mod$-$\Lambda$ to be a special case of a finite HN-stratification and we show that it is equivalent to a ``maximal forward hom-orthogonal sequence'' (Definition \ref{def: forward hom-orthogonal}) of Schurian $\Lambda$-modules $M_1,\cdots,M_m$. We observe that a finite, green, nonlinear stability function $Z_\bullet$ gives such a finite HN-system. This is the implication $finite\,green\,(a)\Rightarrow finite\,(c)\Leftrightarrow (d)$ mentioned above. 

Section \ref{sec4}. We show that, for any hereditary algebra $\Lambda$, the finite nonlinear stability functions give ``reddening sequences'' and all reddening sequences are given in this way. As a special case, finite green stability functions give ``maximal green sequences'' and all maximal green sequences are given in that way. We use the semi-invariant wall-crossing description of maximal green sequences from \cite{IOTW2} to make this correspondence. We also show that, in the hereditary case, maximal green sequences are equivalent to maximal forward hom-orthogonal sequences of Schurian modules. This shows that all five notions of stability in Theorem \ref{main theorem} above are equivalent in the finite, green hereditary case. 

Section \ref{sec5} proves the crucial Theorem \ref{lemma A} which implies that, for any hereditary $\Lambda$, those semistability sets $D(M)$ which are not equal to the semi-invariant sets from \cite{IOTW2}, i.e., the nonexceptional semistability sets, are never encountered in a finite path. This implies $(b)\Leftrightarrow(e)$ since that statement was proved in \cite{IOTW2} using semi-invariant walls.

Finally, we should mention that many of these results have been extended to arbitrary finite dimensional algebras using $\tau$-tilting by Br\"ustle, Smith and Treffinger \cite{BST}. Also, recently, Li and Liu \cite{LL} have extended these results to any abelian length category and have obtained a numerical criterion for which maximal green sequences (defined as maximal chains of torsion classes) are linear.

%
%

\section{Linear stability functions}\label{sec1}

Linear stability conditions are given in two equivalent ways: by a ``central charge'' $Z:K_0\Lambda\to\mathbb C$ or by a linear ``green path'' $\gamma:\mathbb R\to\mathbb R^n$. A $\Lambda$-module $M$ is $Z$-semistable if and only if the corresponding path $\gamma_Z$ crosses the semistability set $D(M)$. Following a suggestion by Yang-Hui He, we treat $\Lambda$ as a modulated quiver with unspecified relations. This is equivalent to considering nilpotent representations of the quiver.



\subsection{Nilpotent representations}\label{ss: nilpotent representations}

Let $K$ be any field and let $\nM=(\{F_i\},\{M_{ij}\})$ be a modulated quiver over $K$, possibly with oriented cycles. This is define by a finite set $\{F_1,\cdots,F_n\}$ of finite dimensional division algebras over $K$ together with finite dimensional $F_i$-$F_j$ bimodules $M_{ij}$. A finite dimensional \emph{representation} $X$ of $\nM$ is defined to be a collection of finite dimensional right $F_i$-vector spaces $X_i$ together with $F_j$ linear maps
\[
	X_i\otimes_{F_i}M_{ij}\to X_j
\]
for all $i,j$. Representations of $\nM$ are the same as right modules over the \emph{tensor algebra} $T\nM$ of $\nM$ which is defined to be the direct sum $T\nM=\coprod_{k\ge 0}T_k\nM$ where $T_0\nM=\prod F_i$ and, for $k\ge1$, $T_k\nM$ is the direct sum of all tensor paths of length $k$:
\[
	T_k\nM=\coprod M_{j_0j_1}\otimes_{F_{j_1}}M_{j_1j_2}\otimes_{F_{j_2}}\cdots \otimes_{F_{j_{k-1}}}M_{j_{k-1}j_k}
\]
Let $R\nM=\coprod_{k\ge 1}T_k\nM$ and let $nmod\text-\nM$ be the category of those finitely generated right $T\nM$ modules on which $R\nM$ act nilpotent. We call such modules \emph{nilpotent} $\nM$-modules and we also refer to the corresponding representations of $\nM$ as \emph{nilpotent}.

Each nilpotent module $X$ is a module over $T\nM/R\nM^m$ for some $m$ and thus a module over a finite dimensional algebra over $K$. For example, if $\nM$ is given by a loop $a$ at a single vertex then $T\nM=K[a]$ and $nmod\text-\nM$ is the category of all finite dimensional vector spaces $X$ together with a nilpotent endomorphism $a$. Then $nmod\text- K[a]$ has only one simple module corresponding to the maximal ideal $R=(a)$. In general, the category $nmod\text-\nM$ has only $n$ simple modules $S_1,\cdots,S_n$ which are one-dimensional over $F_1,\cdots,F_n$ respectively.

A nilpotent module $X$ gives a nilpotent representation of the modulated quiver $\nM$ in a standard way by letting $X_i$ be the $F_i$-vector space $X_i=Xe_i$ where $e_i$ is unity in $F_i$ and, for every $i,j$ taking $X_i\otimes M_{ij}\to X_j$ to be the $F_j$-linear map given by the action of $M_{ij}\subseteq T\nM$ on $X$.

The category $nmod\text-\nM$ is based on a suggestion by Yang-Hui He at a conference at the Chinese University of Hong Kong. This category is the union or colimit:
\[
	nmod\text-\nM=\bigcup mod\text-T\nM/J=\colim mod\text-T\nM/J
\]
of the module categories $mod\text-T\nM/J$ of all finite dimensional algebras of the form $\Lambda=T\nM/J$ where $J$ is an {admissible} ideal $J\subseteq R\nM^2$ where \emph{admissible} means that $J$ is a two sided ideal in $T\nM$ so that $R\nM^k\subseteq J\subseteq R\nM^2$ for some $k\ge 2$. Every nilpotent representation of $\nM$ is a $T\nM/J$-module for some $J$. Conversely, every module over $\Lambda=T\nM/J$ is an object of $nmod\text-\nM$ and every subquotient module of such a nilpotent module is also a $\Lambda$-module.

Some readers were confused by the relationship between $nmod\text-\nM$ and $mod\text-\Lambda$. So, we separated them into two cases and point out differences along the way. We use the notation $\cL$ to denote either $nmod\text-\nM$ or $mod\text-\Lambda$ for some finite dimensional algebra $\Lambda$. We always assume that $\cL$ has $n$ simple modules. All objects of $\cL$ are finite dimensional.


\subsection{Linear stability function $Z$}

{For $\cL=nmod$-$\nM$ or $mod\text-\Lambda$, let $K_0\cL$ be the Grothendieck group of the category $\cL$. Then $K_0\cL\cong \mathbb Z^n$ where we identify $[M]\in K_0\cL$ with $\undim M\in\mathbb Z^n$ giving the composition series of $M$. Thus}
\begin{equation}\label{dot prod equation}
	\dim_KM=(\dim_K S_1,\cdots,\dim_K S_n)\cdot \undim M.
\end{equation}
{We write $K_0\Lambda$ and $K_0\nM$ for $K_0\cL$ in the two cases.} Note that $K_0\nM=K_0 \Lambda$ for any $\Lambda=T\nM/J$.

 \begin{defn} A \emph{linear stability function} for $\cL$ is an additive map:
\[
	Z:K_0\cL\to \mathbb C
\]
which we write as:
\[
	Z(x)=a\cdot x+ib\cdot x = r(x)e^{i\theta(x)}
\]
where $a\in\mathbb R^n$, $b\in (0,\infty)^n$ are fixed and $0<\theta(x)<\pi$. For any $M\in \cL$, let
\[
	\mu(M)=\mu_Z(M):=\frac{a\cdot\undim M}{b\cdot\undim M}=\cot \theta( M)
\]
where $\theta( M)=\theta(\undim M)$. This is called the \emph{slope} of $M$. Note that $\mu(M)$ is a monotonically decreasing function of $\theta( M)$. A nonzero object $M\in\cL$ is called $Z$-\emph{semistable}, resp. $Z$-\emph{stable}, if $\mu(M')\ge \mu(M)$, resp. $\mu(M')> \mu(M)$, for all nonzero submodules $M'\subsetneq M$.
\end{defn}

We should point out that this definition of stability is the opposite of that in some other papers, such as \cite{BST2} and \cite{IOTW} where modules are stable when their slopes are larger than the slopes of proper submodules.

\begin{rem}\label{rem: slope function}
An important special case is the \emph{classical choice} $b=(\dim_K S_1,\cdots,\dim_K S_n)$. For any $a\in \mathbb R^n$ the resulting function $\mu(M)=a\cdot \undim M/\dim_K M$ is a \emph{classical slope function}. Reineke's original conjecture \cite{R} is that, for every Dynkin quiver, there is a classical slope function making all indecomposable modules stable.
\end{rem}

We observe that simple modules are always stable. Often the restriction on $\theta$ is taken to be $0\le \theta<\pi$ and $Z:K_0\Lambda\to\mathbb C$ is called a \emph{central charge}. We take $\theta>0$ so that the slope function $\mu$ is defined.


\subsection{Linear green path $\gamma$}

For any linear stability function $Z:K_0\cL\to \mathbb C$ we will show that there is a corresponding linear path $\gamma_Z:\mathbb R\to\mathbb R^n$ which crosses the wall $D(M)$, defined below, whenever $M$ is $Z$-semistable.

\begin{defn}
For any object $M\in \cL$, we define $H(M)$ to be the hyperplane in $\mathbb R^n$ perpendicular to $\undim M$:
\[
	H(M):=\{x\in\mathbb R^n\,|\, x\cdot \undim M=0\}.
\]
The \emph{semistability set} of $M$ is defined to the subset $D(M)\subseteq H(M)$ given by
\[
	D(M)=\{x\in H(M)\,|\, x\cdot\undim M'\le 0\text{ for all $M'\subseteq M$}\}.
\]
The \emph{interior} $int\,D(M)$ of $D(M)$ is defined to be the subset of all $x\in D(M)$ so that $x\cdot\undim M'<0$ for all nonzero proper submodules $M'\subsetneq M$. The \emph{boundary} of $D(M)$ is the complement: $\partial D(M):=D(M)-int\,D(M)$. 
\end{defn}

Given a linear stability function $Z(x)=a\cdot x+ib\cdot x$, the \emph{corresponding path} $\gamma_Z:\mathbb R\to \mathbb R^n$ is the linear path given by
\[
	\gamma_Z(t)=tb-a.
\]

\begin{thm}\label{thm: path for Z goes through D(M)}
Let $M$ be an object of $\cL=nmod\text-\nM$ or $mod\text-\Lambda$. Then $M$ is $Z$-semistable, resp. $Z$-stable, if and only if $\gamma_Z(t)\in D(M)$, resp. $int\,D(M)$, for some $t\in R$. Furthermore, in that case, $t=\mu_Z(M)$.
\end{thm}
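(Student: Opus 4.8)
The plan is to unwind the definitions and reduce everything to a single scalar computation along the line $\gamma_Z(t)=tb-a$. First I would note that for any nilpotent module $N$, the dot product of $\gamma_Z(t)$ with $\undim N$ is
\[
	\gamma_Z(t)\cdot\undim N = t\,(b\cdot\undim N) - (a\cdot\undim N) = (b\cdot\undim N)\bigl(t-\mu_Z(N)\bigr),
\]
using $b\cdot\undim N>0$ (since $b\in(0,\infty)^n$ and $\undim N\in\NN^n$ is nonzero) and the definition $\mu_Z(N)=(a\cdot\undim N)/(b\cdot\undim N)$. This identity is the entire engine of the proof: the sign of $\gamma_Z(t)\cdot\undim N$ is the sign of $t-\mu_Z(N)$, because $b\cdot\undim N$ is strictly positive.

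Next I would apply this with $N=M$ to see that $\gamma_Z(t)\cdot\undim M=0$ if and only if $t=\mu_Z(M)$; hence the only candidate value of $t$ for which $\gamma_Z(t)$ can lie on the hyperplane $H(M)$ is $t=\mu_Z(M)$, which already gives the ``furthermore'' clause. So from now on set $t=\mu_Z(M)$, so that $\gamma_Z(t)\in H(M)$ automatically. Then, for a nonzero submodule $M'\subseteq M$, the identity with $N=M'$ gives
\[
	\gamma_Z(\mu_Z(M))\cdot\undim M' = (b\cdot\undim M')\bigl(\mu_Z(M)-\mu_Z(M')\bigr),
\]
so $\gamma_Z(\mu_Z(M))\cdot\undim M'\le 0$ exactly when $\mu_Z(M')\ge\mu_Z(M)$, and $<0$ exactly when $\mu_Z(M')>\mu_Z(M)$. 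Taking this over all nonzero proper submodules $M'\subsetneq M$ (the case $M'=M$ being the already-established equality $\gamma_Z(t)\cdot\undim M=0$) translates the definition of $\gamma_Z(\mu_Z(M))\in D(M)$, resp. $\in int\,D(M)$, verbatim into the definition of $M$ being $Z$-semistable, resp. $Z$-stable. For the converse direction, if $\gamma_Z(t)\in D(M)$ for some $t$ then in particular $\gamma_Z(t)\in H(M)$, which by the first paragraph forces $t=\mu_Z(M)$, and then the equivalence just proved applies.

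I do not anticipate a genuine obstacle here; the statement is essentially a bookkeeping translation. The one point deserving care is the handedness/monotonicity: one must be sure the inequality $x\cdot\undim M'\le 0$ in the definition of $D(M)$ matches $\mu_Z(M')\ge\mu_Z(M)$ and not the reverse, which hinges precisely on the positivity of $b\cdot\undim M'$ and on the sign conventions in $\gamma_Z(t)=tb-a$ together with $\mu(M)=\cot\theta(M)$ being decreasing in $\theta$. Checking this sign bookkeeping consistently across semistable/stable and across submodule/quotient is the only place an error could creep in.
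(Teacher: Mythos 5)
Your proof is correct and follows essentially the same route as the paper: the single identity $\gamma_Z(t)\cdot\undim N=(b\cdot\undim N)(t-\mu_Z(N))$, specialized first to $N=M$ to pin down $t=\mu_Z(M)$ and then to submodules $N=M'$ to translate the inequalities defining $D(M)$ and $int\,D(M)$. The paper states only the semistable case explicitly and leaves the stable case implicit; you spell out both, which is a harmless expansion.
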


\begin{proof}
We prove the $Z$-semistable statement. $\gamma_Z(t_0)$ lies in $H(M)$ if and only if $t_0=\mu_Z(M)$. For any $M'\subset M$ let $t_1=\mu_Z(M')$. Then $\gamma_Z(t_1)\cdot \undim M'=0$. So, 
\[
	\gamma_Z(t_0)\cdot \undim M'=(t_0-t_1)b\cdot \undim M'
\]
Since $b\in\mathbb R^n$ has positive coordinates, this quantity is $\le0$ for all $M'\subset M$, making $\gamma_Z(t_0)\in D(M)$, iff $t_1=\mu(M')> t_0=\mu(M)$ for all $M'\subset M$, i.e., $M$ is $Z$-semistable.
\end{proof}

We recall that an object in an additive category is called \emph{Schurian} if its endomorphism ring is a division algebra. Schurian modules are also called ``bricks''. The following corollary was first observed in a more general context in \cite{Rudakov}.

\begin{cor}
If $M$ is $Z$-stable then $M$ is Schurian.
\end{cor}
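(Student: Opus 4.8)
The plan is to prove the contrapositive-free version of the standard fact that stable objects have no proper endomorphisms: I will show that every nonzero endomorphism $f\colon M\to M$ of a $Z$-stable module $M$ is an isomorphism. Since $M$ is finite dimensional over $K$, the endomorphism ring $\End_\Lambda(M)$ is then a finite dimensional $K$-algebra in which every nonzero element is a unit, hence a division algebra, and $M$ is Schurian by definition.

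The key tool is the ``see-saw'' behavior of the slope $\mu=\mu_Z$. For any short exact sequence $0\to A\to B\to C\to 0$ of nilpotent modules with $A,C\neq 0$, the quantity $b\cdot\undim(-)$ is additive and strictly positive on nonzero modules while $a\cdot\undim(-)$ is additive, so
\[
\mu(B)=\frac{a\cdot\undim B}{b\cdot\undim B}=\frac{(b\cdot\undim A)\,\mu(A)+(b\cdot\undim C)\,\mu(C)}{b\cdot\undim A+b\cdot\undim C}
\]
is a weighted average of $\mu(A)$ and $\mu(C)$ with positive weights; in particular $\mu(B)\ge\min(\mu(A),\mu(C))$, with strict inequality when $\mu(A)\neq\mu(C)$. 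I would record this elementary observation first.

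Now let $f\colon M\to M$ be nonzero, put $N=\ker f$ and $L=\im f$, so $N\subsetneq M$ and $L\cong M/N$. If $N=0$ then $f$ is injective, so $L\cong M$ satisfies $\undim L=\undim M$; being a submodule of the finite dimensional module $M$ with full dimension vector, $L=M$, and $f$ is an isomorphism. If $N\neq 0$, then $N$ is a nonzero proper submodule, so $Z$-stability gives $\mu(N)>\mu(M)$; moreover $L\neq 0$ is a submodule of $M$, so either $L=M$ or (again by stability) $\mu(L)>\mu(M)$. Applying the see-saw estimate to $0\to N\to M\to L\to 0$ gives $\mu(M)\ge\min(\mu(N),\mu(L))$, which is incompatible with having both $\mu(N)>\mu(M)$ and $\mu(L)>\mu(M)$; hence $L=M$, so $f$ is surjective and therefore, by equality of $K$-dimensions, an isomorphism. (Alternatively, one may argue via walls: by Theorem \ref{thm: path for Z goes through D(M)} the point $x=\gamma_Z(\mu_Z(M))$ lies in $int\,D(M)$, so $x\cdot\undim N<0$ for every nonzero proper submodule $N\subsetneq M$; since $x\cdot\undim M=0$ and $\undim L=\undim M-\undim N$, one gets $x\cdot\undim L>0$, forcing $L=M$.)

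There is no real obstacle here: the only points needing care are the weighted-average inequality for $\mu$ and the remark that a submodule of a finite dimensional module with the same dimension vector is the whole module, both of which are routine. The slightly delicate bookkeeping is just the case split on whether $\ker f$ or $\im f$ is proper, and in each case stability plus the see-saw property closes the argument.
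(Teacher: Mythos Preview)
Your proof is correct. The see-saw argument is sound, and the only cosmetic wrinkle is that in Case~2 you conclude $f$ is an isomorphism, which in fact contradicts the case hypothesis $N\neq 0$; it would be slightly cleaner to phrase this as ``Case~2 cannot occur,'' but the logic is fine either way.

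The paper takes a different, shorter route: it argues the contrapositive via the wall description. If $M$ admits an endomorphism with image $M'\subsetneq M$, then $M'$ is simultaneously a submodule and a quotient of $M$, which forces $x\cdot\undim M'=0$ for every $x\in D(M)$; hence $D(M)\subseteq D(M')$, so $int\,D(M)=\emptyset$, and by Theorem~\ref{thm: path for Z goes through D(M)} $M$ cannot be $Z$-stable. This is essentially your parenthetical alternative. Your main argument via the weighted-average inequality for $\mu$ is the classical Schur-lemma-for-stable-objects proof and is entirely self-contained (it does not invoke Theorem~\ref{thm: path for Z goes through D(M)}), whereas the paper leverages the just-proved geometric characterization to get a two-line proof. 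Both are standard; the paper's version fits its narrative of translating stability into wall geometry.
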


\begin{proof}
If $M$ has a nontrivial endomorphism with image $M'\subsetneq M$ then $D(M)$ is contained in $D(M')$. So, $int\,D(M)$ is empty. By the Theorem above, $M$ cannot be $Z$-stable.
\end{proof}


\subsection{Example of cyclic $A_3$ with three possible algebras}

We consider the difference between $nmod\text-\nM$ and $mod\text-\Lambda$ when $\Lambda=T\nM/J$ for various admissible ideals $J$. Basically, any nilpotent module $M\in nmod\text-\nM$ is a $\Lambda$-module for some $\Lambda=T\nM/J$. If $M$ is a $\Lambda$-module then so are all of its subquotient modules. Therefore, the set $D(M)$ will be the same. It will be the set of all $x\in \mathbb R^n$ so that $x\cdot\undim M=0$ and $x\cdot\undim M'\le0$ for all $\Lambda$-submodules $M'$ of $M$. We give an example where $\cM$ is the simply laced quiver $Q$ given by a single oriented cycle of length 3. 

Recall that the modulated quiver given by a directed graph $Q$ has division algebras all equal to the ground field $K$ and bimodules $M_{ij}$ equal to $K^{m_{ij}}$ where $m_{ij}$ is the number of arrows from $i$ to $j$. The tensor algebra is called the \emph{path algebra} of $Q$ and denoted $KQ$. As a vector space it has a basis given by the paths including those of length zero which are the vertices of $Q$.
\[
\xymatrixrowsep{20pt}\xymatrixcolsep{10pt}
\xymatrix{
Q:&1 \ar[rr] && 2\ar[dl]\\
&& 3\ar[ul]
	}
\]
We consider the algebra $\Lambda_k=KQ/R^{k+1}$ where $R=RQ$ is the ideal generated by all paths of length $\ge1$. These are string algebras and all indecomposable modules are string modules \cite{BR}. Thus the algebra $\Lambda_k$ has $3(k+1)$ indecomposable modules (up to isomorphism) given by paths of length $\le k$ starting at any vertex.

In Figure \ref{FigureA3}, the left side shows the semistability sets for all six modules over $\Lambda_1=KQ/R^2$ and the right side shows all nine modules over $\Lambda_2=KQ/R^3$ and all 12 modules over $\Lambda_3=KQ/R^4$. 

Any $\Lambda_k$-module is a module over $\Lambda_j$ for all $j>k$. Therefore, the set $\bigcup D(M)$ for $\Lambda_1$ is a subset of $\bigcup D(M)$ for $\Lambda_2$. It will turn out that $\bigcup D(M)$ for $\Lambda_3$ is equal to that of $\Lambda_2$ which is the reason that the same figure (the right side of Figure \ref{FigureA3}) illustrates both. The figure shows the stereographic projection of the intersections of $D(M)$'s with the unit sphere $S^2$. 

For $\Lambda_1=KQ/R^2$, $D(S_i)=H(S_i)$ are the hyperplanes perpendicular to the unit vectors $\undim S_i=e_i$. These intersect $S^2$ in great circles which stereographically project to three circles in $\mathbb R^2$. Each set $D(P_i)=D(X_i)$ lies inside the $D(S_i)$ circle and outside the $D(S_{i+1})$ circle (with index modulo 3) since $S_i$ is a quotient of $P_i$ and $S_{i+1}$ is a submodule.

For $\Lambda_2=KQ/R^3$, we have projective modules $P_i'$ of length 3. The sets $D(P_i')$ all lie in the hyperplane $(1,1,1)^\perp$. But $D(P_1')$ lies inside $D(S_1)$ and outside $D(S_3)$ since $S_3\subset P_1'$ and $S_1=P_1'/X_2$. So, $D(P_1')$ is the part of the red circle between points $x$ and $z$. Similarly $D(P_2')$ is the part of the red circle from $x$ to $y$ and $D(P_3')$ is the part from $y$ to $z$. 

For $\Lambda_3=KQ/R^4$, the projective modules $P_i''$ have length 4 and they have the same simple $S_i$ on the top and bottom. This forces $D(P_i'')$ to lie in the line $H(P_i'')\cap H(S_i)=(1,1,1)^\perp\cap e_i^\perp$. Also, $X_i$ is a quotient of $P_i''$, so $D(P_i'')$ lies on the positive side of the hyperplane $D(X_i)$. In the figure we get the single points $x$ for $D(P_1'')$, $y$ for $D(P_2'')$ and $z$ for $D(P_3'')$.

{
\begin{center}
\begin{figure}[htbp]
\begin{tikzpicture} 
{
\clip (-3.5,-3.3) rectangle (11.8,4);
\begin{scope}[scale=.6]
\draw (-4.8,4.5) node{ $D(S_1)$};
\draw[color=blue] (-3.7,3) node{ $D(P_1)$};
\draw (4.8,4.5) node{ $D(S_2)$};
\draw[color=blue] (2.8,4.2) node{ $D(P_2)$};
\draw (3.2,-2) node{ $D(S_3)$};
\draw[color=blue] (.3,-2) node{ $D(P_3)$};
{
\begin{scope}[xshift=.75cm,yshift=1.3cm]
	\begin{scope}[rotate=60]
		\clip (0,-5) rectangle (5,5);
		\draw[color=blue,very thick] (0,0) ellipse [x radius=3cm,y radius=2.6cm];
	\end{scope}
		\draw[very thick] (.75,1.3) circle [radius=3cm]; 
		\draw[very thick] (-2.25,1.3) circle [radius=3cm]; 
		\draw[very thick] (-.75,-1.3) circle [radius=3cm]; 
	\begin{scope}
		\clip (-.75,-5.2) rectangle (-4.25,5);
		\draw[very thick,color=blue] (-.75,1.3) ellipse [x radius=3cm,y radius=2.6cm];
	\end{scope}
\end{scope}
}
	\begin{scope}[xshift=-.75cm,yshift=1.3cm,rotate=120]
		\clip (0,-5) rectangle (-5,5);
		\draw[very thick, color=blue] (0,0) ellipse [x radius=3cm,y radius=2.6cm];
	\end{scope}
\draw (0,-4.6) node{$6\ D(M)$'s for $\Lambda_1=KQ/R^2$};
\end{scope} 
}
{
\begin{scope}[scale=.6,xshift=13cm]
\draw (0,-4.6) node{$9\ D(M)$'s for $\Lambda_2=KQ/R^3$};
\coordinate (P1) at (-1.5,2.6);
\coordinate (P2) at (1.5,2.6);
\coordinate (P3) at (0,0);
\coordinate (nP1) at (3,0);
\coordinate (nP2) at (-3,0);
\coordinate (nP3) at (0,5.2);
\coordinate (I3) at (2.67,1.39);
\coordinate (nI2) at (-1.05,4.2);
\coordinate (nI1) at (-1.65,-.4);
\coordinate (S2) at (2.85,3.4);
\coordinate (nS2) at (-1.95,.6);
\draw (-4.8,.5) node{ $D(S_1)$};
\draw[color=red] (-4,-1.5) node{ $D(P_3')$};
\draw[color=red,thick,->] (-3.1,-1.5) ..controls (-2,-1.5) and (-1.2,-1.2)..(-1,-.9);
\draw[color=red] (-5.3,4.8) node{ $D(P_1')$};
\draw[color=red,thick,->] (-4.4,4.7) ..controls (-3.5,4.7) and (-3.5,3.5)..(-2.4,3.2);
\draw[color=blue] (-3.7,2.5) node{ $D(X_1)$};
\draw (4.8,4.5) node{ $D(S_2)$};
\draw[color=red] (5.6,1.5) node{ $D(P_2')$};
\draw[color=red,thick,->] (4.7,1.6) ..controls (4,1.6) and (4,2.5)..(2.7,2.5);
\draw[color=blue] (2.8,4.2) node{ $D(X_2)$};
\draw (3.2,-2) node{ $D(S_3)$};
\draw[color=blue] (.3,-2) node{ $D(X_3)$};
{
\begin{scope}[xshift=.75cm,yshift=1.3cm]
	\begin{scope}[rotate=60]
		\clip (0,-5) rectangle (5,5);
		\draw[color=blue,very thick] (0,0) ellipse [x radius=3cm,y radius=2.6cm];
	\end{scope}
		\draw[very thick] (.75,1.3) circle [radius=3cm]; 
		\draw[very thick] (-2.25,1.3) circle [radius=3cm]; 
		\draw[very thick] (-.75,-1.3) circle [radius=3cm]; 
	\begin{scope}
		\clip (-.75,-5.2) rectangle (-4.25,5);
		\draw[very thick,color=blue] (-.75,1.3) ellipse [x radius=3cm,y radius=2.6cm];
	\end{scope}
	\begin{scope}
		\draw[color=red,very thick] (-.75,.43)  circle [radius=2.68cm];
		\end{scope}
\end{scope}
}
	\begin{scope}[xshift=-.75cm,yshift=1.3cm,rotate=120]
		\clip (0,-5) rectangle (-5,5);
		\draw[very thick, color=blue] (0,0) ellipse [x radius=3cm,y radius=2.6cm];
	\end{scope}
\coordinate (I1) at (1.05,4.2);
\coordinate (I11) at (1.2,4.2);
\coordinate (I2) at (1.65,-.4);
\coordinate (I22) at (1.75,-.4);
\coordinate (nI3) at (-2.67,1.39);
\foreach \x in {I1,I2,nI3}
	\draw[fill] (\x) circle(3pt);
	\draw (I11) node[above]{$x$};
	\draw (I22) node[below]{$y$};
	\draw (nI3) node[left]{$z$};
\end{scope} 
}
\end{tikzpicture}
\caption{Stereographic projections of the intersections of $D(M)$'s with the unit sphere $S^2\subset \mathbb R^3$. The modules $X_i$ over $\Lambda_2$ are equal to the projective $\Lambda_1$-modules $P_i$. The projective $\Lambda_2$-modules $P_i'$ share identical dimension vectors $\undim P_i'=(1,1,1)$. And, the union of $D(P_i')$, $i=1,2,3$, form the hyperplane $(1,1,1)^\perp$ which is the red circle in the figure.
}
\label{FigureA3}
\end{figure}
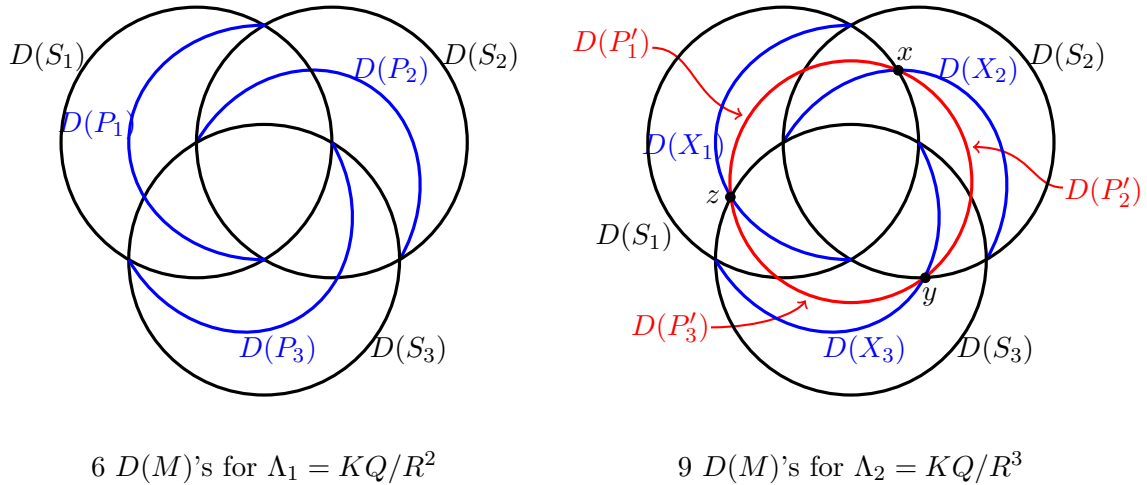
\end{center}
}
%

%
%

\section{Nonlinear stability functions}\label{sec2}

In this section we define nonlinear stability functions $Z_t:K_0\cL\to \mathbb C$, $t\in\mathbb R$, and show that the corresponding nonlinear paths $\gamma_Z:\mathbb R\to \mathbb R^n$ cross the semistability sets $D(M)$ of the $Z_\bullet$-semistable modules. When $Z_\bullet$ is ``green'' we obtain a Harder-Narasimhan stratification of the category $\cL=mod\text-\Lambda$ or $nmod\text-\cM$.


\subsection{Definition of nonlinear $Z_\bullet$}

\begin{defn}\label{def: nonlinear stability function} We define a \emph{nonlinear stability function}\footnote{This is the same as the ``stability function'' considered in \cite{BST2}.} 
 on the $\cL$ to be a smooth ($C^1$) family of linear functions

\[
	Z_t:K_0\cL\cong \mathbb Z^n \to \mathbb C, \quad t\in\mathbb R
\]
given by
\[
	Z_t(x)=a_t\cdot x+b_t\cdot x\sqrt{-1}=r_t(x)e^{i\theta_t(x)}
\]
where:
\begin{enumerate}
\item $a_t\in\mathbb R^n$, $b_t\in (0,\infty)^n$, $0<\theta_t(x)<\pi$, i.e. the coordinates of $b_t$ are positive for all $t$.
\item $a_t$ and $b_t$ have velocity 0 for $|t|$ large, giving four constants: $a_\infty,b_\infty,a_{-\infty}$ and $b_{-\infty}$. 
\end{enumerate}
Note that $Z_t$ may be a different function for each $t$. Thus, a nonlinear stability function could also be called a ``time-dependent central charge''. 

For any object $M\in \cL$ let 
\begin{equation}\label{eq: def of slope}
	\mu_t(M):=\cot \theta_t(M)=\frac{a_t\cdot \undim M}{b_t\cdot \undim M}
\end{equation}
where $\theta_t(M)$ is short for $\theta_t(\undim M)$.
\end{defn}

\begin{defn}
Let $Z_\bullet$ be a nonlinear stability function on $\cL$. We say that $M\in\cL$ is \emph{$Z_\bullet$-stable/semistable} if, for some $t_0\in\mathbb R$ we have the following.
\begin{enumerate}
\item $M$ is stable/semistable with respect to $Z_{t_0}$.
\item $\mu_{t_0}(M)=t_0$.
\end{enumerate}
The pair $(M,t_0)$ will be called a $Z_\bullet$-\emph{stable/semistable pair}. Such a pair $(M,t_0)$ is \emph{green} or \emph{red} if
\begin{equation}\label{eq: when (b,t) is Z-green}
	\left.\frac d{dt}\mu_t(M)\right|_{t=t_0}<1
\end{equation}
or $>1$, respectively. We say the pair $(M,t_0)$ is \emph{generic} if it is green or red, i.e., the expression \eqref{eq: when (b,t) is Z-green} is not equal to 1. We say that $Z_\bullet$ is \emph{green}, resp. \emph{generic}, if all $Z_\bullet$-semistable pairs are green, resp. generic.
\end{defn}

All linear stability functions are green since $0<1$. For nonlinear stability functions, the condition \eqref{eq: when (b,t) is Z-green} is needed to obtain a ``green path'' and as a consequence obtain the Harder-Harasimhan filtration for any module $M$.


\subsection{Comparison with corresponding path}\label{ss: green paths}

Given any nonlinear stability function \[
Z_t(x)=a_t\cdot x+b_t\cdot x\sqrt{-1},\]
let $\gamma_Z:\mathbb R\to \mathbb R^n$ be the path
\[
	\gamma_Z(t)=t b_t- a_t
\]
Note that $\gamma_Z(t)\in H(M)$ if and only if 
\[
\gamma_Z(t)\cdot \undim M=tb_t\cdot \undim M-a_t\cdot \undim M=0
\] 
which is equivalent to:
\[
	t=\frac{a_t\cdot \undim M}{b_t\cdot \undim M}=\mu_t(M).
\]

\begin{lem}\label{lem1}\begin{enumerate}
\item $\gamma_Z(t_0)\in D(M)$ if and only if $M$ is $Z_{t_0}$-semistable. 
\item
$\gamma_Z(t_0)\in int\,D(M)$ if and only if $M$ is $Z_{t_0}$-stable. And, in that case, $M$ is Schurian.
\end{enumerate}
\end{lem}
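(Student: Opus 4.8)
The plan is to reduce both statements to the linear case already settled in Theorem~\ref{thm: path for Z goes through D(M)} and its first corollary, by \emph{freezing} the stability function at the single time $t_0$. The key observation is that $\gamma_Z(t_0)=t_0 b_{t_0}-a_{t_0}$ depends only on the values $a_{t_0},b_{t_0}$ of the coefficient functions at $t_0$, so it equals the value at parameter $s=t_0$ of the linear path $\gamma_{Z_{t_0}}(s)=s b_{t_0}-a_{t_0}$ attached to the \emph{linear} stability function $Z_{t_0}(x)=a_{t_0}\cdot x+i\,b_{t_0}\cdot x$. Thus $\gamma_Z(t_0)\in D(M)$ is the same as $\gamma_{Z_{t_0}}(t_0)\in D(M)$, and similarly with $int\,D(M)$.

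First I would note that, since $b_{t_0}\in(0,\infty)^n$ and $\undim M\in\NN^n$ is nonzero, the affine function $s\mapsto(s b_{t_0}-a_{t_0})\cdot\undim M$ is strictly increasing, so the linear path $\gamma_{Z_{t_0}}$ meets $H(M)$ — and hence $D(M)$ and $int\,D(M)$ — in at most one point, namely $s=\mu_{t_0}(M)$. For (1): Theorem~\ref{thm: path for Z goes through D(M)}, applied to the linear stability function $Z_{t_0}$, says $\gamma_{Z_{t_0}}(s)\in D(M)$ for some $s$ iff $M$ is $Z_{t_0}$-semistable, in which case necessarily $s=\mu_{Z_{t_0}}(M)=\mu_{t_0}(M)$. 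Combining, $\gamma_Z(t_0)=\gamma_{Z_{t_0}}(t_0)\in D(M)$ iff $M$ is $Z_{t_0}$-semistable \emph{and} $\mu_{t_0}(M)=t_0$ — and the last equation is automatic from $\gamma_Z(t_0)\in H(M)\supseteq D(M)$; so the right-hand side is just "$M$ is $Z_{t_0}$-semistable" (equivalently, "$(M,t_0)$ is a $Z_\bullet$-semistable pair"). For (2), the identical argument with $int\,D(M)$ in place of $D(M)$, using the $Z$-stable half of Theorem~\ref{thm: path for Z goes through D(M)}, gives $\gamma_Z(t_0)\in int\,D(M)$ iff $M$ is $Z_{t_0}$-stable; and then the first corollary to that theorem, applied to the linear function $Z_{t_0}$, shows that such $M$ is Schurian.

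The argument is essentially formal once the freezing observation is in place, so I do not expect a real obstacle. The one point requiring care is that $\gamma_Z$ is \emph{not} a straight line, so Theorem~\ref{thm: path for Z goes through D(M)} cannot be invoked globally; it must be applied pointwise at $t_0$ to the frozen linear function $Z_{t_0}$, together with the check that the normalization $\mu_{t_0}(M)=t_0$ is forced by $\gamma_Z(t_0)$ lying on $H(M)$. That verification, plus the strict monotonicity ensuring a unique intersection with $H(M)$, is the only substance in the proof.
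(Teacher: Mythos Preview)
Your approach is exactly the paper's: freeze the nonlinear family at $t_0$, observe $\gamma_Z(t_0)=\gamma_{Z_{t_0}}(t_0)$, and invoke Theorem~\ref{thm: path for Z goes through D(M)} (and its corollary) for the linear function $Z_{t_0}$. The paper's proof is the single sentence ``This follows from Theorem~\ref{thm: path for Z goes through D(M)} applied to the linear stability function $Z_{t_0}$,'' and you have unpacked precisely that.

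One small logical slip in your write-up: you argue $\gamma_Z(t_0)\in D(M)\iff\bigl[M\text{ is }Z_{t_0}\text{-semistable and }\mu_{t_0}(M)=t_0\bigr]$, then say the equation $\mu_{t_0}(M)=t_0$ is ``automatic from $\gamma_Z(t_0)\in H(M)$'' and drop it from the right-hand side. But that equation is automatic only from the \emph{left}-hand side; knowing $A\Leftrightarrow(B\wedge C)$ and $A\Rightarrow C$ does not give $A\Leftrightarrow B$. What you have actually proved is $\gamma_Z(t_0)\in D(M)\iff(M,t_0)$ is a $Z_\bullet$-semistable pair, which is the correct and intended reading of the lemma (the paper uses ``$Z_{t_0}$-semistable'' loosely here, consistent with the display immediately preceding the lemma and with how the lemma is later applied). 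Your parenthetical already says this, so just delete the ``automatic'' sentence and keep the pair formulation.
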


\begin{proof} This follows from Theorem \ref{thm: path for Z goes through D(M)} applied to the linear stability function $Z_{t_0}$.
\end{proof}

\begin{defn}\label{def of red or green path}
Let $v(t)=d\gamma_Z(t)/dt$ be the velocity vector of the smooth path $\gamma_Z$ at time $t$. If the path $\gamma_Z(t)$ crosses $D(M)\subset H(M)$ at $t=t_0$, we say that the crossing is \emph{green} if $v(t_0)\cdot \undim M>0$. We say it is \emph{red} if $v(t_0)\cdot \undim M<0$. Thus any transverse intersection is either green or red.
\end{defn}

\begin{lem}\label{lem2}
Suppose that $\gamma(t_0)\in D(M)$. Then the derivative of $\mu_t(M)$ at $t=t_0$ is not equal to $1$ if and only if $\gamma$ crosses the hyperplane $H(M)$ transversely at $t=t_0$. Furthermore the crossing is green/red if and only if the sign of $d\mu_t(M)/dt|_{t=t_0}-1$ is negative/positive, respectively.
\end{lem}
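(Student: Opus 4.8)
The plan is to unwind the definitions and reduce everything to a single computation of the derivative of $\gamma_Z(t)\cdot\undim M$ at $t=t_0$.

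First I would observe that the function $f(t):=\gamma_Z(t)\cdot\undim M$ is a smooth real-valued function of $t$, and by hypothesis $f(t_0)=0$ (since $\gamma(t_0)\in D(M)\subset H(M)$). The path crosses $H(M)$ transversely at $t=t_0$ precisely when $f'(t_0)\ne 0$, and the crossing is green (resp.\ red) when $f'(t_0)>0$ (resp.\ $f'(t_0)<0$), by Definition \ref{def of red or green path} since $f'(t_0)=v(t_0)\cdot\undim M$. So the whole statement amounts to showing that $f'(t_0)$ has the same sign as $-(d\mu_t(M)/dt|_{t=t_0}-1)$, i.e.\ that $f'(t_0)$ and $1-\mu_t(M)'|_{t_0}$ have the same sign, and in particular $f'(t_0)=0$ iff $\mu_t(M)'|_{t_0}=1$.

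The key step is the explicit computation. Writing $d:=\undim M$ and using $\gamma_Z(t)=t\,b_t-a_t$, we have
\[
	f(t)=t\,(b_t\cdot d)-(a_t\cdot d).
\]
Differentiating,
\[
	f'(t)=(b_t\cdot d)+t\,(b_t'\cdot d)-(a_t'\cdot d).
\]
On the other hand $\mu_t(M)=(a_t\cdot d)/(b_t\cdot d)$, so at $t=t_0$, where by assumption $\mu_{t_0}(M)=t_0$ (equivalently $a_{t_0}\cdot d=t_0\,(b_{t_0}\cdot d)$), the quotient rule gives
\[
	\left.\frac{d\mu_t(M)}{dt}\right|_{t=t_0}
	=\frac{(a_{t_0}'\cdot d)(b_{t_0}\cdot d)-(a_{t_0}\cdot d)(b_{t_0}'\cdot d)}{(b_{t_0}\cdot d)^2}
	=\frac{(a_{t_0}'\cdot d)-t_0\,(b_{t_0}'\cdot d)}{b_{t_0}\cdot d}.
\]
Hence
\[
	\left.\frac{d\mu_t(M)}{dt}\right|_{t=t_0}-1
	=\frac{(a_{t_0}'\cdot d)-t_0\,(b_{t_0}'\cdot d)-(b_{t_0}\cdot d)}{b_{t_0}\cdot d}
	=\frac{-f'(t_0)}{b_{t_0}\cdot d},
\]
using the formula for $f'(t_0)$ above with $t=t_0$. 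Since $b_{t_0}\in(0,\infty)^n$ and $d=\undim M\in\NN^n$ is nonzero, the denominator $b_{t_0}\cdot d$ is strictly positive, so $\mu_t(M)'|_{t_0}-1$ and $-f'(t_0)$ have the same sign. This immediately yields both claims: transversality ($f'(t_0)\ne0$) is equivalent to $\mu_t(M)'|_{t_0}\ne1$, and the green/red dichotomy ($f'(t_0)>0$, resp.\ $<0$) corresponds to $\mu_t(M)'|_{t_0}-1<0$, resp.\ $>0$.

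There is essentially no obstacle here; the only point requiring a little care is making sure the hypothesis ``$\mu_{t_0}(M)=t_0$'' (part (2) of the definition of a semistable pair, which is built into $\gamma(t_0)\in D(M)$ via Lemma \ref{lem1} together with the relation $\gamma_Z(t)\in H(M)\iff t=\mu_t(M)$) is invoked at exactly the right moment so that the substitution $a_{t_0}\cdot d=t_0\,(b_{t_0}\cdot d)$ is legitimate — this is what makes the cross term $(a_{t_0}\cdot d)(b_{t_0}'\cdot d)$ collapse to $t_0\,(b_{t_0}\cdot d)(b_{t_0}'\cdot d)$ and produces the clean identity $f'(t_0)=-(b_{t_0}\cdot d)\bigl(\mu_t(M)'|_{t_0}-1\bigr)$.
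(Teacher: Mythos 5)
Your proof is correct and is essentially the same computation as in the paper: both observe that $v(t_0)\cdot\undim M$ equals $b_{t_0}\cdot\undim M>0$ times $1-\left.d\mu_t(M)/dt\right|_{t_0}$ after substituting $t_0=\mu_{t_0}(M)$, so the two quantities have the same sign. You have simply carried the substitution through to the explicit identity $f'(t_0)=-(b_{t_0}\cdot\undim M)\bigl(\left.\mu_t(M)'\right|_{t_0}-1\bigr)$, which the paper states more tersely.
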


\begin{proof}
The statement is that the following have the same sign for $\beta=\undim M$.
\begin{equation}\label{eq: first expression for green or red}
	1-\frac d{dt}\left.\mu_t(\beta)\right|_{t=t_0}=1-\frac{(a_{t_0}'\cdot\beta)(b_{t_0}\cdot\beta)-(a_{t_0}\cdot\beta)(b_{t_0}'\cdot\beta)}{(b_{t_0}\cdot\beta)^2}
\end{equation}
\begin{equation}\label{eq: second expression for green or red}
	v(t_0)\cdot \beta=\frac d{dt}\left.\gamma_Z(t)\cdot\beta\right|_{t=t_0}=b_{t_0}\cdot \beta+t_0b_{t_0}'\cdot\beta -a_{t_0}'\cdot\beta
\end{equation} 
{Substituting $t_0=\mu_{t_0}(M)=\frac{a_{t_0}\cdot\beta}{b_{t_0}\cdot\beta}$ in \eqref{eq: second expression for green or red}, {we see that this second expression is equal to}
\[
	{b_{t_0}\cdot\beta}+ \frac{a_{t_0}\cdot\beta}{b_{t_0}\cdot\beta}b_{t_0}'\cdot\beta-a_{t_0}'
	\cdot\beta={b_{t_0}\cdot\beta}+ 
	\frac{(a_{t_0}\cdot \beta)(b_{t_0}'\cdot\beta)}{b_{t_0}\cdot\beta}-
	\frac{(a_{t_0}'\cdot \beta)(b_{t_0}\cdot\beta)}{b_{t_0}\cdot\beta}
\]
 which is \eqref{eq: first expression for green or red} times $b_{t_0}\cdot\beta=b_{t_0}\cdot\undim M>0$. So, the two expressions have the same sign.}
\end{proof}

\begin{lem}\label{lem3}
The coordinates of $\gamma_Z(t)$ are all negative for $t<<0$ and they are all positive for $t>>0$.
\end{lem}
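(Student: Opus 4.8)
The plan is to read off the claim directly from the two defining properties of a nonlinear stability function, with the only input being the sign of the coordinates of $b_t$ together with the fact that $a_t,b_t$ eventually stabilize. First I would invoke property (2) of Definition \ref{def: nonlinear stability function}: since $a_t$ and $b_t$ have velocity $0$ for $|t|$ large, there is a constant $T>0$ so that $a_t=a_\infty$ and $b_t=b_\infty$ for all $t\ge T$, and $a_t=a_{-\infty}$, $b_t=b_{-\infty}$ for all $t\le -T$. By property (1) all coordinates $b_t^k$ are strictly positive for every $t$, so in particular $b_\infty^k>0$ and $b_{-\infty}^k>0$ for each $k=1,\dots,n$.

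Next I would compute the $k$th coordinate of $\gamma_Z(t)=tb_t-a_t$ in these two regimes. For $t\ge T$ the $k$th coordinate equals $t\,b_\infty^k-a_\infty^k$, which is an increasing affine function of $t$ with positive slope $b_\infty^k$; hence it is positive as soon as $t>a_\infty^k/b_\infty^k$. Taking $t>\max\{\,T,\ \max_k a_\infty^k/b_\infty^k\,\}$ makes all $n$ coordinates of $\gamma_Z(t)$ positive simultaneously, which is the $t\gg 0$ statement. Symmetrically, for $t\le -T$ the $k$th coordinate is $t\,b_{-\infty}^k-a_{-\infty}^k$, an affine function with positive slope, so it is negative once $t<a_{-\infty}^k/b_{-\infty}^k$; choosing $t<\min\{\,-T,\ \min_k a_{-\infty}^k/b_{-\infty}^k\,\}$ forces all coordinates negative, giving the $t\ll 0$ statement.

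There is essentially no obstacle here: the lemma is a bookkeeping consequence of the standing hypotheses, and the only point that deserves a sentence is that $b_\infty$ and $b_{-\infty}$ inherit positivity of all coordinates from the requirement $b_t\in(0,\infty)^n$, which guarantees the dominant linear term $tb_{\pm\infty}$ overwhelms the bounded term $a_{\pm\infty}$ with the correct sign in each coordinate. (Alternatively, one could avoid referencing the constants $a_{\pm\infty},b_{\pm\infty}$ altogether and argue that on the compact interval $[-T,T]$ the continuous functions $a_t,b_t$ are bounded, while $b_t^k\ge c>0$ uniformly, so $tb_t^k-a_t^k\to\pm\infty$ as $t\to\pm\infty$; but using property (2) is cleaner.) This completes the plan.
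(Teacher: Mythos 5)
Your proof is correct and takes essentially the same route as the paper, which simply observes that for $|t|$ large $\gamma_Z(t)=tb_t-a_t$ is dominated by $tb_t$, whose coordinates have the sign of $t$; you have merely spelled out the bookkeeping using the stabilization of $a_t,b_t$ from Definition \ref{def: nonlinear stability function}(2).
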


\begin{proof}
When $|t|$ is very large, $\gamma_Z(t)=tb_t-a_t$ is dominated by the term $tb_t$ whose coordinates are nonzero with the same sign as $t$.
\end{proof}

The important properties of $\gamma_Z$ are summarized by the following definition and theorem.

\begin{defn}\label{def: reddening path}
A \emph{reddening path} for $\cL=nmod\text-\nM$ or $mod\text-\Lambda$ is defined to be a $C^1$ path $\gamma:\mathbb R\to\mathbb R^n$ with the following two properties.
\begin{enumerate}
\item All coordinates of $\gamma(t)$ are negative for $t<<0$ and positive for $t>>0$.
\item Whenever $\gamma$ crosses a semistability set $D(M)$ for $M\in\cL$, the crossing is transverse, i.e., $D(M)$ has codimension 1 and the curve $\gamma$ is not tangent to the hyperplane $H(M)$.
\end{enumerate}
A reddening path will be called a \emph{green path} for $\Lambda$ if all crossings are green.
\end{defn}

\begin{thm}\label{thm: (a) iff (b)}
Let $Z_\bullet$ be a nonlinear stability function for $\cL$. Then $Z_\bullet$ is generic, resp. green, if and only if the corresponding path $\gamma_Z$ is a reddening path, resp. a green path. Conversely, for any reddening path $\gamma$ there is a nonlinear $Z_\bullet$ so that $\gamma(t)\in D(M)$ if and only if $M$ is $Z_t$-semistable, equivalently, if $\gamma_Z(t)\in D(M)$.
\end{thm}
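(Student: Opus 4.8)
The plan is to split the statement into two halves: first, the ``forward'' direction that a nonlinear stability function $Z_\bullet$ produces a path $\gamma_Z$ with the claimed properties; second, the ``converse'' that an arbitrary reddening/green path arises (up to the semistability correspondence) from some $Z_\bullet$.

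For the forward direction I would simply assemble the lemmas already in hand. Lemma \ref{lem3} gives property (1) of a reddening path for $\gamma_Z$ directly. For property (2) and the green/red dichotomy: suppose $\gamma_Z$ meets $D(M)$ at $t=t_0$. By Lemma \ref{lem1}(1) this means $M$ is $Z_{t_0}$-semistable, and since $\gamma_Z(t)\in H(M)$ forces $t=\mu_t(M)$, the pair $(M,t_0)$ is a $Z_\bullet$-semistable pair. Now Lemma \ref{lem2} says $\gamma_Z$ is transverse to $H(M)$ at $t_0$ exactly when $\frac{d}{dt}\mu_t(M)|_{t_0}\neq 1$, i.e., exactly when $(M,t_0)$ is generic in the sense of the definition, and the crossing is green precisely when $\frac{d}{dt}\mu_t(M)|_{t_0}<1$, i.e., when the pair is green. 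Hence: every $Z_\bullet$-semistable pair is generic (resp. green) if and only if every crossing of $\gamma_Z$ with a wall $D(M)$ is transverse (resp. green). This is the asserted equivalence. One point to be careful about: I must note that \emph{every} wall $D(M)$ that $\gamma_Z$ crosses is of this form, so that ``all crossings transverse'' really is the same condition as ``$Z_\bullet$ generic''; this is immediate because $\gamma_Z(t_0)\in D(M)$ already implies, via Lemma \ref{lem1}, that $(M,t_0)$ is a semistable pair.

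For the converse, given a reddening path $\gamma:\RR\to\RR^n$, the task is to manufacture $a_t\in\RR^n$ and $b_t\in(0,\infty)^n$, $C^1$ in $t$ and eventually constant in $t$, such that $\gamma(t)=tb_t-a_t$, with $0<\theta_t(x)<\pi$ for all $x\in\NN^n\setminus 0$. The natural move is to \emph{prescribe} $b_t$ first. By property (1) of a reddening path, $\gamma(t)$ has all coordinates negative for $t\ll 0$ and positive for $t\gg 0$, so for a suitable smooth positive vector function $b_t$ one can arrange that $tb_t$ dominates in the right way; concretely, pick a fixed $b_\infty,b_{-\infty}\in(0,\infty)^n$ and interpolate, then set $a_t:=tb_t-\gamma(t)$, which is automatically $C^1$ and eventually of the form $tb_\pm-\gamma_\pm$ — but this is not yet affine in $t$, so I instead want $b_t$ chosen so that $a_t$ has zero velocity for $|t|$ large; since $\gamma$ itself need not be eventually affine, one cannot literally make $a_t$ constant, and the cleanest fix is to only require $b_t,a_t$ to be eventually constant \emph{as the definition demands}, which forces a mild reparametrization or a modification of $\gamma$ outside a compact interval that does not change which walls are crossed (all walls are crossed inside a compact $t$-interval, because for $|t|$ large $\gamma(t)$ lies in the strictly positive or strictly negative orthant, where no $D(M)$ can meet it). After replacing $\gamma$ by such an eventually-affine curve with the same wall-crossings, set $b_t$ to interpolate smoothly and positively between two constants and define $a_t=tb_t-\gamma(t)$; then $\gamma=\gamma_Z$ by construction, and Lemma \ref{lem1} gives $\gamma(t)\in D(M)\iff M$ is $Z_t$-semistable. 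Finally one checks the angle condition $0<\theta_t(x)<\pi$: this says $b_t\cdot x>0$ for all $x\in\NN^n\setminus 0$, which holds because $b_t$ has strictly positive coordinates.

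The main obstacle is the bookkeeping in the converse: reconciling ``$\gamma$ is an arbitrary $C^1$ path, only eventually in the positive/negative orthants'' with ``$a_t,b_t$ have velocity $0$ for $|t|$ large.'' The honest argument is that wall-crossings occur only in a compact window, so one is free to alter $\gamma$ outside that window — e.g.\ splice it to a linear path $tb_\pm - a_\pm$ heading into the correct orthant — without affecting the set of crossings or their colors, and then the decomposition $\gamma(t)=tb_t-a_t$ with the required asymptotics can be read off with $b_t$ a smooth positive interpolation. I would state this compactness observation as a small lemma (no wall $D(M)$ meets the open positive or negative orthant, since $x\cdot\undim M=0$ with $x$ strictly positive and $\undim M\in\NN^n\setminus 0$ is impossible) and then the rest is routine. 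The green/generic bookkeeping transfers verbatim from Lemma \ref{lem2}.
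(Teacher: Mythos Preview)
Your proposal is correct and follows essentially the same approach as the paper: the forward direction is exactly the assembly of Lemmas \ref{lem1}, \ref{lem2}, \ref{lem3} that the paper invokes, and for the converse both you and the paper observe that no $D(M)$ meets the open positive or negative orthant, so the tails of $\gamma$ may be modified (to an eventually-affine path) outside a compact window without disturbing any wall crossing, after which $\gamma=\gamma_Z$ for a suitable $Z_\bullet$. The only cosmetic difference is that the paper is terser and implicitly takes $b_t$ constant rather than interpolating, but the argument is the same.
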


\begin{proof}
The relation between $Z_\bullet$ and $\gamma_Z$ is proved in Lemma \ref{lem2}.

To prove the second statement, let $\gamma$ be any reddening path. Since each $H(M)$ and thus each $D(M)$ contains no points whose coordinates are all positive or all negative, the two tails of an arbitrary reddening path $\gamma$ can be modified to be stationary so that $\gamma$ becomes equal to some $\gamma_Z$ without changing when, where or with what velocity it meets any semistability set $D(M)$ for $M\in mod$-$\Lambda$. The second statement follows.
\end{proof}


\subsection{Wide subcategory}\label{ss: wide subcategory proof}

Recall that a \emph{wide subcategory} of an abelian category $\cA$ is a full subcategory $\cW$ which is closed under direct summands, extensions, kernels and cokernels. We observe that, if $\cB$ is an exactly embedded abelian full subcategory of $\cA$, then $\cW\cap \cB$ is a wide subcategory of $\cB$.


\begin{defn}
For any subset $S$ of $\mathbb R^n$ let $\cW(S)$ be the full subcategory of $\cL=nmod\text-\nM$ or $mod\text-\Lambda$ of all objects $M$ so that $S\subseteq D(M)$.
\end{defn}

It is well-known and easy to see that $\cW(S)$ is a \emph{wide subcategory} of $\cL$. We review the proof in our setting using  equivalent statements about the sets $D(M)$.

\begin{lem}\label{lem: W(S) is closed under ext, ker, coker}
For any $S\subseteq\mathbb R^n$, $\cW(S)$ is closed under extensions, cokernels of monomorphisms, kernels of epimorphisms. Equivalently,
\[
	D(A)\cap D(B)\subseteq D(E)
\]
\[
	D(A)\cap D(E)\subseteq D(B)
\]
\[
	D(E)\cap D(B)\subseteq D(A)
\]
for any short exact sequence $A\to E\to B$.
\end{lem}

\begin{proof}
By definition of $\cW(S)$, $A,B\in\cW(S)$ iff $S\subseteq D(A)\cap D(B)$. We want to prove that $E\in \cW(S)$ which is equivalent to $S\subseteq D(E)$. So, the two formulations of first statement in the lemma are equivalent. We prove the second. Let $x\in D(A)\cap D(B)$. We want to show that $x\in D(E)$. The first step is easy:
\[
	x\cdot\undim E=x\cdot\undim A+x\cdot \undim B=0
\]
For any $C\subseteq E$, we have a short exact sequence $A\cap C\to C\to D\subseteq B$. So,
\[
	x\cdot\undim C=x\cdot \undim (A\cap C) + x\cdot\undim D\le 0.
\]
So, $x\in D(E)$ proving that $\cW(S)$ is closed under extensions. The proofs of the other two statements are similar.
\end{proof}

\begin{lem}\label{lem: W(S) is closed under images}
Let $f:A\to B$ be a morphism in $\cL$ with image $C$. Then $D(A)\cap D(B)\subseteq D(C)$. Equivalently, $\cW(S)$ is closed under images.
\end{lem}

\begin{proof}
Let $x\in D(A)\cap D(B)$. Since $C\subseteq B$ we have $x\cdot \undim C'\le 0$ for any $C'\subseteq C$. Since $C$ is a quotient module of $A$ we have $x\cdot \undim C\ge 0$. Therefore $x\in D(C)$.
\end{proof}

\begin{lem}\label{lem: W(S) is closed under summands}
$D(A\oplus B)=D(A)\cap D(B)$. So, $\cW(S)$ is closed under summands.
\end{lem}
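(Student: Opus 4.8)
The plan is to deduce the equality from the two one-line observations that $A$ and $B$ sit inside $A\oplus B$ both as submodules and as quotients. First note that one inclusion is already available: applying Lemma~\ref{lem: W(S) is closed under ext, ker, coker} to the split short exact sequence $A\to A\oplus B\to B$ gives $D(A)\cap D(B)\subseteq D(A\oplus B)$, so only the reverse inclusion requires work.

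For the reverse inclusion I would take $x\in D(A\oplus B)$ and argue as follows. Since $A$ and $B$ are each submodules of $A\oplus B$, the defining inequalities of $D(A\oplus B)$ give $x\cdot\undim A\le 0$ and $x\cdot\undim B\le 0$. On the other hand $x\cdot\undim A+x\cdot\undim B=x\cdot\undim(A\oplus B)=0$ because $x\in H(A\oplus B)$. Hence both terms vanish, so $x\in H(A)$ and $x\in H(B)$. Finally, any submodule $A'\subseteq A$ is also a submodule of $A\oplus B$, so $x\cdot\undim A'\le 0$, which shows $x\in D(A)$; symmetrically $x\in D(B)$. Thus $D(A\oplus B)\subseteq D(A)\cap D(B)$, and combined with the first inclusion we get equality.

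For the statement about $\cW(S)$: by definition $A\oplus B\in\cW(S)$ iff $S\subseteq D(A\oplus B)=D(A)\cap D(B)$, which is equivalent to $S\subseteq D(A)$ and $S\subseteq D(B)$, i.e.\ to $A\in\cW(S)$ and $B\in\cW(S)$; in particular $\cW(S)$ is closed under direct summands (and under finite direct sums). There is no genuine obstacle in this argument; the only point to be careful about is the sign bookkeeping — the submodule conditions only give $\le 0$ on each summand, and it is precisely the constraint $x\in H(A\oplus B)$ that squeezes these into equalities, putting $x$ on the hyperplanes $H(A)$ and $H(B)$ individually.
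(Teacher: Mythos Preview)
Your argument is correct. Both you and the paper obtain the inclusion $D(A)\cap D(B)\subseteq D(A\oplus B)$ from Lemma~\ref{lem: W(S) is closed under ext, ker, coker} applied to the split exact sequence. For the reverse inclusion, however, the paper takes a different route: it invokes Lemma~\ref{lem: W(S) is closed under images}, observing that $A$ (respectively $B$) is the image of the idempotent endomorphism of $A\oplus B$ given by projection onto the summand, so that $D(A\oplus B)\subseteq D(A)$ immediately.

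Your approach is more self-contained: you bypass the image lemma and argue directly from the definition of $D(-)$, using only that $A,B\subseteq A\oplus B$ and the squeeze $x\cdot\undim A + x\cdot\undim B = 0$. This has the minor advantage of not depending on Lemma~\ref{lem: W(S) is closed under images}, so in principle the logical order of the lemmas could be rearranged. The paper's route is slightly slicker in that it reduces the summand statement to the image statement already proved, emphasizing the categorical closure properties rather than a hands-on inequality computation. Both are equally valid and essentially the same length.
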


\begin{proof}
By Lemma \ref{lem: W(S) is closed under ext, ker, coker}, $D(A)\cap D(B)\subseteq D(A\oplus B)$. By Lemma \ref{lem: W(S) is closed under images}, $D(A\oplus B)\subseteq D(A)$ (and $D(A\oplus B)\subseteq D(B)$) since $A$ is the image of an endomorphism of $A\oplus B$.
\end{proof}

\begin{thm}
For any $S\subseteq \mathbb R^n$, $\cW(S)$ is a wide subcategory of $\cL$.
\end{thm}

\begin{proof}
Any summand of $M\in \cW(S)$ lies in $\cW(S)$ by Lemma \ref{lem: W(S) is closed under summands}. $\cW(S)$ is closed under extensions by Lemma \ref{lem: W(S) is closed under ext, ker, coker}. Given any morphism $f:A\to B$ where $A,B\in \cW(S)$, the exact sequences $\ker f\hookrightarrow A\twoheadrightarrow \text{im}\,f$ and $\text{im}\,f\hookrightarrow B\twoheadrightarrow\coker f$ show that $\ker f,\coker f\in \cW(S)$ using Lemmas \ref{lem: W(S) is closed under ext, ker, coker} since $\text{im}\,f\in \cW(S)$ by Lemma \ref{lem: W(S) is closed under images}.
\end{proof}


\subsection{Harder-Harasimhan filtration}\label{ss: HN filtration}

We now come to the purpose of the sign condition \eqref{eq: when (b,t) is Z-green} on the nonlinear stability function $Z_\bullet$. Namely, this condition will imply that the walls $D(M)$ crossed by the corresponding green path $\gamma_Z$ give a Harder-Narasimhan filtration for any representation. The detailed proof that follows is based on the half-page proof in \cite{R}. The proof in \cite{B}, Proposition 2.4, is also very short and elegant. The original proof comes from \cite{Rudakov}. For just the idea of the proof, the reader should go to these original sources.

\begin{lem}\label{lem: stable means in the wide subcat}
{Given a nonlinear stability function $Z=Z_\bullet$ and $t_0\in\mathbb R$ let $\cW_Z(t_0)$ be the full subcategory of $\cL=nmod\text-\nM$ or $mod\text-\Lambda$ consisting of all objects $M\in\cL$ so that $(M,t_0)$ is a $Z$-semistable pair. Then $\cW_Z(t_0)$ is a wide subcategory of $\cL$.}
\end{lem}

\begin{proof} By Theorem \ref{thm: (a) iff (b)}, the pair $(W,t_0)$ is $Z$-semistable if and only if $Z_{t_0}(t_0)\in D(M)$. Therefore $\cW_Z(t_0)=\cW(Z_{t_0}(t_0))$ is a wide subcategory.
\end{proof}

{Given a nonlinear stability function $Z_\bullet$ which is green, we will show that the family of wide subcategories $\cW_Z(t), t\in \mathbb R$ gives an HN-stratification of $\cL$ as defined below.

\begin{defn}\label{def: HN-stratification}
A \emph{Harder-Narasimhan (HN)-stratification} of $\cL$ (also known as an \emph{HN-system}) is defined to be any family of wide subcategories $\{\cW_t\},t\in\mathbb R$, in $\cL$ satisfying the condition that, for any object $M$, there exists a unique finite sequence $t_1<t_2<\cdots<t_m$ and a unique filtration
\[
	0=M_0\subsetneq M_1\subsetneq \cdots\subsetneq M_m=M
\]
having the property that $M_k/M_{k-1}\in\cW_{t_k}$ for all $k$. This filtration is called the \emph{Harder-Narasimhan (HN)-filtration} of $M$.
\end{defn}

\begin{rem}\label{rem: comparing HS-stratifications}
We note that, given an HN-stratification $\{\cW_t\}$ of $nmod\text-\nM$, we obtain an HN-stratification $\{\cW_t'\}$ of $mod\text-\Lambda$ for any $\Lambda=T\cM/J$ where $\cW_t'=\cW_t\cap mod\text-\Lambda$, the full subcategory of $\cW_t$ with objects $M$ which are annihilated by $J$.
\end{rem}

The concept of an HN-stratification comes from \cite{B} where it is called a ``slicing'' of the category. The concept of HN-filtration comes from \cite{HN}.}

\begin{defn}\label{def: t0 and t1}
For any object $M\in\cL$, the set of values of $\mu_t(M)$ (defined in \ref{eq: def of slope}) is bounded. So, $t>\mu_t(M)$ for $t>>0$ and $t<\mu_t(M)$ for $t<<0$. So, the set of all $t\in\mathbb R$ for which $\mu_t(M)=t$ is closed, bounded and nonempty. Let $t_0(M)$ be the smallest element of this set. Since $t_0(M)$ depends only on the dimension vector of $M$, there are only finitely many values of $t_0(M')$ for all submodules $M'\subseteq M$. Let $t_1(M)$ be the smallest value of $t_0(M')$ for all nonzero $M'\subseteq M$.
\end{defn}

\begin{lem}\label{lem: when sigma is below t}
If $\mu_t(M)<t$ then $t_0(M)<t$.
\end{lem}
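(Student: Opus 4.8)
The statement is essentially an application of the intermediate value theorem to the function that measures the difference between $t$ and the slope $\mu_t(M)$. First I would set $g(s):=s-\mu_s(M)$ for $s\in\RR$. Since $b_s\in(0,\infty)^n$ and $\undim M\in\NN^n$ is nonzero, the denominator $b_s\cdot\undim M$ is strictly positive, so $\mu_s(M)=\tfrac{a_s\cdot\undim M}{b_s\cdot\undim M}$ is a $C^1$ function of $s$; in particular $g$ is continuous. By the boundedness of $\{\mu_s(M):s\in\RR\}$ recalled in Definition \ref{def: t0 and t1}, we have $g(s)<0$ for $s\ll 0$ (there $\mu_s(M)>s$) and $g(s)>0$ for $s\gg 0$.

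Now suppose $\mu_t(M)<t$, i.e.\ $g(t)>0$. Choose $N<t$ so negative that $g(N)<0$. Applying the intermediate value theorem to the continuous function $g$ on the interval $[N,t]$ produces some $s$ with $N<s<t$ and $g(s)=0$, that is, $\mu_s(M)=s$. Hence $s$ belongs to the closed bounded nonempty set $\{u\in\RR:\mu_u(M)=u\}$ whose least element is $t_0(M)$, so $t_0(M)\le s<t$, which is the claim.

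\textbf{Main obstacle.} There is essentially no difficulty here beyond bookkeeping: the only points to be careful about are that $\mu_\bullet(M)$ is genuinely continuous (which follows from the $C^1$ hypothesis on $a_t,b_t$ together with positivity of $b_t$), and that the set defining $t_0(M)$ is the one for which the intermediate value theorem supplies a point strictly below $t$. Everything else is immediate from Definition \ref{def: t0 and t1}.
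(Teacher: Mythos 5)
Your proof is correct and is exactly the paper's argument, made explicit: the paper also invokes the fact that $\mu_{t'}(M) > t'$ for $t' \ll 0$ together with $\mu_t(M) < t$ and (implicitly) the intermediate value theorem to produce $t'' < t$ with $\mu_{t''}(M) = t''$, whence $t_0(M) \le t'' < t$.
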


\begin{proof}
Since $\mu_{t'}(M)>t'$ for $t'<<0$, $\exists t''<t$ so that $\mu_{t''}(M)=t''\ge t_0(M)$.
\end{proof}

\begin{lem}\label{lem: sigma is above t}
For all $M'\subseteq M$ and $t<t_1(M)$ we have
\[
	\mu_t(M')>t.
\]
By continuity, we also have $\mu_{t_1(M)}(M')\ge t_1(M)$.
\end{lem}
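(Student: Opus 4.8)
The plan is to reduce the statement to a claim about a single submodule and then invoke the definition of $t_1(M)$ as a minimum. Specifically, I would first isolate the following: for any nonzero submodule $M'\subseteq M$ and any $t<t_0(M')$, one has $\mu_t(M')>t$. Granting this, the lemma is immediate: by Definition \ref{def: t0 and t1}, $t_1(M)=\min t_0(M')$ over all nonzero $M'\subseteq M$, so $t_1(M)\le t_0(M')$ for every such $M'$; hence $t<t_1(M)$ forces $t<t_0(M')$ and therefore $\mu_t(M')>t$.

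To prove the isolated claim, I would argue by contradiction: suppose $t<t_0(M')$ but $\mu_t(M')\le t$. There are two cases. If $\mu_t(M')<t$, then Lemma \ref{lem: when sigma is below t} (applied to $M'$ in place of $M$) gives $t_0(M')<t$, contradicting $t<t_0(M')$. If $\mu_t(M')=t$, then $t$ itself lies in the set $\{s\in\RR\mid \mu_s(M')=s\}$, whose smallest element is $t_0(M')$ by definition; this yields $t_0(M')\le t$, again a contradiction. So no such $t$ exists, proving $\mu_t(M')>t$ for all $t<t_0(M')$.

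For the final ``by continuity'' sentence, I would note that $t\mapsto \mu_t(M')=\frac{a_t\cdot\undim M'}{b_t\cdot\undim M'}$ is continuous (indeed $C^1$) in $t$, since $a_t,b_t$ are $C^1$ and the denominator $b_t\cdot\undim M'$ is strictly positive for all $t$ by condition (1) of Definition \ref{def: nonlinear stability function}. Thus the strict inequality $\mu_t(M')>t$ valid for all $t<t_1(M)$ passes to the weak inequality $\mu_{t_1(M)}(M')\ge t_1(M)$ in the limit $t\to t_1(M)^-$.

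There is no serious obstacle here; the only point requiring care is bookkeeping. First, $M'$ should range over \emph{nonzero} submodules so that $\mu_t(M')$ is defined; the case $M'=0$ is vacuous. Second, one must handle the borderline equality $\mu_t(M')=t$ separately from the strict inequality $\mu_t(M')<t$, since only the latter is covered by Lemma \ref{lem: when sigma is below t} while the former is excluded directly by the minimality built into the definition of $t_0(M')$. I would make sure the write-up keeps these two cases distinct.
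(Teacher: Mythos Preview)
Your proposal is correct and follows essentially the same approach as the paper: argue by contradiction that $\mu_t(M')\le t$ forces a fixed point $t'\le t$ of $s\mapsto\mu_s(M')$, hence $t_0(M')\le t$ and so $t_1(M)\le t$. The only difference is cosmetic: the paper handles both cases $\mu_t(M')<t$ and $\mu_t(M')=t$ in one line by appealing directly to boundedness of $\mu_\bullet(M')$ and the intermediate value theorem, whereas you split the two cases and invoke Lemma~\ref{lem: when sigma is below t} for the strict one.
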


\begin{proof}
If $\mu_t(M')\le t$ then, since $|\mu_t(M')|$ is bounded, there must be some $t'\le t$ so that $\mu_{t'}(M')=t'$. Then $t_1(M)\le t'\le t$ by definition of $t_1(M)$, proving the lemma.
\end{proof}

\begin{prop}\label{prop: when is M Z-semistable}
Suppose $Z_\bullet$ is green for $\cL$ and $M\in \cL$ is $Z_t$-semistable for some $t\in \mathbb R$. Then 
\[
	t=t_0(M)=t_1(M)
\]
Conversely, if $t_0(M)=t_1(M)$, then $M$ is $Z_{t_0}$-semistable for $t_0=t_0(M)$.
\end{prop}

\begin{proof}
Suppose $t_0=t_0(M)=t_1(M)$. Then $\mu_{t_0}(M)=t_0$ and, by Lemma \ref{lem: sigma is above t} above, $\mu_{t_0}(M')\ge t_0$ for all $M'\subseteq M$. So, $M$ is $Z_{t_0}$-semistable.

Conversely, suppose that $M$ is $Z_t$-semistable and $Z_\bullet$ is green. Then \[
t\ge t_0(M)\ge t_1(M).\]
So, it suffices to show that $t\le t_1(M)$. To prove this, suppose not. Then $
t>t_1=t_1(M)$.
By definition of $t_1(M)$ there is $0\neq M'\subseteq M$ so that $\mu_{t_1}(M')=t_1<t$. 

Let $M'\subseteq M$ be minimal with the property that $
	\mu_{t'}(M')=t'$ for some $t'<t$. Taking $t'$ minimal we may assume $t'=t_0(M')<t$. By minimality of $M'$ we have $t_0(M'')\ge t>t'$ for all $M''\subsetneq M'$. By Lemma \ref{lem: sigma is above t} this implies $\mu_{t'}(M'')> t'$. So, $M'$ is $Z_{t'}$-stable. Since $Z_\bullet$ is \emph{green for $\Lambda$}, any $t''>t'$ sufficiently close to $t'$ has the property that $\mu_{t''}(M')<t''$. Since $t'<t$, we can also take $t''<t$. But $\mu_t(M'')\ge t$ by the assumption that $M$ is $Z_t$-semistable. By the intermediate value theorem in calculus, there must be a point $t_\ast$ with $t''<t_\ast\le t$ so that $\mu_{t_\ast}(M'')=t_\ast$. Taking $t_\ast$ minimal we must have:
\[
	\left.\frac d{dt}\mu_t(M'')\right|_{t=t_\ast}\ge1
\]
So, $M''$ cannot be $Z_{t_\ast}$-semistable. So, there is some $M_\ast\subsetneq M''$ so that $\mu_{t_\ast}(M_\ast)<t_\ast$. By Lemma \ref{lem: when sigma is below t} there is some $t_\ast'<t_\ast$ so that $\mu_{t_\ast'}(M_\ast)=t_\ast'<t_\ast<t$ contradicting the minimality of $M'$. So, $t\le t_0\le t_1\le t$, showing that the three numbers are equal.
\end{proof}

\begin{rem}
For $Z_\bullet$ green, we say $M$ is \emph{$Z_\bullet$-stable/semistable} if $M$ is $Z_t$-stable/semistable for some $t$. Proposition \ref{prop: when is M Z-semistable} implies that the value of $t$ is uniquely determined. It also implies that $M$ is $Z_\bullet$-semistable if and only if $t_0(M)=t_1(M)$.
\end{rem}

\begin{lem}\label{lem: M1 is unique}
Let $M_1\subseteq M$ be maximal so that $t_0(M_1)=t_1(M)$. Then $M_1$ contains all $M'\subseteq M$ having $t_0(M')=t_1(M)$. In particular, $M_1$ is unique.
\end{lem}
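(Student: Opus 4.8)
The plan is to prove that the collection $\mathcal{F}$ of all nonzero submodules $M'\subseteq M$ with $t_0(M')=t_1(M)$ is closed under sums. Since $M$ has finite length, $\mathcal{F}$ then has a largest element, namely the (finite) sum of all its members, which again lies in $\mathcal{F}$; and for any maximal $M_1\in\mathcal{F}$ and any $M'\in\mathcal{F}$ the submodule $M_1+M'$ lies in $\mathcal{F}$ and contains $M_1$, so $M'\subseteq M_1$ by maximality of $M_1$, which is the assertion of the lemma, uniqueness of $M_1$ then being immediate. Write $s:=t_1(M)$; note $\mathcal{F}\neq\emptyset$ since $t_1(M)$ was defined as a minimum of values $t_0(M')$.

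The first step is to observe that each $A\in\mathcal{F}$ lies in the wide subcategory $\cS_Z(s)$. Every nonzero submodule of $A$ is a nonzero submodule of $M$, so $t_1(A)\ge t_1(M)=s$, while $t_1(A)\le t_0(A)=s$ because $A$ is among the submodules tested in the definition of $t_1(A)$. Hence $t_0(A)=t_1(A)=s$, and Proposition~\ref{prop: when is M Z-semistable} gives that $A$ is $Z_s$-semistable with $\mu_s(A)=s$, i.e.\ $A\in\cS_Z(s)$; the same applies to $B$.

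The heart of the argument, and where I expect the only genuine work, is to show $A+B\in\mathcal{F}$. Since $\cS_Z(s)$ is wide it contains $A\oplus B$, and $A\oplus B\in\cS_Z(-\infty,s]$; as $Z_\bullet$ is green, $\cS_Z(-\infty,s]$ is a torsion class, hence closed under quotient modules, so the quotient $A+B$ of $A\oplus B$ lies in $\cS_Z(-\infty,s]$. For the reverse side I would argue by contradiction using the torsion pair $\bigl(\cS_Z(-\infty,s),\cS_Z[s,\infty)\bigr)$: if $A+B\notin\cS_Z[s,\infty)$, then $A+B$ has a nonzero submodule $N\in\cS_Z(-\infty,s)$. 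By Theorem~\ref{thm: HN filtration reworded} the first stratum $N_1\neq0$ of the HN filtration of $N$ is $Z_{t'}$-semistable for some $t'<s$, so $t_0(N_1)=t'$ by Proposition~\ref{prop: when is M Z-semistable}; but $N_1\subseteq N\subseteq M$ is a nonzero submodule of $M$, forcing $s=t_1(M)\le t_0(N_1)=t'<s$, a contradiction. Therefore $A+B\in\cS_Z(-\infty,s]\cap\cS_Z[s,\infty)=\cS_Z(s)$, and Proposition~\ref{prop: when is M Z-semistable} again gives $t_0(A+B)=t_1(A+B)=s$, i.e.\ $A+B\in\mathcal{F}$. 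The main obstacle is thus precisely this closure under sums, and inside it the step that $A+B$ has no nonzero submodule with HN data below $s$; everything else is formal bookkeeping with the torsion‑pair corollary, the wideness lemma, and the elementary inequalities relating $t_0$ and $t_1$.
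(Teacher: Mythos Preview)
Your argument is circular. You invoke Theorem~\ref{thm: HN filtration reworded} (for the HN filtration of your auxiliary $N$) and the torsion-pair Corollary derived from it (to get $\cS_Z(-\infty,s]$ closed under quotients and $\cS_Z[s,\infty)$ a torsion-free class), but in the paper Theorem~\ref{thm: HN filtration reworded} is explicitly just a restatement of Theorem~\ref{thm: HN filtration}, and the uniqueness half of the proof of Theorem~\ref{thm: HN filtration} uses Lemma~\ref{lem: M1 is unique} itself (``By Lemma~\ref{lem: M1 is unique}, $M_1'\subseteq M_1$''). So neither the HN filtration nor the torsion pairs are available to you at this point in the logical order; the whole machinery you are leaning on for the closure-under-sums step rests on the lemma you are proving.

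The paper's proof bypasses all of this with a direct slope computation at the single time $t_1:=t_1(M)$. If some $M_1'$ with $t_0(M_1')=t_1$ were not contained in $M_1$, then $M_1+M_1'\supsetneq M_1$, so maximality of $M_1$ (together with $t_0(M_1+M_1')\ge t_1(M)$) forces $t_0(M_1+M_1')>t_1$ and hence $\mu_{t_1}(M_1+M_1')>t_1$. Since $\mu_{t_1}(M_1)=\mu_{t_1}(M_1')=t_1$, the see-saw property of the slope on the short exact sequences $M_1\hookrightarrow M_1+M_1'\twoheadrightarrow M_1'/(M_1\cap M_1')$ and $M_1\cap M_1'\hookrightarrow M_1'\twoheadrightarrow M_1'/(M_1\cap M_1')$ gives $\mu_{t_1}(M_1\cap M_1')<t_1$, contradicting Lemma~\ref{lem: sigma is above t}. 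This uses only the definitions of $t_0,t_1$ and elementary slope additivity; it does not even need $Z_\bullet$ to be green, whereas your very first step (identifying $\mathcal F$ with objects of $\cS_Z(s)$ via Proposition~\ref{prop: when is M Z-semistable}) already requires the green hypothesis.
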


\begin{proof}
Suppose $M_1$ does not have this property. Then, there is $M_1'\subsetneq M$ with $t_0(M_1')=t_1$ so that $M_1,M_1'$ do not contain each other. Also, $M_1+M_1'$ properly contains $M_1$. So, $t_0(M_1+M_1')>t_1$ by maximality of $M_1$. This implies $\mu_{t_1}(M_1+M_1')>t_1$. Since $\mu_{t_1}(M_1)=t_1$ we must have 
\[
	\mu_{t_1}\left(\frac{M_1+M_1'}{M_1}\right)=\mu_{t_1}\left(\frac{M_1'}{M_1\cap M_1'}\right)>t_1.
\]
Since $\mu_{t_1}(M_1')=t_1$ we conclude that $\mu_{t_1}(M_1\cap M_1')<t_1$ contradicting the definition of $t_1=t_1(M)$. This proves the lemma.
\end{proof}

\begin{lem}\label{lem: t2>t1}
{Let $Z_\bullet$ be green for $\cL$, let $M\in\cL$ and} let $M_1\subseteq M$ be the maximal submodule so that $t_0(M_1)=t_1=t_1(M)$. If $M_1\neq M$ then $t_1(M/M_1)>t_1$.
\end{lem}

\begin{proof}
Suppose not. Then the set
\[
	S=\{s\le t_1\,|\, \mu_s(M'/M_1)=s \text{ for some }M_1\subsetneq M'\subseteq M\}
\]
is closed and nonempty. Also, $t_1\notin S$ since that would imply $\mu_{t_1}(M')=t_1$ contradicting the maximality of $M_1$. Let $m<t_1$ be the maximal element of $S$ and let $M''\supsetneq M_1$ be minimal so that $\mu_m(M'/M_1)=m$.\vskip.2cm

\underline{Claim}: $M'/M_1$ is $Z_m$-semistable.\vskip.2cm

Proof: Suppose not. Then there exists $M_1\subsetneq M''\subsetneq M'$ so that $\mu_m(M''/M_1)<m$. But, as we observed in the proof of Lemma \ref{lem: M1 is unique}, $\mu_{t_1}(M''/M_1)>t_1$ by maximality of $M_1$. This implies that $\mu_s(M''/M_1)=s$ for some $m<s<t_1$ contradicting the maximality of $m$. So, the Claim holds.\vskip.2cm

Since $Z_\bullet$ is green, we have $\mu_s(M'/M_1)<s$ for $s>m$ close to $m$, in particular, for some $m<s<t_1$. But $\mu_{t_1}(M'/M_1)>t_1$. This leads to the same contradiction as in the proof of the Claim above. Therefore, the Lemma holds.
\end{proof}

We are now ready to prove the main theorem about green nonlinear stability functions.

\begin{thm}[HN-filtration]\label{thm: HN filtration}
Let $Z_\bullet$ be a nonlinear stability function for {$\cL=nmod\text-\nM$ or $mod\text-\Lambda$ which is green and let $M\in \cL$.} Then there exist a unique sequence of real numbers $t_1<t_2<\cdots<t_m$ and a unique filtration
\[
	0=M_0\subsetneq M_1\subsetneq M_2\subsetneq\cdots\subsetneq M_m=M
\]
with the property that $M_i/M_{i-1}$ is $Z_{t_i}$-semistable for $i=1,2,\cdots,m$. Furthermore, $t_1=t_1(M)$ as given in Definition \ref{def: t0 and t1}.
\end{thm}

By Lemma \ref{lem: stable means in the wide subcat}, Theorem \ref{thm: HN filtration} is equivalent to the statement that $\cW_Z(t),t\in\mathbb R$, is an HN-stratification of $mod\text-\Lambda$. The sequence of submodules $M_1,M_2,\cdots,M_m$ will be called the \emph{Harder-Narasimhan (HN) filtration} of $M$ with respect to $Z_\bullet$.

\begin{proof} We can assume that $\cL=mod\text-\Lambda$ since, in the case $\cL=nmod\text-\nM$, we can take $\cL=mod\text-\Lambda$ where $\Lambda=T\nM/J$, $J$ begin the annihilator of $M$. Given that $Z_\bullet$ satisfies the required stability conditions for all nilpotent representations of $\nM$, a fortiori, it satisfies these conditions for all $\Lambda$-modules. So, $Z_\bullet$ is a green nonlinear stability function for $mod\text-\Lambda$.

(Existence) We first show the existence of the sequence $(M_1,t_1),\cdots,(M_m,t_m)$ by induction on the size of $M\in mod$-$\Lambda$. If $M$ is simple then we let $t_1=t_0(M)=t_1(M)$. Then $M_1=M$ is $Z_{t_1}$-stable and we are done.

If $M$ is not simple, let $t_1=t_1(M)$ be as given in Definition \ref{def: t0 and t1}. Let $M_1$ be the unique largest submodule of $M$ with $t_0(M_1)=t_1(M)$. Then $M_1$ is $Z_{t_1}$-semistable by Proposition \ref{prop: when is M Z-semistable}. By induction on the size of $M$, the module $M/M_1$ has a unique HN-filtration
\[
	0=M_1/M_1\subsetneq M_2/M_1\subsetneq \cdots\subsetneq M_m/M_1=M/M_1
\]
and there are $t_2<t_3<\cdots<t_m$ so that $(M_k/M_1)/(M_{k-1}/M_1)\cong M_k/M_{k-1}$ is $Z_{t_k}$-semistable for $k=2,3,\cdots,m$. Furthermore, $t_2=t_1(M/M_1)$ by induction and this is $>t_1$ by Lemma \ref{lem: t2>t1}. So, $M_1,M_2,\cdots,M_m$ is an HN-filtration of $M$.

(Uniqueness) If $(M_1',t_1')$ is the beginning of another HN-filtration of $M$ then the key step is to show that $t_1'=t_1$. This follows from the fact that
\[
	M_1/(M_1\cap M_1')\cong (M_1+M_1')/M_1'.
\]
Since the theorem holds for $M/M_1'$, we have $t_2'=t_1(M/M_1')$ and:
\[
	t_1\ge t_0(M_1/(M_1\cap M_1'))=t_0((M_1+M_1')/M_1')\ge t_1(M/M_1')=t_2'>t_1'
\]
By definition of $t_1(M)$ we have $t_1\le t_1'$. So, $t_1'=t_1$.

Next we show that $M_1'=M_1$. By Lemma \ref{lem: M1 is unique}, $M_1'\subseteq M_1$. If they are not equal then $M_1/M_1'$ is nonzero with $\mu_{t_1}(M_1/M_1')=t_1$. This contradicts the induction hypothesis that $t_2'=t_1(M/M_1')>t_1'=t_1$. So, $M_1'=M_1$. The rest of the filtration is the unique filtration of $M/M_1$. So, the HN-filtration of $M$ is unique.
\end{proof}

Theorem \ref{thm: HN filtration} has many well-known consequences and reformulations. 

\begin{cor}\label{cor: hom orthogonal}
If $A,B$ are $Z_\bullet$-semistable with $t_0(A)<t_0(B)$ then $\Hom(A,B)=0$.
\end{cor}

\begin{proof}
If $f:A\to B$ is nonzero, $A\subset A\oplus B$ and $A\cong (\text{graph of $f$}) \subset A\oplus B$ are two HN filtrations of $A\oplus B$ contradicting it uniqueness.
\end{proof}

Let $A\in \cW_Z(s)$, $B\in \cW_Z(t)$ where $s<t$ {(defined in Lemma \ref{lem: stable means in the wide subcat}). By Corollary \ref{cor: hom orthogonal}, $\Hom(A,B)=0$.} More generally we have the following well-known corollary where, for any connected subset $S\subseteq \mathbb R$, {we define $\cW_Z(S)$ to be the full subcategory of $\cL$ of all objects $M$ so that the numbers $t_1,\cdots,t_m$ for the HN-filtration of $M$ all lie in $S$.} We recall that a \emph{torsion pair} is a pair of full subcategories $(\cT,\cF)$ in an abelian category $\cA$ so that
\begin{enumerate}
\item An object $A\in \cA$ lies in $\cT$ if and only if $\Hom_\cA(A,B)=0$ for all $B\in\cF$.
\item An object $B\in \cA$ lies in $\cF$ if and only if $\Hom_\cA(A,B)=0$ for all $A\in\cT$.
\end{enumerate}

Important examples of $\cW_Z(S)$ are when $S$ is an interval.

\begin{cor} Let $Z_\bullet$ be green. Then, for any $t_0\in \mathbb R$,
$\cW_Z(-\infty,t_0]$ and $\cW_Z(t_0,\infty)$ form a torsion pair in $\cL=nmod\text-\cM$ or $mod\text-\Lambda$ as do $\cW_Z(-\infty,t_0)$ and $\cW_Z[t_0,\infty)$.
\end{cor}

\begin{proof} It is clear that $\Hom(A,B)=0$ for any $A\in \cW_Z(-\infty,t_0]$ and $B\in \cW_Z(t_0,\infty)$. Conversely,
$M\in \cW_Z(t_0,\infty)$ if and only if it does not have a submodule in $\cW_Z(-\infty,t_0]$ and $M\in \cW_Z(-\infty,t_0]$ iff it does not have a quotient module in $\cW_Z(t_0,\infty)$. So, $\cW_Z(-\infty,t_0]$ and $\cW_Z(t_0,\infty)$ form a torsion pair. The other case is similar.
\end{proof}


%
%

\section{Finite HN-systems and forward hom-orthogonality}\label{sec3}

In this section we consider stability functions and paths which are finite with respect to a fixed finite dimensional algebra $\Lambda$ in the sense that the path crosses only finitely many walls $D(M)$ given by $\Lambda$-modules $M$. The resulting HN-stratification of $mod\text-\Lambda$ is of course finite. We call it a ``finite HN-system'' for $\Lambda$ and we show that it is equivalent to a ``forward hom-orthogonal sequence''.

\subsection{Finite HN-systems}

For any $\Lambda$-module $M$ let $\cE(M)$ denote the full subcategory of $mod\text-\Lambda$ of all modules $X$ having a filtration with all subquotients isomorphic to $M$, i.e., $X$ is an ``iterated self-extension'' of $M$. The following is an easy exercise.

\begin{lem}
If $M$ is Schurian then $\cE(M)$ is an abelian category.\qed
\end{lem}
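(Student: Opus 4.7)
The category $\cE(M)$ is additive, contains zero, and is closed under extensions directly from its definition. To prove it is abelian as a full subcategory of $mod\text-\Lambda$, it suffices to show that for any morphism $f:X\to Y$ in $\cE(M)$, the kernel, image, and cokernel of $f$ computed in $mod\text-\Lambda$ all lie in $\cE(M)$; the categorical kernel and cokernel in $\cE(M)$ then coincide with those in $mod\text-\Lambda$, and the canonical factorization of $f$ through its image witnesses the abelian axioms.

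The main technical tool would be a \emph{Hom-dichotomy}: for every $X\in\cE(M)$, any nonzero morphism $M\to X$ is a monomorphism and any nonzero morphism $X\to M$ is an epimorphism. I would prove this by induction on the length $n$ of an $M$-filtration of $X$, with base case $X=M$ given by Schur's lemma applied to the division algebra $\End_\Lambda(M)$; the inductive step writes a short exact sequence $0\to X_{n-1}\to X\to M\to 0$ (or its dual) and analyzes the given map by composing with the sequence, one branch being Schur and the other the induction hypothesis on $X_{n-1}$. From the dichotomy I would then deduce the auxiliary lemma: \emph{if $N\subseteq X$ with $N\cong M$ and $X\in\cE(M)$, then $X/N\in\cE(M)$}. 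Given an $M$-filtration $0\subset N_1\subset\cdots\subset X$, the Hom-dichotomy applied to the composite $N\hookrightarrow X\to X/N_1$ forces either $N=N_1$, in which case $X/N=X/N_1\in\cE(M)$, or $N\cap N_1=0$, in which case $X/N$ fits into a short exact sequence $0\to N_1\to X/N\to (X/N_1)/N'\to 0$ where $N'\cong M$ is the image of $N$ in $X/N_1$, and $(X/N_1)/N'\in\cE(M)$ by induction on length.

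With the dichotomy and the auxiliary lemma in hand, closure under kernels, images, and cokernels follows by induction on $\ell(X)+\ell(Y)$, where $\ell$ denotes the length of an $M$-filtration. Pick an $M$-subobject $N\subseteq X$ with $X/N\in\cE(M)$; by the dichotomy, $f|_N$ is either zero or injective. In the first case $f$ factors as $X\twoheadrightarrow X/N\xrightarrow{\bar f}Y$, and $\ker\bar f$, $\im\bar f$, $\coker\bar f\in\cE(M)$ by induction, while $\ker f$ is built as an extension of $\ker\bar f$ by $N$. In the second case $f(N)\cong M$ lies in $Y$, the auxiliary lemma gives $Y/f(N)\in\cE(M)$, and the induced map $\bar f\colon X/N\to Y/f(N)$ has strictly smaller total length; one recovers $\ker f\cong\ker\bar f$, $\im f$ as an extension of $\im\bar f$ by $f(N)$, and $\coker f\cong\coker\bar f$, all in $\cE(M)$. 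The main obstacle is avoiding circular reasoning among the three closure properties, which is precisely why the auxiliary lemma on quotients by an $M$-subobject must be proved independently before the main induction is launched.
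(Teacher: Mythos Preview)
Your proof is correct. The paper itself provides no proof of this lemma: it is stated as ``an easy exercise'' and marked with \qed, so there is nothing to compare at the level of strategy.

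Your argument supplies all the details one would want. The Hom-dichotomy is the right engine, and your care in isolating the auxiliary lemma (quotient of $X\in\cE(M)$ by an arbitrary copy of $M$ stays in $\cE(M)$) before launching the main induction is exactly what keeps the argument from becoming circular. The case split on $f|_N$ and the snake-lemma style identifications $\ker f\cong\ker\bar f$, $\coker f\cong\coker\bar f$ in the injective case are correct. If anything, the write-up is more thorough than what the paper's phrasing suggests was intended; a reader who regards this as routine probably has in mind essentially your dichotomy together with a one-line induction, but the logical structure is the same.
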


\begin{lem}\label{lem: W=E(M)}
Let $M$ be the unique indecomposable object of a wide subcategory $\cW$ of $mod\text-\Lambda$. Then $M$ is Schurian and $\cW=\cE(M)$.
\end{lem}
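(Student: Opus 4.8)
The plan is to argue in two stages: first that $M$ must be Schurian, then that every object of $\cW$ lies in $\cE(M)$ (the reverse inclusion $\cE(M)\subseteq\cW$ being immediate since $\cW$ is closed under extensions and contains $M$). For the Schurian claim, I would take any endomorphism $f\colon M\to M$ and use that $\cW$ is a wide subcategory, hence closed under kernels and cokernels (and images, as $\im f=\coker(\ker f\into M)$ computed in $\cW$, which agrees with the computation in $mod\text-\Lambda$ since the embedding is exact). Thus $\ker f$, $\im f$ and $\coker f$ all lie in $\cW$. Since $M$ is the unique indecomposable object of $\cW$, every object of $\cW$ is a direct sum of copies of $M$; in particular $\im f\cong M^{\oplus r}$ for some $r\ge 0$. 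Comparing lengths, $\ell(\im f)=r\cdot\ell(M)\le \ell(M)$ forces $r\in\{0,1\}$. If $r=0$ then $f=0$; if $r=1$ then $\ker f$ has length $0$, so $f$ is injective, and then $\coker f$ has length $0$, so $f$ is an isomorphism. Hence $\End_\Lambda(M)$ is a division ring and $M$ is Schurian. (This also uses that $M$, being indecomposable in a Krull-Schmidt category, has local endomorphism ring, so one could alternatively invoke that a local ring in which every element is $0$ or a unit is a division ring; the length argument above is self-contained.)

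For the inclusion $\cW\subseteq\cE(M)$: let $X\in\cW$ be arbitrary and nonzero. Since $\cW$ is closed under direct summands, the Krull-Schmidt decomposition of $X$ in $mod\text-\Lambda$ has all indecomposable summands in $\cW$, hence each is isomorphic to $M$; so $X\cong M^{\oplus k}$. This is visibly an iterated self-extension of $M$ (a direct sum is a trivial extension), so $X\in\cE(M)$. Combined with $\cE(M)\subseteq\cW$ — every iterated extension of $M$ lies in $\cW$ because $\cW$ is extension-closed and contains $M$ and $0$ — we get $\cW=\cE(M)$.

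I do not expect any genuine obstacle here; the one point requiring a little care is the claim that ``unique indecomposable object'' really forces every object of $\cW$ to be $M^{\oplus k}$. This needs that $\cW$, being closed under summands inside the Krull-Schmidt category $mod\text-\Lambda$, is itself Krull-Schmidt with the same indecomposables, and that the previous Lemma (\emph{$\cE(M)$ abelian when $M$ Schurian}) is only invoked \emph{after} Schurianness of $M$ is established, so there is no circularity. The only other subtlety is to make sure kernels/cokernels/images computed in the wide subcategory $\cW$ coincide with those computed in $mod\text-\Lambda$; this is exactly the statement that the inclusion $\cW\into mod\text-\Lambda$ is an exact embedding of abelian categories, which is part of the definition of wide subcategory, so it is free.
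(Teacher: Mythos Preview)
Your proof is correct, but it follows a different line from the paper's. The paper argues by minimality: it takes a putative $X\in\cW\setminus\cE(M)$ of minimal length, finds the smallest nonzero submodule $X_0\subseteq X$ lying in $\cW$, observes that $X_0$ is Schurian (since the image of any endomorphism of $X_0$ lies in $\cW$ and would otherwise be strictly smaller), identifies $X_0\cong M$ as the unique indecomposable, and then uses minimality on $X/M$ to obtain $X\in\cE(M)$. Schurianness of $M$ falls out of the same $X_0$ argument.

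You instead invoke Krull--Schmidt in $mod\text-\Lambda$ together with closure under summands to see at once that every object of $\cW$ is $M^{\oplus k}$; both the Schurian claim (via a length count on $\im f$) and the equality $\cW=\cE(M)$ then follow immediately. Your route is shorter and perfectly adequate in the finite-dimensional setting where Krull--Schmidt is automatic. The paper's filtration argument is a bit more intrinsic---it only needs that a Schurian object is indecomposable rather than the full Krull--Schmidt decomposition---and the pattern of locating a simple subobject in $\cW$ recurs later in the paper (e.g.\ in the proof of Theorem~\ref{thm: finite green (a) implies finite (c)} and implicitly in the HN arguments), which may explain the author's preference.
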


\begin{proof}
Since $\cW$ is closed under extensions and contains $M$, we have $\cE(M)\subseteq \cW$. Conversely, let $X$ be the minimal object of $\cW$ which is not in $\cE(M)$. Let $X_0\subset X$ be the smallest submodule of $X$ which lies in $\cW$. Then $X_0$ must be Schurian since the image of any nonzero endomorphism of $X_0$ must also lie in $\cW$. By assumption, $X_0= M$. By minimality of $X$ we have $X/M\in\cE(M)$. So, $X\in\cE(M)$. We conclude that $\cW=\cE(M)$.
\end{proof}

\begin{defn}\label{def: finite HN-system}
By a \emph{finite Harder-Narasimhan (HN) system} for $mod\text-\Lambda$ we mean a finite sequence of abelian subcategories $\cE(M_1),\cdots,\cE(M_m)\subset mod\text-\Lambda$ with $M_i$ Schurian which give an HN-stratification of $mod\text-\Lambda$. I.e., for any $\Lambda$-module $X$ there is a unique filtration
\[
	0=X_0\subseteq X_1\subseteq X_2\subseteq\cdots\subseteq X_m=X
\]
so that each $X_k/X_{k-1}$ is in $\cE(M_k)$.
\end{defn}

A finite HN-system is exactly the kind of HN-stratification that we get from a ``finite'' nonlinear stability function $Z_\bullet$. Bridgeland considered locally finite HN-stratifications or ``slicing'' in \cite{B} which are very similar to finite HN-systems.

\begin{defn}\label{def: finite Z and gamma}
A nonlinear stability function $Z_\bullet$ for $\Lambda$ will be called \emph{finite} if it is generic and there are only finitely many semistable pairs $(M_i,t_i)$, up to isomorphism, with indecomposable $M_i$ and the $t_i$ are all distinct.

A reddening path $\gamma:\mathbb R\to\mathbb R^n$ will be called \emph{finite} if there are only finitely many real numbers $t_i$ so that $\gamma(t_i)$ lie in some $D(M_i)$ for $M_i$ indecomposable and if, for each $t_i$, $M_i$ is uniquely determined up to isomorphism.
\end{defn}

The statement that, for the semistable pairs $(M_i,t_i)$ of a finite $Z_\bullet$, the $t_i$ are all distinct means that $M_i$ is uniquely determined by $t_i$. This implies the following.

\begin{prop}\label{prop: (a) iff (b)}
Let $Z_\bullet$ be a nonlinear stability function for $\Lambda$ with corresponding path $\gamma_Z$. Then $Z_\bullet$ is finite if and only if $\gamma_Z$ is finite.\qed
\end{prop}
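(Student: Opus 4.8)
The plan is to reduce the statement to the dictionary between $Z_\bullet$-semistable pairs and points of $\gamma_Z$ lying on semistability sets, which is already supplied by Lemma \ref{lem1}. First I would record the basic translation: for any $t_0\in\RR$ and any nilpotent module $M$, the pair $(M,t_0)$ is a $Z_\bullet$-semistable pair if and only if $\gamma_Z(t_0)\in D(M)$. Indeed, $\gamma_Z(t_0)\in D(M)$ forces $\gamma_Z(t_0)\in H(M)$, which by the formula displayed just before Lemma \ref{lem1} is equivalent to $t_0=\mu_{t_0}(M)$ (condition (2) in the definition of a semistable pair), and, granting this slope-matching, Lemma \ref{lem1}(1) identifies $\gamma_Z(t_0)\in D(M)$ with $M$ being $Z_{t_0}$-semistable (condition (1)); the converse is the same computation run backwards, since under $t_0=\mu_{t_0}(M)$ one has $\gamma_Z(t_0)=t_0 b_{t_0}-a_{t_0}$ equal to the unique point where the linear path of $Z_{t_0}$ meets $H(M)$. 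Restricting to indecomposable $M$, this is a genuine bijection between the $Z_\bullet$-semistable pairs with indecomposable first coordinate and the pairs $(M,t_0)$ with $M$ indecomposable and $\gamma_Z(t_0)\in D(M)$.

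Next I would observe that both notions of \emph{finite} already carry the matching genericity hypothesis: a finite $Z_\bullet$ is by definition generic, a finite reddening path is in particular a reddening path, and by Theorem \ref{thm: (a) iff (b)} $Z_\bullet$ is generic exactly when $\gamma_Z$ is a reddening path. So it suffices, under this common hypothesis, to match the two remaining conditions across the bijection of the first paragraph. On one hand, the set of real numbers $t$ for which $\gamma_Z(t)$ lies in some $D(M)$ with $M$ indecomposable coincides with the set of times occurring in a $Z_\bullet$-semistable pair with indecomposable first coordinate, so one set is finite (up to isomorphism of the modules) if and only if the other is. On the other hand, for a fixed such time $t_0$, the indecomposable $M$ with $\gamma_Z(t_0)\in D(M)$ are exactly the indecomposable $M$ with $(M,t_0)$ a semistable pair; hence ``$M$ is uniquely determined up to isomorphism at each time'' (the path-side condition) is literally the assertion that ``the $t_i$ are all distinct'' (the $Z_\bullet$-side condition), both saying that the indecomposable first coordinate of a semistable pair is determined by its second coordinate. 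Combining the two equivalences gives $Z_\bullet$ finite $\iff$ $\gamma_Z$ finite.

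I expect no real obstacle here; the only care needed is bookkeeping about what ``finitely many, up to isomorphism'' and ``uniquely determined up to isomorphism'' mean on each side, and making sure the translation of the first paragraph is an equivalence rather than a one-way implication — in particular that membership of $\gamma_Z(t_0)$ in $D(M)$ already encodes the slope condition $t_0=\mu_{t_0}(M)$, so that no semistable pairs are created or destroyed in passing between the two descriptions. I would also note explicitly that there is nothing extra to check relative to the algebra $\Lambda$, since $D(M)$ depends only on the $\Lambda$-submodules of $M$, exactly as in Theorem \ref{thm: path for Z goes through D(M)} and Lemma \ref{lem1}.
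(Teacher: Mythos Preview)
Your proposal is correct and follows exactly the approach the paper intends: the proposition carries a \qed in the paper, and the sentence immediately preceding it (``The statement that, for the semistable pairs $(M_i,t_i)$ of a finite $Z_\bullet$, the $t_i$ are all distinct means that $M_i$ is uniquely determined by $t_i$. This implies the following.'') is the entire argument, which is precisely the bookkeeping you spell out via Lemma~\ref{lem1} and Theorem~\ref{thm: (a) iff (b)}.
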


\begin{thm}\label{thm: finite green (a) implies finite (c)}
Let $Z_\bullet$ be a finite, green stability function with semistable pairs $(M_i,t_i)$. Then each pair is $Z_\bullet$-stable, each $M_i$ is Schurian and $\cE(M_1),\cdots,\cE(M_m)$ form a finite HN-system.
\end{thm}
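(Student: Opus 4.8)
The plan is to show that, when $Z_\bullet$ is green and finite, the Harder--Narasimhan data attached to $Z_\bullet$ in Section~\ref{sec2} consists precisely of the categories $\cE(M_i)$; the three assertions of the theorem then drop out of Lemma~\ref{lem: W=E(M)} and Theorem~\ref{thm: HN filtration reworded}. Since $Z_\bullet$ is green, Theorem~\ref{thm: HN filtration reworded} is available; since it is finite, there are only finitely many $Z_\bullet$-semistable pairs $(M_i,t_i)$ with indecomposable $\Lambda$-module $M_i$, and the $t_i$ are distinct.

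The heart of the argument is an identification of wide subcategories. For $t\in\RR$ put $\cS^\Lambda_Z(t):=\cS_Z(t)\cap mod\text-\Lambda$; since $mod\text-\Lambda$ is an exactly embedded abelian full subcategory of $nmod\text-\cM$, the observation preceding the wide-subcategory lemma of \S\ref{ss: HN filtration} makes $\cS^\Lambda_Z(t)$ a wide subcategory of $mod\text-\Lambda$, in particular closed under direct summands. Using that $mod\text-\Lambda$ is Krull--Schmidt, any nonzero object of $\cS^\Lambda_Z(t)$ has an indecomposable summand lying in $\cS^\Lambda_Z(t)$, which is an indecomposable $\Lambda$-module $N$ with $(N,t)$ a $Z_\bullet$-semistable pair; finiteness of $Z_\bullet$ forces $(N,t)$ to be one of the $(M_i,t_i)$, so $t=t_i$ and, the $t_i$ being distinct, $N\cong M_i$. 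Hence $\cS^\Lambda_Z(t)=0$ unless $t\in\{t_1,\dots,t_m\}$, and $\cS^\Lambda_Z(t_i)$ has $M_i$ as its unique indecomposable object. By Lemma~\ref{lem: W=E(M)}, $M_i$ is Schurian and $\cS^\Lambda_Z(t_i)=\cE(M_i)$; this category is abelian by the lemma preceding Lemma~\ref{lem: W=E(M)}.

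For stability, suppose a nonzero $M'\subsetneq M_i$ had $\mu_{t_i}(M')=t_i$. Every submodule of $M'$ is a submodule of $M_i$, so its $\mu_{t_i}$-slope is $\ge t_i$; thus $M'$ is $Z_{t_i}$-semistable with $\mu_{t_i}(M')=t_i$, i.e.\ $M'\in\cS^\Lambda_Z(t_i)$. An indecomposable summand of $M'$ would then be isomorphic to $M_i$, impossible since $\dim_K M'<\dim_K M_i$. So $M_i$ is $Z_{t_i}$-stable, i.e.\ $(M_i,t_i)$ is a $Z_\bullet$-stable pair.

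Finally, for the stratification: given a $\Lambda$-module $X$, Theorem~\ref{thm: HN filtration reworded} gives a unique chain $0=Y_0\subsetneq\cdots\subsetneq Y_\ell=X$ with $Y_j/Y_{j-1}\in\cS_Z(s_j)$ and $s_1<\cdots<s_\ell$. Each $Y_j$ is a $\Lambda$-module, so $Y_j/Y_{j-1}$ is a nonzero object of $\cS^\Lambda_Z(s_j)$; hence $s_j=t_{i_j}$ with $i_1<\cdots<i_\ell$ (the $s_j$ increase) and $Y_j/Y_{j-1}\in\cE(M_{i_j})$. Padding with repeats ($X_k:=X_{k-1}$ for $k\notin\{i_1,\dots,i_\ell\}$) produces a filtration $0=X_0\subseteq\cdots\subseteq X_m=X$ with $X_k/X_{k-1}\in\cE(M_k)$ for every $k$. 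For uniqueness, deleting from an arbitrary such filtration the steps with zero subquotient leaves a strict chain whose successive quotients lie in $\cS_Z(t_k)$ for strictly increasing indices $k$, hence strictly increasing $t_k$; the uniqueness clause of Theorem~\ref{thm: HN filtration reworded} determines that chain, and with it the original padded one. This verifies Definition~\ref{def: finite HN-system}. The step I expect to demand the most care is the identification $\cS^\Lambda_Z(t_i)=\cE(M_i)$ in the second paragraph, where both the hypothesis that the $t_i$ are distinct and the Krull--Schmidt property of $mod\text-\Lambda$ are essential; the remaining assertions are then formal.
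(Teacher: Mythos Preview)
Your argument is correct and follows essentially the same route as the paper: both proofs use the distinctness of the $t_i$ together with Lemma~\ref{lem: W=E(M)} to identify $\cS_Z(t_i)=\cE(M_i)$, and then invoke Theorem~\ref{thm: HN filtration reworded} for the HN-stratification. The paper phrases the stability step geometrically (if $(M_i,t_i)$ were not stable then $\gamma_Z(t_i)\in\partial D(M_i)$, so $\gamma_Z(t_i)\in D(M')$ for some proper $M'$, and the minimal such $M'$ is Schurian, contradicting uniqueness), whereas you argue algebraically via an indecomposable summand of $M'$; these are the same idea in different clothing, and your more explicit handling of the padding and uniqueness of the filtration is a welcome elaboration of what the paper leaves implicit.
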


\begin{proof}
The pair $(M_i,t_i)$ is stable iff $\gamma_Z(t_i)\in int\,D(M_i)$. So, suppose not. Then $\gamma_Z(t_i)\in \partial D(M_i)$ which implies that $\gamma_Z(t_i)\in D(M')$ for some proper submodule $M'\subsetneq M_i$. The minimal such $M'$ must be Schurian which contradicts the uniqueness of $M_i$.

By Theorem \ref{thm: HN filtration}, we obtain an HN-stratification $\cW_Z(t_1),\cdots,\cW_Z(t_m)$ of $mod\text-\Lambda$. By Lemma \ref{lem: W=E(M)}, $\cW_Z(t_i)=\cE(M_i)$ proving that we have a finite HN-system.
\end{proof}


\subsection{Forward hom-orthogonality} A powerful reformulation of a finite HN-system.

\begin{defn}\label{def: forward hom-orthogonal}
We call a sequence of $\Lambda$-modules $M_1,\cdots,M_m$ \emph{weakly forward hom-orthogonal} if $\Hom_\Lambda(M_i,M_j)=0$ for all $i<j$. We call it \emph{maximal} if 
\begin{enumerate}
\item It cannot be embedded in a longer weakly forward hom-orthogonal sequence.
\item Each $M_i$ is Schurian.
\end{enumerate}
\end{defn}

An example of a weakly forward hom-orthogonal sequence of modules, with length $m=1$ is $M_1=\bigoplus S_i$, the sum of all simple modules. An example of a maximal forward hom-orthogonal sequence is given by taking $\Lambda$ of finite representation type so that its Auslander-Reiten quiver has no oriented cycles. Then the indecomposable $\Lambda$-modules are all Schurian and form a maximal forward hom-orthogonal sequence when they are ordered from right to left in the Auslander-Reiten quiver. In \cite{PartII} we use this observation to construct maximal green sequences of maximal length for many cluster-tilted algebras of finite type.

\begin{thm}\label{thm: HN statification is Schurian hom orthog}
Given a finite sequence of Schurian $\Lambda$-modules $M_1,\cdots,M_m$, the following are equivalent.
\begin{enumerate}
\item The sequence is maximal forward hom-orthogonal.
\item $\cE(M_1),\cdots,\cE(M_m)$ is a finite HN-system for $mod\text-\Lambda$.
\end{enumerate}
\end{thm}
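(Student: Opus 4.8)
The plan is to prove the two implications separately, exploiting the description of a finite HN-system in terms of the wide subcategories $\cE(M_i)$ together with the torsion-pair corollary from Section~\ref{sec2}. Throughout I will use the fact (Lemma~\ref{lem: W=E(M)}) that a wide subcategory with a unique indecomposable $M$ equals $\cE(M)$ and forces $M$ to be Schurian, and the observation that $\Hom_\Lambda(A,B)=0$ whenever $A\in\cE(M_i)$, $B\in\cE(M_j)$ with $i<j$, which follows from the HN-filtration of $A\oplus B$ (take the HN-filtration; it must split the subquotients into $\cE(M_i)$'s in increasing order of index, so a nonzero map would violate uniqueness/orthogonality). In particular $(2)\Rightarrow$ weak forward hom-orthogonality of $M_1,\dots,M_m$ is immediate since $M_i\in\cE(M_i)$.

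For $(2)\Rightarrow(1)$, it remains to show maximality: no Schurian module $N$ can be inserted to keep the sequence weakly forward hom-orthogonal. Suppose $N$ is such a module and consider its HN-filtration $0=N_0\subsetneq N_1\subsetneq\cdots\subsetneq N_r=N$ with subquotients in $\cE(M_{k_1}),\dots,\cE(M_{k_r})$, $k_1<\cdots<k_r$. Because $\Hom_\Lambda(M_{k_1},-)\ne 0$ detects the first layer and $\Hom_\Lambda(-,M_{k_r})\ne 0$ detects the last layer, inserting $N$ at any position $p$ in the sequence $M_1,\dots,M_m$ either violates $\Hom_\Lambda(M_{k_1},N)$ hom-orthogonality (if $p>k_1$, we need $\Hom(M_{k_1},N)=0$ but the first layer gives a nonzero composite $M_{k_1}\onto (\text{something in }\cE(M_{k_1})\subseteq N_1)\into N$ — here I use that $M_{k_1}$ is Schurian so a nonzero map into an object of $\cE(M_{k_1})$ exists) or violates $\Hom_\Lambda(N,M_{k_r})$ hom-orthogonality (if $p\le k_r$). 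Since $k_1\le k_r$, one of the two always fails; the only escape is $r=1$ and $N\in\cE(M_{k_1})$, but then $N$ indecomposable Schurian forces $N\cong M_{k_1}$ (an object of $\cE(M)$ that is indecomposable with $M$ Schurian is $M$ itself, by a length argument), so $N$ is already in the sequence.

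For $(1)\Rightarrow(2)$, assume $M_1,\dots,M_m$ is maximal forward hom-orthogonal with each $M_i$ Schurian. I would first observe that each $\cE(M_i)$ is an abelian (indeed wide) subcategory, and that the categories $\cE(M_i)$ are pairwise ``orthogonal in the right direction'' by hypothesis. The key is to build, for an arbitrary $X\in mod\text-\Lambda$, the filtration required by Definition~\ref{def: finite HN-system}. I would do this by the standard torsion-theory induction: let $\cT_i$ be the smallest torsion class containing $\cE(M_1),\dots,\cE(M_i)$ (equivalently, the modules with no quotient mapping nontrivially to any $M_j$, $j>i$, once we know maximality pins down the list), giving a chain of torsion classes $0=\cT_0\subseteq\cT_1\subseteq\cdots\subseteq\cT_m$, and set $X_i$ to be the $\cT_i$-torsion submodule of $X$. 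Exactness of torsion radicals gives $X_i/X_{i-1}\in\cT_i\cap{}^\perp\cT_{i-1}$, and the content is to identify this intersection with $\cE(M_i)$: it is contained in $\cE(M_i)$ because any indecomposable summand $Y$ of a subquotient, being Schurian-filtered and orthogonal to all $M_j$ with $j\ne i$ in the appropriate direction, must be an iterated self-extension of $M_i$ — and here maximality is exactly what guarantees there is no ``new'' Schurian module $Y\not\cong M_i$ surviving. Finally $X_m=X$ because otherwise $X/X_m$ would be a nonzero module with no nonzero map from any $M_i$ and no nonzero map to any $M_j$; taking a Schurian subquotient of it would produce an insertable module, contradicting maximality.

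The main obstacle is the step in $(1)\Rightarrow(2)$ where one shows $\cT_i\cap{}^\perp\cT_{i-1}=\cE(M_i)$ and, relatedly, that $X_m=X$: this is precisely where ``maximality'' has to be converted into a structural statement, and it requires arguing that a hypothetical indecomposable Schurian $Y$ appearing in some subquotient but not isomorphic to any $M_i$ could be spliced into the sequence. Making that splice legitimate — i.e., checking $\Hom_\Lambda(M_i,Y)=0$ for $i$ below the insertion point and $\Hom_\Lambda(Y,M_j)=0$ above it — uses that $Y$ sits between consecutive torsion classes, so it has no quotient in $\cup_{j>i}\cE(M_j)$ and no sub in $\cup_{j<i}\cE(M_j)$; translating ``no quotient/sub in $\cE(M_j)$'' into ``$\Hom$ to/from $M_j$ vanishes'' needs the Schurian hypothesis on the $M_j$ and a short argument that a nonzero map $M_j\to Y$ (resp. $Y\to M_j$) would produce such a sub/quotient. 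I expect this bookkeeping, rather than any deep input, to be the crux; everything else is the formal torsion-pair machinery already set up by the Corollary in Section~\ref{sec2} and Lemma~\ref{lem: W=E(M)}.
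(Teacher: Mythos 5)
Your proof of $(2)\Rightarrow(1)$ is correct and is in fact more detailed than the paper, which dismisses this direction as ``trivial and well-known.'' Your argument via the HN-filtration of a hypothetical inserted module $N$ --- detecting $\Hom(M_{k_1},N)\ne0$ via the bottom layer and $\Hom(N,M_{k_r})\ne0$ via the top layer, and showing $k_1\le k_r$ blocks every insertion point --- is clean and complete. (The side case ``$r=1$'' you single out is not actually an escape: $k_1=k_r$ still makes $k_r<p\le k_1$ impossible, so you did not need the separate ``indecomposable Schurian object of $\cE(M)$ is $M$'' lemma, though that lemma is true and your length-argument sketch for it is fine once you also use Schurian-ness of $N$, not just indecomposability.)

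For $(1)\Rightarrow(2)$ you take a genuinely different route from the paper. The paper's proof is a direct induction on the length of $X$: it picks the minimal $k$ with $\Hom(M_k,X)\neq0$, shows the minimal nonzero $f:M_k\to X$ is a monomorphism (otherwise $\im f$ could be spliced between $M_{k-1}$ and $M_k$, contradicting maximality), applies the inductive hypothesis to $\coker f$, and then proves by a short case analysis (using maximality to produce a nonzero map $E\to M_k$ from the inverse image $E$ of the first HN-layer of $\coker f$) that the HN-filtration of $\coker f$ begins no earlier than index $k$. You instead build a chain of torsion classes $\cT_0\subseteq\cdots\subseteq\cT_m$ and take $X_i$ to be the $\cT_i$-torsion submodule of $X$. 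That framework is sound, and your argument that $X_m=X$ works (take a simple submodule of $X/X_m$, not a subquotient; it is automatically Schurian and has $\Hom(M_i,-)=0$, so it can be appended, contradicting maximality). But the crux --- the identification $\cT_i\cap\cF_{i-1}=\cE(M_i)$ --- is exactly the point you flag as an ``obstacle'' and leave open. That step is not mere bookkeeping: it requires the same mono/epi argument the paper does. Concretely, from $Y\in\cT_i\cap\cF_{i-1}$ and maximality you can deduce $\Hom(M_i,Y)\neq0$ and that the minimal nonzero $f:M_i\to Y$ is injective (else $\im f\subsetneq M_i$ lies in $\cT_i\cap\cF_{i-1}$ with dimension vector not a multiple of $\undim M_i$ and can be spliced into the sequence). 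But then you want to induct on $Y/M_i$, and $Y/M_i$ need not lie in $\cF_{i-1}$ --- the long exact sequence only gives $\Hom(M_j,Y)\to\Hom(M_j,Y/M_i)\to\Ext^1_\Lambda(M_j,M_i)$, and the $\Ext^1$ term can be nonzero. This is the real difficulty that your sketch does not address, and it is the place where the paper's ``first splice $M_k\into X$, then analyze the layers of the cokernel'' induction is doing essential work that the naive torsion-radical induction does not reproduce. In short: your $(2)\Rightarrow(1)$ is solid, your $(1)\Rightarrow(2)$ framework is plausible but the key lemma is asserted rather than proved, and closing that gap effectively forces you back into the paper's line of argument.
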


\begin{proof}
 The statement that (2) implies that $M_i$ are forward hom-orthogonal is well-known and very easy \cite{R} Lemma 2.3. Indeed, if $\Hom(M_i,M_j)\neq0$ for $i<j$ then the HN-filtration of $M_i\oplus M_j$ would not be unique. The maximality of this forward hom-orthogonal sequence is also very easy. If there were an indecomposable module $X$ missing, the HN-filtration of $X$ would give nonzero morphisms $M_i\to X$ and $X\to M_j$ for $i<j$ forcing $X$ to be inserted before $M_i$ and after $M_j$ which is impossible. Thus $(2)\Rightarrow (1)$. The converse $(1)\Rightarrow (2)$ is also easy but we will go through it to make sure it is stated correctly.

Let $(M_i)$ be maximal forward hom-orthogonal and let $X$ be any module. Then, there is at least one $k$ so that $\Hom(M_k,X)\neq0$. Let $k$ be minimal and let $f:M_k\to X$ be nonzero. Then $f$ must be a monomorphism. Otherwise, by minimality of $k$, the image of $f$ would fit between $M_k$ and $M_{k-1}$ contradicting the maximality of $(M_i)$. So, we have a short exact sequence $M\hookrightarrow X\twoheadrightarrow Y$.

By induction on the length of $X$, we have an HN-filtration $Y_1\subseteq Y_2\subseteq \cdots\subseteq Y_m=Y$ of $Y=\coker f$ where $Y_i/Y_{i-1}\in \cE(M_i)$. Then it suffices to show:\vskip.2cm

\underline{Claim}: $Y_i=0$ for all $i<k$.\vskip.2cm

Then we obtain an HN-filtration: $0\subset X_k\subset X_{k+1}\subset\cdots\subset X_m=X$ of $X$ where each $X_j$ is the inverse image of $Y_j$ in $X$.

Pf: Let $j$ be minimal so that $Y_j\neq 0$ and suppose $j\le k$. Then we will show that $j=k$. Let $E\subseteq X$ be the inverse image of $Y_j$ in $X$. Then we have an exact sequence:
\[
	0\to M_k\to E\to Y_j\to 0
\]
\begin{enumerate}
\item $\Hom_\Lambda(M_i,E)=0$ for $i<k$ since $E\subseteq X$ and $\Hom(M_i,X)=0$.
\item $\Hom_\Lambda(E,M_p)=0$ for $p>k$ since $\Hom_\Lambda(M_k,M_p)=0$ making any map $E\to M_p$ factor through $Y_j\in\cE(M_j)$. But $j\le k<p$. So, the map is zero.
\end{enumerate}
By maximality of $(M_i)$ there must be a nonzero map $g:E\to M_k$. If the restriction of $g$ to $M_k$ is nonzero then $E=M_k\oplus Y_j$ and $\Hom_\Lambda(M_j,E)\neq0$ which implies $\Hom_\Lambda(M_j,X)\neq0$ since $E\subseteq X$. So, $j=k$. If $g|M_k=0$ then $\Hom_\Lambda(Y_j,M_k)\neq0$ showing again that $j=k$. So, $j=k$ in all cases. The proves the Claim. The Theorem follows.
\end{proof}

Demonet has shown \cite{KD} that maximal hom-orthogonal sequences of Schurian objects are in bijection with maximal chains of torsion classes. Together with Theorem \ref{thm: HN statification is Schurian hom orthog} this implies that HN-statifications as discussed in Theorem \ref{thm: HN statification is Schurian hom orthog} are in bijections with maximal chains of torsion classes. Li and Liu \cite{LL} have constructed a bijection between all three sets for any abelian length category.

%
%

\section{Finite Z and maximal green sequences}\label{sec4}

In this section we restrict to the case when $\Lambda$ is the finite dimensional hereditary algebra. The main result is that a finite green nonlinear stability function gives a maximal green sequence (MGS) for $\Lambda$ and all MGS's are given in this way. 

We first review the characterization of MGS's and more general reddening sequences by exceptional wall crossing sequences from \cite{IOTW2}. In particular, we already know that reddening sequences are given by paths which meet finitely many exceptional semistability sets $D(M_i)$ at distinct times $t_i$ where exceptional means that $M_i$ are exceptional modules. The only thing left to prove is that such a path cannot meet any $D(M)$ for nonexceptional $M$. The proof will be reduced to Theorem \ref{lemma A} which we prove in the last section.

We also show that, in the hereditary case, a MGS is equivalent to a maximal forward hom-orthogonal sequence. This implies Theorem \ref{main theorem} which says that all five notions of stability outlined in the introduction are equivalent in the hereditary case.


\subsection{Review of the cluster complex}\label{ss41} The cluster complex is well-known and there are several different constructions. (See, e.g., \cite{Reading}, \cite{Hubery}.) Here we follow \cite{IOTW2}.

Let $\Lambda$ be a hereditary algebra over $K$. Recall that an \emph{exceptional module} is a finite dimensional module $M$ which is \emph{rigid}, i.e., $\Ext_\Lambda^1(M,M)=0$ and Schurian. For example, the simple modules $S_i$ and their projective covers $P_i$ are exceptional. Exceptional modules are indecomposable and uniquely determined by their dimension vectors which are called (positive) \emph{real Schur roots} and, for each positive real Schur root $\beta$ we used the notation $M_\beta$ for the unique exceptional module with dimension vector $\beta$ and we use $D(\beta)$ to denote $D(M_\beta)$. We call these \emph{exceptional walls}.

Let $\cC_\Lambda$ be the cluster category \cite{BMRRT} of $\Lambda$ which is the orbit category of the bounded derived category of $mod$-$\Lambda$ by the equivalence $F=\tau^{-1}[1]$. We represent indecomposable objects of $\cC_\Lambda$ by representatives in the fundamental domain of $F$ which consists of $\Lambda$-modules and shifted projective modules $P_i[1]$. Then the exceptional objects of $\cC_\Lambda$ are the exceptional modules $M_\beta$ and these shifted indecomposable projective modules $P_i[1]$. To each exceptional object $T=M_\beta$ or $P_i[1]$ we associate the \emph{$g$-vector} $g(T)\in \mathbb Z^n$ which is characterized by the following properties where $f_i=\dim_K S_i$.
\begin{enumerate}
\item If $T=P_i[1]$ then $g(P_i[1])=-f_ie_i$ where $e_i$ is the $i$-th unit vector.
\item If $T=M_\beta$ and $\coprod n_iP_i\to\coprod m_iP_i\to M_\beta$ is a minimal projective presentation then the $i$-th coordinate of $g(M_\beta)$ is $f_i(m_i-n_i)$.
\end{enumerate}
Thus, dot product with $g(T)$ gives the \emph{Euler-Ringel pairing}:
\[
	g(T)\cdot \undim M=\left<\undim T,\undim M\right>=\dim_K\Hom_{\cD^b}(T,M)-\dim_K\Ext^1_{\cD^b}(T,M).
\]
Here we need to take Hom and Ext in the bounded derived category $\cD^b$ of $mod$-$\Lambda$ where, e.g., $\Ext^1_{\cD^b}(P[1],M)=\Hom_\Lambda(P,M)$. In this notation, we have the following.

\begin{thm}[Virtual Stability Theorem]\cite{IOTW2}\label{thm: virtual stability theorem}
$g(X)$ lies in the exceptional wall
\[
 D(M)=\{x\in \mathbb R^n\,|\, x\cdot\undim M=0,\,x\cdot\undim M'\le 0\ \forall M'\subset M\}
 \]
if and only if $\Hom_\Lambda(X,M)=0=\Ext^1_\Lambda(X,M)$.
\end{thm}

We repeat that, in \cite{IOTW2}, the sets $D(M)$ are defined only for exceptional $M$. In this paper $D(M)$ is defined for all modules $M$.

A \emph{cluster tilting object} for $\Lambda$ is an object $T=T_1\oplus \cdots \oplus T_n$ in the cluster category of $\Lambda$ with $n$ nonisomorphic components each of which is an exceptional object so that $\Ext^1_{\cC_\Lambda}(T,T)=0$. This condition of having no self-extensions in the cluster category $\cC_\Lambda$ is equivalent to the condition that it has no self-extensions in the derived category.

{
Following \cite{IOTW2} we define the \emph{$c$-vectors} of a cluster tilting object $T=T_1\oplus \cdots \oplus T_n$ to be the unique collection of vectors $c_i$ so that
\[
	c_i\cdot g_j=-\delta_{ij} \dim \End_{\cC_\Lambda}(T_i)
\]
This can also be written $C^tG=-D$ where $C,G$ are the matrices of $c$-vectors and $g$-vectors and $D$ is the diagonal matrix with entries $f_i=\dim_K\End_{\cC_\Lambda}(T_i)$.
The original equation in \cite{NZ} has the opposite sign: $C^tG=D$. It is shown in \cite{NZ}, \cite{IOTW2} that $C$ is an integer matrix.

We continue to follow \cite{IOTW2}. Using the $c$-matrix $C$ we can construct the exchange matrix of a cluster tilting object as follows. First, we define the \emph{initial exchange matrix} $B_0$ to be the $n\times n$ matrix with entries
\[
	b_{ij}=\dim_{F_i}\Ext^1_\Lambda(S_i,S_j)-\dim_{F_i}\Ext^1_\Lambda(S_j,S_i)
\]
where $F_i=\End_\Lambda(S_i)$. Then $DB_0$ is skew symmetric where $D$ is the diagonal matrix with entries $f_i=\dim_KF_i$. By Theorem 4.1.5(d) of \cite{IOTW2}, we can permute the indices so that this agrees with $f_i=\dim_K\End_{\cC_\Lambda}(T_i)$ above. Given any cluster tilting object $T=T_1\oplus\cdots\oplus T_n$ with components suitably ordered, the \emph{exchange matrix} of $T$ is
\[
	B=D^{-1}C^tDB_0C
\]
where $C$ is the $c$-matrix of $T$ defined above. Note that $DB$ is skew-symmetric. 

By \cite{BMRRT} and \cite{NZ}, this exchange matrix transforms according to the formulas of Fomin and Zelevinsky \cite{FZ}: There is, for any $k=1,\cdots,n$, a unique indecomposable object $T_k'\not\cong T_k$ so that the object $T'$ defined to be $T$ with summand $T_k$ replaced by $T_k'$ is a cluster tilting object. Let $G',C'$ be the matrices of $g$-vectors and $c$-vectors for $T'$ and $G,C$ for $T$ (keeping in mind that our $c$-vectors have the ``wrong sign''). The following theorem from \cite{BMRRT} is proved with these conventions in \cite{IOTW2}.

\begin{thm}\label{thm: exchange matrix transforms}
The entries $b_{ij}'$ of the exchange matrix $B'=D^{-1}C'^tDB_0C'$ for $T'$ are given by
\[
	b_{ij}=\begin{cases} -b_{ij} & \text{if } i=k\text{ or }j=k\\
   b_{ij}+\frac{b_{ik}|b_kj|+|b_{ik}|b_{kj}}2 & \text{otherwise}
    \end{cases}
\]
The entries $c_{ij}$ of the $c$-matrix $C'$ are given by
\[
	c_{ij}=\begin{cases} -c_{ij} & \text{if } j=k\\
   c_{ij}+\frac{c_{ik}|b_kj|-|c_{ik}|b_{kj}}2 & \text{otherwise}
    \end{cases}
\]
\end{thm}
}


For each cluster tilting object $T=\coprod T_i$, there is a conical simplex $R(T)\subset\mathbb R^n$ given as follows. The vertices of $R(T)$ are the $n$ rays generated by the $g$-vectors of the components of $T$. The faces of $R(T)$ are the subsets of the semistability sets $D(M_i)$ in the convex hull of these $n$ rays where $M_i=M_{\beta_i}$ are the unique exceptional modules having the property that, in the derived category, $\Hom_{\cD^b}(T_i,M_j)=0=\Ext_{\cD^b}^1(T_i,M_j)$ or, equivalently, $g(T_i)\in D(M_j)$ for all $i\neq j$.

The \emph{cluster complex} is defined to be the union of the conical simplices $R(T)$. This union is a \emph{simplicial fan} which means that the intersection of any two $R(T)\cap R(T')$ is either empty or a common face of codimension $\ge1$. In particular, any codimension one simplex of the cluster complex is the common face of exactly two simplices $R(T),R(T')$.


\begin{eg}
Take the modulated quiver of type $B_2$ given by
\[
	\mathbb R\xleftarrow{\mathbb C} \mathbb C
\]
with $F_1=\mathbb R$, $F_2=\mathbb C$ with $f_1=1,f_2=2$ and $M_{21}$ is the $\mathbb C$-$\mathbb R$-bimodule $M_{21}=\mathbb C$. Two cluster tilting objects for this quiver are $T=S_2\oplus P_1[1]$ and $T'=S_2\oplus I_1$ with $g$-vectors:
\begin{enumerate}
\item $g(P_1[1])=-f_1e_1=(-1,0)$.
\item $g(S_2)=(-2,2)$ since $P_1^2\to P_2\to S_2$ is a minimal presentation of $S_2$.
\item $g(I_1)=(-1,2)$ since $P_1\to P_2\to I_1$ is a minimal presentation of $I_1$.
\end{enumerate}
{For $T=S_2\oplus P_1[1]$ the $c$-vectors are $-\undim S_2=(0,-1)$, $\undim I_1=(1,1)$ since:
\[
	C^tG=\mat{0 & -1\\ 1&1} \mat{-2 & -1\\2 & 0}=\mat{-2 & 0 \\
	0 & -1}=-D.
\]
For $T'=S_2\oplus I_1$ the $c$-vectors are $\undim P_2=(2,1)$, $-\undim I_2=(-1,-1)$ by a similar calculation.} The cluster complex is given in Figure \ref{FigureB2}.
\begin{figure}[htbp]
\begin{center}
\begin{tikzpicture}[scale=1.3]
\coordinate (A) at (-2.8,0);
\coordinate (B) at (3,0);
\coordinate (B0) at (2.5,0);
\coordinate (C) at (0,-1.2);
\coordinate (D) at (0,2.2);
\coordinate (D0) at (0,-.9);
\coordinate (E) at (0,0);
\coordinate (F) at (-2.3,2.3);
\coordinate (F2) at (-1.15,2.3);
\coordinate (F3) at (0,2.2);
\coordinate (P1s) at (-1,0);
\coordinate (P1) at (1,0);
\coordinate (S2) at (-2,2);
\coordinate (S2L) at (-2,1.8);
\coordinate (I1L) at (-1.2,1);
\coordinate (I1) at (-1,2);
\coordinate (I11) at (-.9,2);
\coordinate (A1) at (-.53,1.2);
\coordinate (A2) at (0.2,1.5);
\coordinate (A3) at (0,1);
\coordinate (A4) at (.8,1.3);
\coordinate (AL) at (.8,1.3);
\draw[thick,<-] (A1)..controls (A2) and (A3)..(A4);
\draw (AL) node[right]{$D(P_2)$};
\draw[very thick] (A)--(B);
\draw[very thick] (C)--(D);
\draw[very thick] (E)--(F);
\draw[very thick] (E)--(F2);
\draw (B0) node[above]{$D(S_2)$};
\draw (D0) node[right]{$D(S_1)$};
\draw[fill] (P1s) circle[radius=2pt] node[below]{$g(P_1[1])$};
\draw[fill] (P1) circle[radius=2pt] node[above]{$g(P_1)$};
\draw[fill] (I1) circle[radius=2pt];
\draw[fill] (S2) circle[radius=2pt] (S2L) node[left]{$g(S_2)$} (I1L) node[left]{$D(I_1)$} (I11) node[right]{$g(I_1)$};
\end{tikzpicture}
\caption{The cluster complex for $B_2$ where $B_2:\mathbb R\leftarrow \mathbb C$. Since $S_1\subset I_1,P_2$, $D(I_1)$ and $D(P_2)$ occur only on the negative side of $D(S_1)$. Also, $g(I_1)\in D(P_2)$ since $\Hom(I_1,P_2)=0=\Ext^1(I_1,P_2)$. And
\[
	\left<I_1,P_2\right>=g(I_1)\cdot \undim P_2=(-1,2)\cdot (2,1)=0.\qquad\qquad
\]Similarly, $g(S_2)\in D(I_1)$ and $g(P_1[1]),g(P_1)\in D(S_2)$.
}
\label{FigureB2}
\end{center}
\end{figure}
\end{eg}


\begin{thm}\cite{IOTW2}\label{thm: crossing the walls}
For every cluster tilting object $T=\bigoplus T_i$ of the cluster category of $\Lambda$ the codimension 1 wall of $R(T)$ are $D(\beta_i)$ for $\varepsilon_1\beta_1,\cdots,\varepsilon_n\beta_n$ the $c$-vectors of the cluster tilting object $T$ where $\varepsilon_i=\pm1$ is the negative of the sign of $g(T_i)\cdot \beta_i$.
\end{thm}

{
\begin{rem}
Theorems \ref{thm: crossing the walls} and \ref{thm: exchange matrix transforms} imply that the Fomin-Zelevinsky $c$-vectors $\varepsilon_i \beta_i$ of a mutation sequence give, up to sign, the walls $D(\beta_i)$ being crossed by the corresponding regions $R(T)$ in the cluster complex.
\end{rem}

Recall from \cite{IOTW2} that a \emph{reddening sequence} consists of a sequence of cluster tilting objects $T_0,\cdots,T_m$ where $T_0=\Lambda[1]$ and $T_m=\Lambda$ so that each $R(T_k)$ shares a wall, say $D(\beta_k)$ with $R(T_{k+1})$. A \emph{maximal green sequence} is a reddening sequence so that each mutation} $T_k\to T_{k+1}$ is \emph{green} in the sense that $R(T_{k+1})$ is on the positive side of $D(\beta_k)$. This gives a path $\gamma$ in $\mathbb R^n$ going through the regions $R(T_k)$ in order and passing through the walls $D(\beta_k)$. However, in this paper we have additional walls $D(M)$ for modules $M$ which may not be exceptional. Theorem \ref{lemma A} and Lemma \ref{lem: finite paths stay in the cluster complex} allow us to ignore these other walls.

\begin{customthm}{A}[Theorem \ref{cor: lemma a}]\label{lemma A}
If $M$ is not an exceptional module then $D(M)$ does not meet the interior of any codimension $0$ or $1$ face of the cluster complex.
\end{customthm}

This implies that a path which meets $D(M)$ for non-exceptional $M$ must either pass through infinitely many walls $D(\beta)$ of the cluster complex or it must pass through a simplex of codimension $\ge2$.

\begin{customlem}{B}\label{lem: finite paths stay in the cluster complex}
Let $\gamma$ be any path in $\mathbb R^n$ which meets only a finite number of exceptional walls $D(\beta)$, and at least one, and which is transverse to the cluster complex, i.e., is disjoint from the codimension $\ge2$ simplices. Then $\gamma$ does not meet $D(M)$ for any $M$ which is not exceptional.
\end{customlem}

\begin{proof}
The assumption that $\gamma$ meets at least one wall means that part of $\gamma$ lies in the union of the two $n-1$ simplices (with codimension 0) which contain that wall. The boundary of each $n-1$ simplex is a union of exactly $n$ faces which are codimension 1 simplices. On the other side of each face, there is another $n-1$ simplex. If $\gamma$ crosses $k$ walls then, by induction on $k$, it will be in the union of $k+1$ open codimension 0 simplices and $k$ open codimension 1 simplices. By Theorem \ref{lemma A}, $\gamma$ cannot meet any $D(M)$ for $M$ not exceptional.
\end{proof}


\subsection{Maximal green sequences} For $\Lambda$ hereditary, the finite, green nonlinear stability functions $Z_\bullet$ correspond to maximal green sequences:

\begin{thm}\label{Rem: nonlinear stability functions and reddening sequences}
Let $Z_t:K_0\Lambda\to\mathbb C$ be a finite, generic nonlinear stability function with stable pairs $(M_i,t_i)$. Then $\varepsilon_1\beta_1,\cdots,\varepsilon_m\beta_m$, where $\beta_i=\undim M_i$ and $\varepsilon=\pm1$, are the $c$-vectors of a reddening sequence of $\Lambda$ and all reddening sequence for $\Lambda$ are obtained in this way. Furthermore $\varepsilon_i$ is negative in the reddening sequence iff $(M_i,t_i)$ is red for $Z_\bullet$. 
\end{thm}

\begin{proof}
One direction follows easily from the lemmas. Given $Z_\bullet$ finite, by Lemma \ref{lem: finite paths stay in the cluster complex}, all $Z_\bullet$-stable modules are exceptional, say $M_k=M_{\beta_k}$. Lemma \ref{lem3} implies that the path $\gamma_Z$ starts in the ``green region'' where all coordinates are negative and ends in the ``red region'' where all coordinates are positive. Lemma \ref{lem1} implies that $\gamma_Z(t)$ is disjoint from any $D(M)$ except at time $t_k$ when $\gamma_Z(t_k)\in D({\beta_k})$. Thus, for each $k$, there is a cluster tilting object $T_k$ so that $\gamma(t)\in R(T_k)$ for $t_{k-1}<t<t_k$. Lemma \ref{lem2} implies that, at $t=t_k$, $\gamma_Z(t)$ crosses $D(\beta_k)$ in the green or red direction depending on whether the expression \eqref{eq: when (b,t) is Z-green} is $<1$ or $>1$, respectively. Therefore, $T_0,\cdots,T_m$ is a reddening sequence with $c$-vectors $\varepsilon_1\beta_1,\cdots,\varepsilon_m\beta_m$.

Conversely, let $T_0,\cdots,T_m$ be a reddening sequence, i.e., a sequence of mutation from $T_0=\Lambda[1]$ to $T_m=\Lambda$. Suppose that $\gamma:\mathbb R\to\mathbb R^n$ is a smooth path starting in $R(T_0)$, ending in $R(T_m)$ and passing through the regions $R(T_k)$ in order of $k$ and transverse to the exceptional walls separating these regions. We may assume $\gamma(t)=t(f_1,\cdots,f_n)$ for $|t|$ large where $f_i=\dim_KS_i$ since these points lie in $R(\Lambda[1])$ and $R(\Lambda)$. Equivalently, the path $a_t=t(f_1,\cdots,f_n)-\gamma(t)$ in $\mathbb R^n$ has bounded support. So
\[
	\gamma(t)= -a_t + t(f_1,\cdots,f_n).
\]
This corresponds to the nonlinear stability function $Z_t(x)=a_t\cdot x+b_t\cdot x\sqrt{-1}$ where $b_t=(f_1,\cdots,f_n)$ for all $t\in\mathbb R$. This clearly satisfies the conditions of Definition \ref{def: nonlinear stability function}.

By construction, the path $\gamma$ crosses exactly $m$ exceptional walls $D(M_1),\cdots,D(M_m)$ at times $t_1<\cdots<t_m$ with $M_i=M_{\beta_i}$ exceptional. By Lemma \ref{lem: finite paths stay in the cluster complex} $\gamma$ crosses no other walls. Therefore, $\gamma$ is a finite reddening path corresponding to a finite stability function $Z_\bullet$. Lemma \ref{lem1} implies that $M_i$ are $Z_{t_i}$-stable and there are no other $Z_\bullet$ semistable modules. Lemma \ref{lem2} implies that $(M_{\beta_i},t_i)$ is green or red for $Z_\bullet$ iff $\gamma$ passes transversely through $D(\beta_i)$ in the green or red direction respectively at time $t_i$. Thus the sequence of stable roots $\beta_i$ given by $Z_\bullet$ is the same up to the correct sign as the $c$-vectors of the arbitrary reddening sequence that we started with.
\end{proof}

\begin{cor}\label{thm: nonlinear Z correspond to MGSs}
Let $Z_t:K_0\Lambda\to\mathbb C$ be a finite, green nonlinear stability function with stable pairs $(M_i,t_i)$. Then each $M_i=M_{\beta_i}$ is exceptional and the sequence of real Schur roots $\beta_1,\beta_2,\cdots,\beta_m$ in increasing order of $t_i$, form the $c$-vectors of a unique maximal green sequence for $\Lambda$. Furthermore, all maximal green sequences for $\Lambda$ are given in this way.\qed
\end{cor}


\subsection{Hom-orthogonality and MGSs} In this subsection we show that a maximal forward hom-orthogonal sequence is equivalent to a maximal green sequence. This is a reformulation of the well-known statement that a maximal green sequence is equivalent to a maximal chain in the poset of finitely generated torsion classes in $mod\text-\Lambda$. An alternate proof which works for quivers with potential is explained in \cite{PartII}.

{\begin{thm}\label{thm: hom-orthog = mgs}
Let $M_1,\cdots,M_m$ be a finite sequence of Schurian modules over a hereditary algebra $\Lambda$. Then the following are equivalent.
\begin{enumerate}
\item The sequence is maximal forward hom-orthogonal.
\item $\beta_k=\undim M_k$ are the $c$-vectors of a maximal green sequence for $\Lambda$. In particular, all $M_k$ are exceptional.
\end{enumerate}
\end{thm}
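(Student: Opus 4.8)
The plan is to splice together the equivalences already established, the only genuinely new input being the hereditary converse of Theorem~\ref{thm: finite green (a) implies finite (c)}.

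For $(2)\Rightarrow(1)$ there is almost nothing to do. Given $\beta_1,\dots,\beta_m$ the $c$-vectors of a maximal green sequence, the final clause of Theorem~\ref{thm: nonlinear Z correspond to MGSs} realizes them by a finite, green nonlinear stability function $Z_\bullet$ whose stable pairs are $(M_{\beta_k},t_k)$, with each $M_{\beta_k}$ exceptional of dimension $\beta_k$. A Schurian module is indecomposable and a real Schur root has a unique indecomposable module, so $\undim M_k=\beta_k$ forces $M_k\cong M_{\beta_k}$. Then Theorem~\ref{thm: finite green (a) implies finite (c)} gives that $\cE(M_1),\dots,\cE(M_m)$ is a finite HN-system, and $(2)\Rightarrow(1)$ of Theorem~\ref{thm: HN statification is Schurian hom orthog} gives maximal forward hom-orthogonality.

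For $(1)\Rightarrow(2)$ I would first invoke Theorem~\ref{thm: HN statification is Schurian hom orthog} to turn the maximal forward hom-orthogonal sequence into a finite HN-system $\cE(M_1),\dots,\cE(M_m)$, so that each $\cE(M_k)$ is a wide subcategory. Setting $\cT_k=\cE(M_1)*\cdots*\cE(M_k)$ (the modules whose HN-filtration uses only the first $k$ strata) and $\cF_k=\cE(M_{k+1})*\cdots*\cE(M_m)$, the torsion-pair corollary in Section~\ref{sec2} makes each $(\cT_k,\cF_k)$ a torsion pair, so one obtains a chain $0=\cT_0\subsetneq\cdots\subsetneq\cT_m=mod\text-\Lambda$ of torsion classes. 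This chain is saturated: a length category with a single simple object — such as $\cE(M_k)$, whose unique simple is $M_k$ — has only the two trivial torsion classes, so nothing can be inserted strictly between $\cT_{k-1}$ and $\cT_k$. Because the HN-system is \emph{finite}, each $\cT_k$ and each $\cF_k$ is generated by finitely many modules, hence is a functorially finite torsion class; for a hereditary algebra these are exactly the torsion classes attached to cluster tilting objects $T$ of $\cC_\Lambda$, whose images are the top-dimensional cones $R(T)$ of the cluster complex. I would conclude that $\cT_k=\cT(T_k)$ for cluster tilting objects $T_k$ with $T_0=\Lambda[1]$ and $T_m=\Lambda$, that consecutive cones meet along a single codimension-one face, and — since the only brick labelling that face is $M_k$ by construction of $\cT_k$ — that this face is the exceptional wall $D(\beta_k)$, the mutation $T_{k-1}\to T_k$ is green at $\beta_k$, and in particular $M_k=M_{\beta_k}$ is exceptional; so $T_0,\dots,T_m$ is a maximal green sequence with $c$-vectors $\beta_1,\dots,\beta_m$.

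Equivalently, to stay inside the paper's own machinery I would thread a $C^1$ path $\gamma$ from the green region through a relative-interior point of each $D(\beta_k)$ to the red region, keeping it inside $\bigcup_k R(T_k)$, transverse to the codimension~$\ge2$ skeleton of the cluster complex, and green at every crossing; then $\gamma$ meets only finitely many exceptional walls, Lemma~B (\ref{lem: finite paths stay in the cluster complex}) forbids any other wall, so $\gamma$ is a finite green path, and Theorem~\ref{thm: (a) iff (b)} together with Theorem~\ref{thm: nonlinear Z correspond to MGSs} returns the maximal green sequence. The hard part will be the structural dictionary underlying $(1)\Rightarrow(2)$: that a \emph{finite} HN-system produces functorially finite torsion classes $\cT_k$, that a saturated chain of functorially finite torsion classes over a hereditary algebra is realized by single mutations of cluster tilting objects, and hence that each $D(M_k)$ is an exceptional wall with an interior point a transverse path can be threaded through — this being the ``well-known'' equivalence between maximal green sequences and maximal chains of finitely generated torsion classes. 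Once that is set up, the path construction and the two final appeals are routine.
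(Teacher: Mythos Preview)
Your $(2)\Rightarrow(1)$ direction matches the paper exactly: Theorem~\ref{thm: nonlinear Z correspond to MGSs} realizes the MGS by a finite green $Z_\bullet$, Theorem~\ref{thm: finite green (a) implies finite (c)} gives a finite HN-system, and Theorem~\ref{thm: HN statification is Schurian hom orthog} converts this to maximal forward hom-orthogonality.

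For $(1)\Rightarrow(2)$ your route is genuinely different. You build the chain of torsion classes $\cT_k$ from the HN-system, argue it is saturated, claim each $\cT_k$ is functorially finite, and then invoke the external dictionary (functorially finite torsion classes $\leftrightarrow$ support tilting modules $\leftrightarrow$ chambers of the cluster fan; saturated chains $\leftrightarrow$ maximal green sequences). The paper explicitly acknowledges this reformulation in the paragraph preceding the theorem, but chooses instead to give a self-contained downward induction: starting from $T_m=\Lambda$, it constructs support tilting modules $T_k$ directly, proving at each step that $M_k$ is a simple object of the Ingalls--Thomas wide subcategory $\cW(T_k)$, identifying the split projective $Q_k$ with $M_k$ as its top (Lemma~\ref{lem: mutation through Sigma}), and mutating through $D(M_k)$ to obtain $T_{k-1}$. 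This keeps the argument inside the paper's own machinery (the cluster complex description from \cite{IOTW2} and the split/nonsplit projective theory from \cite{InTh}) rather than importing the torsion-class poset literature.

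One point you should tighten: the inference ``generated by finitely many modules, hence functorially finite'' is not automatic. The smallest torsion class containing a given module need not be of the form $Gen(T)$ a priori; for hereditary $\Lambda$ this is exactly the Ingalls--Thomas statement you are ultimately appealing to, and it is precisely what the paper's inductive lemma establishes concretely by exhibiting each $\cT_k$ as $Gen(T_k)$. So your approach is correct, but the step you flag as ``the hard part'' really is the whole content of the implication, and the paper's proof supplies it rather than citing it.
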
}

The proof of this theorem occupies the rest of this section.

\begin{proof}
We have already shown that $(2)\Rightarrow (1)$ since, by Corollary \ref{thm: nonlinear Z correspond to MGSs}, any MGS is given by a finite green path which, by Theorem \ref{thm: finite green (a) implies finite (c)} gives an HN-system which is equivalent to (1) by Theorem \ref{thm: HN statification is Schurian hom orthog}. So, it suffices to show that $(1)\Rightarrow (2)$. We use the representation theoretic definition of a maximal green sequence explained in subsection \ref{ss41}. We use Lemma \ref{lem: finite paths stay in the cluster complex} to insure that all walls that we encounter are exceptional walls.

We recall some standard cluster theory in the language of \cite{InTh}, \cite{IOTW2} and \cite{BMRRT}. In checking that the proofs in \cite{InTh} work for modulated quivers  we noticed one misprint: In the proof of Lemma 2.8 in \cite{InTh}, the reference should be to ``proof of Lemma VI.6.1 in \cite{ASS} using the trace of $R$ in $U_0$, not $g(R)$.''

Recall that a \emph{support tilting module} is a $\Lambda$-module $T$ so that $\Ext_\Lambda(T,T)=0$ and the number of (nonisomorphic) summands of $T$ is equal to the size of its support. In the cluster category \cite{BMRRT}, $T$ can be completed to a cluster tilting object with $n$ elements, $n$ being the number of vertices of the modulated quiver of $\Lambda$, by adding the shift $P_i[1]$ of the projective cover $P_i$ of each vertex $i$ not in the support of $T$. Let $R(T)$ denote the simplicial cone of the cluster tilting object $T\oplus \coprod P_i[1]$.

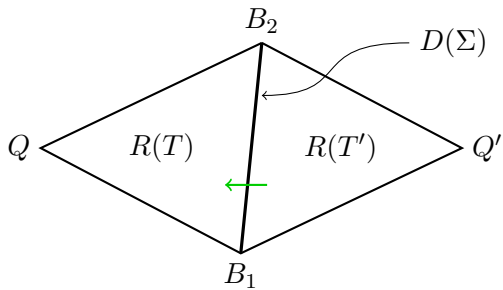
\begin{figure}[htbp]
\begin{center}
\begin{tikzpicture}[scale=.7]
\coordinate (A0) at (0,2);
\coordinate (A1) at (8,2);
\coordinate (B1) at (3.8,0);
\coordinate (B2) at (4.2,4);
\coordinate (B0) at (4.2,3);
\coordinate (L) at (7,4);
\coordinate (C1) at (5,4);
\coordinate (C2) at (6,3);
\coordinate (L1) at (2.3,2);
\coordinate (L2) at (5.7,2);
\coordinate (G1) at (4.3,1.3);
\coordinate (G2) at (3.5,1.3);
\draw[thick] (B2)--(A1)--(B1)--(A0)--(B2);
\draw[very thick](B1)--(B2);
\draw (L) node[right]{$D(\Sigma)$};
\draw[->] (L).. controls (C1) and (C2)..(B0);
\draw(L1)node{$R(T)$};
\draw(L2)node{$R(T')$};
\draw(A0)node[left]{$Q$};
\draw(A1)node[right]{$Q'$};
\draw(B1)node[below]{$B_1$};
\draw(B2)node[above]{$B_2$};
\draw[thick, color=green!80!black,->](G1)--(G2);
\end{tikzpicture}
\caption{$T'\mapsto T$ is a green mutation if and only if the wall separating $R(T)$ and $R(T')$ is $D(\Sigma)$ for some simple object $\Sigma$ in the wide subcategory corresponding to $Gen(T)$ and $Q$ is the unique split projective in $Gen(T)$ which maps onto $\Sigma$.}
\label{Fig: mutation as wall crossing}
\end{center}
\end{figure}

Recall from \cite{InTh} that, for each support tilting module $T$, there is a corresponding torsion class $Gen(T)$ in which $T$ is the sum of the $\Ext$-projective objects. There are two kinds of components of $T$: \emph{split projectives} and \emph{nonsplit projectives} where, by \cite{InTh}, Proposition 2.16, the split projectives of $T$ are the projective objects in the wide subcategory $\cW(T)$ of $mod\text-\Lambda$ corresponding to $Gen(T)$ and, by \cite{InTh}, Proposition 2.24, the nonsplit projectives are those components of $T$ which do not map nontrivially to any object in $\cW(T)$. We will need the following which follows from \cite{InTh}, Theorem 2.35 and Proposition 2.24.

\begin{lem}\label{lem: mutation through Sigma}
The ``red'' walls $D(\Sigma_k)$ of the region $R(T)$ are those opposite (the $g$-vector of) the split projective summands $Q_k$ of $T$ so that $\Sigma_k$ is the top of the projective object $Q_k$ in the wide subcategory $\cW(T)$ of $Gen(T)$.
\end{lem}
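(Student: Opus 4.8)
The statement to be proved, Lemma~\ref{lem: mutation through Sigma}, is an assembly of facts from \cite{InTh}, so the proof should be a careful translation rather than a new argument. The plan is to fix a support tilting module $T$ with torsion class $Gen(T)$ and wide subcategory $\cW(T)$, and to analyze one mutation $R(T')\mapsto R(T)$ across a codimension one wall at a time. First I would recall that a wall of $R(T)$ corresponds to dropping one summand of the completed cluster tilting object $T\oplus\coprod P_i[1]$; the resulting almost-complete object has exactly two completions, giving the two simplices $R(T)$ and $R(T')$ sharing that wall. I would invoke the dichotomy from \cite{InTh} (Theorem~2.35) that each such wall is either ``green'' or ``red'' for $R(T)$, and that the red walls of $R(T)$ are precisely those obtained by removing a \emph{split projective} summand $Q_k$ of $T$ (removing a nonsplit projective, or a shifted projective $P_i[1]$, gives a green wall of $R(T)$, i.e., $R(T)$ lies on the negative side).

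\textbf{Key steps.} The second step is to identify the module $M$ whose wall $D(M)$ is the shared codimension one face, when we remove the split projective $Q_k$. Using the $g$-vector characterization of the faces of $R(T)$ recalled in subsection~\ref{ss41}---namely the face opposite the vertex $g(Q_k)$ is the portion of $D(M)$ in the convex hull of the remaining rays $g(T_i)$, $i\neq k$, where $M$ is the unique exceptional module with $g(T_i)\in D(M)$ for all $i\neq k$---I would show that this $M$ must be $\Sigma_k$, the top of $Q_k$ in $\cW(T)$. The cleanest route is via \cite{InTh}, Proposition~2.24, which tells us exactly which components of $T$ map nontrivially to objects of $\cW(T)$: since $Q_k$ is split projective, it is a projective object of $\cW(T)$ (by \cite{InTh}, Proposition~2.16), hence has a well-defined top $\Sigma_k$, a simple object of $\cW(T)$, with $\Hom_\Lambda(Q_k,\Sigma_k)\neq 0$ and $\Hom_\Lambda(T_i,\Sigma_k)=0$ for the other split projectives $T_i$ (distinct simple tops) and for all nonsplit projectives and shifted projectives. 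Translating the vanishing of these Homs (and the corresponding Ext's, using that $\Sigma_k\in\cW(T)$ so $\Ext^1_\Lambda(T_i,\Sigma_k)=0$ for $i\neq k$ by Ext-projectivity considerations in the wide subcategory) into the Virtual Stability Theorem (Theorem~\ref{thm: virtual stability theorem}) gives $g(T_i)\in D(\Sigma_k)$ for all $i\neq k$, while $g(Q_k)\notin D(\Sigma_k)$ since $\Hom_\Lambda(Q_k,\Sigma_k)\neq 0$. By uniqueness of the exceptional module defining that face, $M=\Sigma_k$.

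\textbf{Sign/direction step.} Third, I would check the color: the wall $D(\Sigma_k)$ obtained this way is red for $R(T)$, meaning $R(T)$ lies on the positive side, equivalently the mutation $T'\mapsto T$ crosses it in the green direction (consistent with Figure~\ref{Fig: mutation as wall crossing}). This is exactly the content of \cite{InTh}, Theorem~2.35 combined with the sign convention $\varepsilon_k = -\sgn(g(T_k)\cdot\beta_k)$ from the $c$-vector theorem quoted just before Lemma~A: since $g(Q_k)\cdot\undim\Sigma_k = \langle \undim Q_k, \undim\Sigma_k\rangle = \dim_K\Hom_\Lambda(Q_k,\Sigma_k) > 0$ (no higher Ext, as $\Lambda$ is hereditary and $Q_k$ is projective in $\cW(T)$ mapping onto its top), the sign $\varepsilon_k$ is negative, which is the ``red $c$-vector'' case. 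Conversely, given a wall $D(\Sigma)$ with $\Sigma$ simple in $\cW(T)$ and $Q$ the split projective mapping onto it, the same computation shows it is a red wall of $R(T)$, giving the ``if and only if''.

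\textbf{Main obstacle.} I expect the main difficulty is not any single deduction but making the dictionary between three languages precise: the Ext-tilting/torsion-class language of \cite{InTh} (split vs.\ nonsplit projectives, tops in wide subcategories), the $g$-vector/simplicial-fan language of subsection~\ref{ss41}, and the module-theoretic $D(M)$ walls. In particular I would need to be careful that ``the $g$-vector of $Q_k$'' and ``the face of $R(T)$ opposite it'' are matched up with the correct orientation, and that the exceptional module $M$ cut out by the remaining $g$-vectors is genuinely $\Sigma_k$ and not some other module in the wide subcategory with the same Hom-vanishing pattern---this is where uniqueness of exceptional modules by dimension vector, together with the rank of the Euler form restricted to $\cW(T)$, does the real work. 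Since all the hard theorems (2.16, 2.24, 2.35 of \cite{InTh}, plus the Virtual Stability Theorem) are available, the proof should be short, essentially a paragraph citing these and performing the one Euler-form computation above.
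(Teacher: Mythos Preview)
Your proposal is correct and follows essentially the same route as the paper's proof: both start from a split projective $Q_k$, take its simple top $\Sigma_k$ in $\cW(T)$, verify the Hom/Ext vanishing from the remaining summands (including the shifted projectives, via the support condition) to place their $g$-vectors on $D(\Sigma_k)$, and then read off the red color from $\Hom_\Lambda(Q_k,\Sigma_k)\neq 0$. The only difference is packaging: you outsource the green/red dichotomy for the nonsplit and shifted-projective walls to \cite{InTh}, Theorem~2.35, whereas the paper gives a short self-contained argument (the wall opposite such a vertex contains all split projectives, hence nothing in $Gen(T)$ maps to the corresponding $M$, so the wall is green).
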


\begin{proof}
Let $\Sigma_k\in\cW(T)$ be the simple top of the projective object $Q_k$. Then 
\[
\Hom_\Lambda(T/Q_k,\Sigma_k)=0=\Ext_\Lambda^1(T/Q_k,\Sigma_k)
\]
and the support of $\Sigma_k$ is contained in the support of $T$. Therefore, $D(\Sigma_k)$ is the unique wall that contains all vertices of the region $R(T)$ not equal to $Q_k$. Since $\Hom_\Lambda(Q_k,\Sigma_k)\neq0$, the wall is ``red'', i.e., the corresponding $c$-vector is negative.

If $P$ is a nonsplit projective summand of $T$ or $P=P_i[1]$, the wall opposite $g(P)$ in $R(T)$ is $D(M)$ for some $M$. However, $D(M)$ contains all of the split projectives in $Gen(T)$. Therefore, no object of $Gen(T)$ can map nontrivially to $M$. In particular, $P$ cannot map nontrivially to $M$. So, $D(M)$ is a ``green'' wall of $R(T)$.
\end{proof}

Recall that a support tilting module $T$ is a \emph{green mutation} of $T'$ if either $T=T'\oplus A$ for some exceptional module $A$ or $T=Q'\oplus B$, $T'=Q'\oplus B$ where $Q,Q'$ are exceptional modules so that $\Ext_\Lambda^1(Q,Q')=0$. We say that $T$ is a mutation of $T'$ \emph{through} $\Sigma$ if $\Sigma$ is the unique exceptional module with support in the support of $T$ so that $\Hom_\Lambda(B,\Sigma)=0=\Ext_\Lambda^1(B,\Sigma)$ where $B$ is the largest common summand of $T,T'$. In that case, the $c$-vector of the mutation is $\undim \Sigma$ and $D(\Sigma)$ is the wall that separates the regions $R(T)$ and $R(T')$ in the cluster fan as shown in Figure \ref{Fig: mutation as wall crossing}. (See \cite{IOTW2}.)

We recall \cite{InTh} that $Gen(\cW(T))=Gen(T)$.

\begin{lem}\label{a lemma}
Let $M_1,\cdots,M_m$ be a maximal forward hom-orthogonal sequence of Schurian modules. Then,
for every $0\le k\le m$ there is a support tilting module $T_k$ satisfying the following.
\begin{enumerate}
\item[(a)] $M_1,\cdots,M_k\in Gen(T_k)$.
\item[(b)] $\Hom_\Lambda(T_k,M_\ell)=0$ for all $\ell>k$. (So, $T_0=0$.)
\item[(c)] $M_{k+1},\cdots,M_m$ form a sequence of green mutations from $T_k$ to $\Lambda$. (So, $T_m=\Lambda$.)
\item[(d)] $M_k$ is a simple object of the wide subcategory $\cW(T_k)$.
\item[(e)] $T_{k-1}\mapsto T_k$ is a green mutation through $D(M_k)$. (So, $M_k$ is exceptional.)
\end{enumerate}
\end{lem}

For $k=0$ we see that $(c)$ implies that $\undim M_1,\cdots,\undim M_m$ are the $c$-vectors of a maximal green sequence which completes the proof of Theorem \ref{thm: hom-orthog = mgs}. Therefore, it suffices to prove this lemma.

\begin{proof}
By downward induction on $k$. When $k=m$ we have $T_m=\Lambda$ and $(a),(b),(c)$ hold trivially. So, suppose that $k\le m$ and that $(a)_k,(b)_k,(c)_k$ hold as well as $(c)_{k+1},(d)_{k+1}$ (which we take to be vacuous when $k=m$). It follows from $(a)_k,(b)_k$ and the maximality of $(M_i)$ that $M_1,\cdots,M_k$ is a maximal forward hom-orthogonal sequence of exceptional modules in $Gen(T_k)$.

$(d)_k$: Let $\Sigma_i$ be the simple objects of the wide subcategory $\cW(T_k)$. For each $i$ there is a $j_i\le k$ so that $\Hom(M_{j_i},\Sigma_i)\neq 0$, otherwise, we could insert $\Sigma_i$ after $M_k$ in the sequence $M_1,\cdots,M_k$ contradicting its maximality. Let $f_i:M_{j_i}\to \Sigma_i$ be nonzero. Since $M_{j_i}\in Gen(T_k)=Gen(\cW(T_k))$, the image of $f_i$ is in $\cW(T_k)$. Since $\Sigma_i$ is simple, $f_i$ must be onto. But $M_k\in Gen(\cW(T_k))$. So, there is a nonzero map $\Sigma_i\to M_k$ for some $i$. The composition $M_{j_i}\twoheadrightarrow \Sigma_i\to M_k$ is nonzero. So, we must have $M_{j_i}=M_k$. Since $M_k$ is Schurian, the composition $M_k\twoheadrightarrow \Sigma_i\to M_k$ must be an isomorphism. So, $\Sigma_i\cong M_k$, proving $(d)_k$.

$(e)_k$: By Lemma \ref{lem: mutation through Sigma}, we can let $T_{k-1}$ be the support tilting module obtained by mutation of $T_k$ though $M_k$.

$(c)_{k-1}$ holds by $(c)_k$ and the construction of $T_{k-1}$.

$(b)_{k-1}$: Let $B$ be the common summand of $T_k,T_{k-1}$ so that $T_k=B\oplus Q$ and $T_{k-1}=B\oplus Q'$. Since $B$ is a summand of $T_{k}$, $\Hom_\Lambda(B,M_\ell)=0$ for all $\ell>k$. Since the $g$-vectors of the components of $B$ lie on $D(M_k)$ we also have $\Hom_\Lambda(B,M_k)=0$. By Lemma \ref{lem: mutation through Sigma}, $Q'$ is a nonsplit projective in $Gen(T_{k-1})$ and $B$ is a generator of $Gen(T_{k-1})$. Therefore, $\Hom_\Lambda(T_{k-1},M_\ell)=0$ for all $\ell\ge k$.

$(a)_{k-1}$: Since $Q'$ is a nonsplit projective in $Gen(T_{k-1})$, $Gen(T_{k-1})=Gen(B)\subseteq Gen(T_k)$. To prove that $M_1,\cdots,M_{k-1}$ lie in $Gen(B)$, suppose not. Let $j$ be minimal so that $M_j$ is not in $Gen(B)$. Let $X\subset M_j$ be the trace of $B$ in $M_j$, i.e., the largest submodule of $M_j$ which lies in $Gen(B)$. Since this is a torsion class, $\Hom_\Lambda(B,M_j/X)=0$. But $M_j/X\in Gen(T_k)$ and $B$ is Ext-projective in this category. Therefore, $\Ext_\Lambda^1(B,M_j/X)=0$. This implies that the $g$-vector of $B$ lies in $D(M_k)$. So, $M_j/X\in add(M_k)$. So, $\Hom_\Lambda(M_j,M_k)=\neq0$ which is not possible for $j<k$. This proves $(a)_{k-1}$.

By induction on $m-k$, the proof of the Lemma is complete. 
\end{proof}

Lemma \ref{a lemma}(c) for $k=0$ shows that $(1)\Rightarrow (2)$ completing the proof of Theorem \ref{thm: hom-orthog = mgs}.
\end{proof}

%
%
\setcounter{section}4

\section{Proof of Theorem A}\label{ss: Lemma B proof}\label{sec5}

We prove Theorem \ref{lemma A} (Theorem \ref{cor: lemma a} below) as a consequence of Lemma \ref{prop: Lemma b}. Here $\Lambda$ is a finite dimensional hereditary algebra over a field $K$.

\begin{lem}\label{lem: hom orthogonal}
Let $X,Y$ be $\Lambda$-modules so that the interiors of $D(X),D(Y)$ meet as some point $x_0$.
Then $X,Y$ are hom orthogonal.
\end{lem}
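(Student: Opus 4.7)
The plan is to recognize $X$ and $Y$ as simple objects of the wide subcategory $\cW(\{x_0\}) \subseteq nmod$-$\cM$ from Section \ref{ss: wide subcategory proof}, and then apply Schur's lemma inside this abelian subcategory.

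First I would check that $X$ is simple in $\cW(\{x_0\})$. The inclusion $X \in \cW(\{x_0\})$ is immediate from $x_0 \in D(X)$. If $M'$ is a subobject of $X$ in the wide subcategory --- i.e., both $M'$ and $X/M'$ lie in $\cW(\{x_0\})$ --- then $x_0 \in D(M')$ gives $x_0 \cdot \undim M' = 0$. But $x_0 \in int\,D(X)$ forces $x_0 \cdot \undim M' < 0$ whenever $0 \ne M' \subsetneq X$. So $M' = 0$ or $M' = X$, and $X$ is simple in $\cW(\{x_0\})$. The same argument gives $Y$ simple in $\cW(\{x_0\})$.

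A Schur's-lemma argument then finishes the proof: for any $f : X \to Y$ inside $\cW(\{x_0\})$, both $\ker f$ and $\coker f$ lie in $\cW(\{x_0\})$ by its closure under kernels and cokernels (Lemma \ref{lem: W(S) is closed under ext, ker, coker}); simplicity of $X$ and $Y$ then forces $f$ to be either $0$ or an isomorphism. Under the implicit assumption that $X \not\cong Y$, both $\Hom(X, Y)$ and $\Hom(Y, X)$ vanish, i.e., $X$ and $Y$ are hom-orthogonal.

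There is no serious obstacle: once the role of $\cW(\{x_0\})$ is recognised, the statement is just an instance of Schur's lemma in a suitable abelian subcategory. A more concrete alternative argument avoids the wide-subcategory vocabulary altogether: a nonzero $f : X \to Y$ has image $Z$, which is at once a nonzero quotient of $X$ --- yielding $x_0 \cdot \undim Z = -x_0 \cdot \undim (\ker f) \ge 0$, with equality iff $\ker f = 0$, i.e., $Z = X$ --- and a nonzero submodule of $Y$, yielding $x_0 \cdot \undim Z \le 0$, with equality iff $Z = Y$. Combining forces $Z = X = Y$ and $f$ an isomorphism, so the same conclusion follows.
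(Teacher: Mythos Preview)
Your proof is correct. Both the Schur's-lemma argument in $\cW(\{x_0\})$ and the direct computation with $\undim Z$ are valid, and you are right to flag the implicit hypothesis $X\not\cong Y$ (without which the statement is false, since $x_0\in int\,D(X)=int\,D(X)$ trivially).

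The paper takes a genuinely different route: it chooses a linear stability function whose associated path $\gamma_Z$ passes through $x_0$, then uses that $x_0$ lies in the \emph{interiors} of $D(X)$ and $D(Y)$ to perturb the path so that it crosses $D(X)$ strictly before $D(Y)$, and again so that it crosses $D(Y)$ strictly before $D(X)$. The Hom-vanishing corollary of the HN-filtration (Corollary after Theorem~\ref{thm: HN filtration}) then gives $\Hom(X,Y)=0$ and $\Hom(Y,X)=0$ respectively. This argument is in keeping with the paper's geometric viewpoint but invokes more machinery (paths, green stability, HN-filtrations). Your argument is more self-contained: it uses only the definition of $int\,D(M)$ and the wide-subcategory facts from Section~\ref{ss: wide subcategory proof}. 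It also handles without fuss the edge case where $\undim X$ and $\undim Y$ are proportional, so that $H(X)=H(Y)$ and no linear perturbation can separate the two crossing times; the paper's one-line sketch tacitly assumes this does not occur (which is true in the only application, to $M$ and $S_i$ in the proof of Proposition~\ref{prop: Lemma b}).
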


\begin{proof}
By assumption the linear stability function whose corresponding path goes through $x_0$ has perturbations which go through $D(X),D(Y)$ in either order. So, $X,Y$ are hom-orthogonal.
\end{proof}

\begin{lem}\label{b-lemma 0}
Let $\sigma$ be a codimension $k$ simplex in the cluster complex of $\Lambda$. Then $\cW(\sigma)$ is equal to $\cW_0(\sigma)$, the wide subcategory of $mod$-$\Lambda$ generated by the exceptional objects of $\cW(\sigma)$.
\end{lem}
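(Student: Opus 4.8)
The plan is to reduce the equality to a statement about the \emph{simple} objects of the abelian category $\cW(\sigma)$, and then to identify $\cW(\sigma)$ with the module category of a smaller hereditary algebra.

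The inclusion $\cW_0(\sigma)\subseteq\cW(\sigma)$ is immediate, since $\cW(\sigma)$ is a wide subcategory of $mod\text-\Lambda$ containing all the exceptional objects of $\cW(\sigma)$. For the reverse inclusion it is enough to prove that every simple object of the finite length category $\cW(\sigma)$ is an exceptional $\Lambda$-module: an arbitrary object of $\cW(\sigma)$ has a finite composition series in $\cW(\sigma)$ whose factors would then lie in $\cW_0(\sigma)$, and $\cW_0(\sigma)$ is closed under extensions. So fix a simple object $\Sigma$ of $\cW(\sigma)$. That $\Sigma$ is Schurian is Schur's lemma in $\cW(\sigma)$: a nonzero endomorphism of $\Sigma$, computed in the full subcategory $\cW(\sigma)$ and hence in $mod\text-\Lambda$, is monic and epic, so $\End_\Lambda(\Sigma)$ is a division algebra. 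It remains to show $\Ext^1_\Lambda(\Sigma,\Sigma)=0$.

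For this I would realize $\cW(\sigma)$ as $mod\text-\Gamma$ for a hereditary algebra $\Gamma$, by induction on $\dim\sigma$. Take a cluster tilting object $T=\bigoplus T_i$ with $\sigma$ a face of $R(T)$; then $\sigma$ is the cone on the $g$-vectors of those summands $T_j$, $j\notin I$, and by the reduction theory of the cluster complex in \cite{IOTW2} these summands can be ordered into an exceptional sequence $(E_1,\dots,E_{n-k})$ in $\cD^b(\Lambda)$. Since each $D(M)$ is a convex polyhedral cone (the intersection of $H(M)$ with finitely many linear half-spaces), $\sigma\subseteq D(M)$ holds iff $g(E_a)\in D(M)$ for all $a$; and for the exceptional object $E_a$ and an \emph{arbitrary} module $M$ the Virtual Stability Theorem still applies: if $g(E_a)\in D(M)$ and $0\neq f\colon E_a\to M$ had image $N$, then $g(E_a)\cdot\undim N=\dim_K\Hom_\Lambda(E_a,N)-\dim_K\Ext^1_\Lambda(E_a,N)>0$ because $\Ext^1_\Lambda(E_a,N)=0$ for every quotient $N$ of the rigid module $E_a$ over the hereditary algebra $\Lambda$, contradicting $N\subseteq M$; the case $E_a=P_i[1]$ is similar. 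Hence $\cW(\sigma)$ is the perpendicular category $\{E_1,\dots,E_{n-k}\}^{\perp}\cap mod\text-\Lambda$. When $\dim\sigma=0$ this is all of $mod\text-\Lambda$; in general, dropping $E_1$ produces a face $\tau$ of $R(T)$ with $\cW(\tau)\simeq mod\text-\Gamma'$ (hereditary, one more simple) by the inductive hypothesis, $E_1$ is an exceptional object of $\cW(\tau)$, and $\cW(\sigma)$ is the right perpendicular category of the single exceptional object $E_1$ inside $\cW(\tau)$, which by the classical perpendicular-category theorem for hereditary categories is again $mod\text-\Gamma$ for a hereditary $\Gamma$ with one fewer simple.

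Once $\cW(\sigma)\simeq mod\text-\Gamma$ with $\Gamma$ hereditary, the proof finishes quickly: the quiver of $\Gamma$ is acyclic, so every simple $\Gamma$-module has no self-extension and a division endomorphism algebra, i.e.\ is an exceptional object of $\cW(\sigma)$; and exceptionality passes to $mod\text-\Lambda$ because $\cW(\sigma)$ is a full, extension-closed subcategory, so the endomorphism algebra and $\Ext^1$ of each of its objects agree with those computed in $mod\text-\Lambda$. Thus the simple objects of $\cW(\sigma)$ are exceptional $\Lambda$-modules, which completes the reduction and the proof. The main obstacle is the induction step: showing that the perpendicular category cut out by a face $\sigma$ is a module category over a hereditary algebra. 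I would reduce it, as above, to the perpendicular of a single exceptional object inside a hereditary category, where it is classical; but the bookkeeping — ordering the relevant summands of $T$ into an exceptional sequence and handling summands of the form $P_i[1]$ — is exactly what must be imported from \cite{IOTW2}.
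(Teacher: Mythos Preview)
Your argument is correct, but it takes a genuinely different route from the paper's. The paper proceeds by a direct induction on $\dim_K M$: it uses that $\cW_0(\sigma)$ has $k$ simple objects $S_1,\dots,S_k$ with projective covers $P_i$, observes that $\undim M=\sum\lambda_i\undim S_i$ with some $\lambda_i>0$, and then the Euler--Ringel pairing $\langle\undim P_i,\undim M\rangle=\lambda_i\langle\undim P_i,\undim S_i\rangle>0$ produces a nonzero map $P_i\to M$; peeling off its image (which lies in $\cW_0(\sigma)$) and inducting finishes the job. Your approach instead identifies $\cW(\sigma)$ with the right perpendicular $\{E_1,\dots,E_{n-k}\}^\perp$ and, by iterated Geigle--Lenzing/Schofield reduction, with $mod\text-\Gamma$ for a finite-dimensional hereditary $\Gamma$; then the simple objects of $\cW(\sigma)$ are automatically exceptional. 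The paper's proof is shorter and avoids invoking the perpendicular-category theorem, needing only the Euler form and the existence of enough projectives in $\cW_0(\sigma)$; your approach imports more machinery but gives the stronger structural statement $\cW(\sigma)\simeq mod\text-\Gamma$ along the way, which is independently useful. Two small points to tighten in your write-up: when $E_1=P_i[1]$ it is not an object of $\cW(\tau)\subseteq mod\text-\Lambda$, so you should order the $E_a$ so that the shifted projectives are handled first (their perpendicular is just restriction of support, yielding another finite-dimensional hereditary algebra), before applying Schofield to the genuine modules; and in the inductive step you need the convention on exceptional sequences so that $E_1\in\{E_2,\dots,E_{n-k}\}^\perp$, i.e.\ $E_1$ really lies in $\cW(\tau)$.
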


\begin{proof} Since $codim\,\sigma=k$, $\cW_0(\sigma)$ has $k$ simple objects $S_1,\cdots,S_k$. Let $M\in \cW(\sigma)$, $M\notin\cW_0(\sigma)$. Consider first the case when $M$ is a subobject of some $X\in \cW_0(\sigma)$.

The condition $\sigma\subseteq D(M)$ implies that $\undim M=\sum \lambda_i S_i$ for some $\lambda\in\mathbb Z$. Since $\undim M$ has nonnegative entries, at least one $\lambda_i$ is positive. Then $\left< \undim P_i,\undim M\right>=\lambda_i\left< \undim P_i,\undim S_i\right> >0$ where $P_i$ is the projective cover of $S_i$ in $\cW_0(\sigma)$. So, there is a nonzero morphism $f:P_i\to M$ with image $Y\subset M\subset X$. Since $P_i,X$ both lie in $\cW_0(\sigma)$, so do $Y$ and $X/Y$. Then the exact sequence $Y\to M\to M/Y\subset X/Y$ shows, by induction on the size of $M$ that $M\in\cW_0(\sigma)$.

Now consider a general element $M\in\cW(\sigma)$. As before there is a nonzero map $f:P_i\to M$. By the previous case, $\ker f\in\cW_0(\sigma)$. So, the image of $f$ also lies in $\cW_0(\sigma)$. But $\coker f$ is an object of $\cW(\sigma)$ which is smaller than $M$. So, it also lies in $\cW_0(\sigma)$. Since $\cW_0(\sigma)$ is closed under extension, $M$ lies in $\cW_0(\sigma)$ as claimed. 
\end{proof}

\begin{lem}\label{prop: Lemma b}
If $D(M)$ meets the interior of a cluster simplex $\sigma$ then $D(M)$ contains $\sigma$. Equivalently, $M\in \cW(\sigma)$.
\end{lem}

\begin{proof} Suppose not. Let $M,\sigma$ form a counterexample so that $(\dim M,\dim \sigma)$ is minimal in lexicographic order. Let $x_0\in D(M)\cap int\,\sigma$. Equivalently, $M\in \cW(x_0)$. \vskip.2cm

\underline{Claim 1}: $x_0$ lies in the interior of $D(M)$. Equivalently, $M$ is a simple object of $\cW(x_0)$.\vskip.2cm

Proof: If not, $x_0\in D(M')$ for some $0\neq M'\subseteq M$. Then $M',M/M'$ lie in $\cW(x_0)$ and thus also in $\cW(\sigma)$ by induction on $\dim M$. Since $\cW(\sigma)$ is closed under extension, $M\in \cW(\sigma)$ and $M$ is not a counterexample. \vskip.2cm

Claim 1 implies that $F_M=\End(M)$ is a division algebra. So, $M$ is indecomposable.\vskip.2cm

\underline{Claim 2}: $\dim\sigma=1$. So, $codim\, \sigma=n-2$.\vskip.2cm

Proof: Since any $x_0\in D(M)\cap int\,\sigma$ lies in the interior of $D(M)$, $D(M)\cap \sigma=H(M)\cap \sigma$. The hyperplane $H(M)$ cuts $\sigma$ into two parts. Taking two vertices of $\sigma$ on opposite sides of $H(M)$ we get an edge of $\sigma$ whose interior meets $D(M)$. By minimality of $\dim\sigma$ this edge is equal to $\sigma$.
\vskip.2cm

Since $\sigma$ is an edge in the cluster complex of $\Lambda$, there are two ext-orthogonal exceptional objects $T_1,T_2$ of the cluster category of $\Lambda$ so that their $g$-vectors $E^t\undim T_i$ form the endpoints of $\sigma$. Let $T=T_1\oplus T_2$. Then $T^\perp$, the full subcategory of $mod$-$\Lambda$ of all $X$ so that $\Hom_\Lambda(T,X)=0=\Ext^1_\Lambda(T,X)$ is the wide subcategory $\cW_0(\sigma)$ of $mod$-$\Lambda$ of rank $n-2$ spanned by $n-2$ simple objects $S_1,\cdots,S_{n-2}$ and generated by the corresponding projective objects $P_1,\cdots,P_{n-2}$. By Lemma \ref{b-lemma 0}, $\cW_0(\sigma)=\cW(\sigma)$.\vskip.2cm

\underline{Claim 3}: There is an extension $P\hookrightarrow E\twoheadrightarrow M$ in $\cW(x_0)$ with the following properties.
\begin{enumerate}
\item $P$ is a projective object of $\cW(\sigma)=\cW_0(\sigma)$.
\item $E^\perp$ contains $\cW(\sigma)$. Equivalently, $E$ is in the wide subcategory spanned by $T$.
\item $E$ is indecomposable.
\end{enumerate}\vskip.2cm

Suppose for a moment that Claim 3 holds. Let $\cW_0$ be the wide subcategory of $mod$-$\Lambda$ spanned by $T$. If $D(E)\cap \sigma$ contains only $x_0$ then, up to reordering, we must have $\Hom_\cD(T_1,E)\neq 0$ and $\Ext^1_\cD(T_2,E)\neq0$ where $\cD$ is the bounded derived category of $\cW_0$. But this is not possible since $T_1,T_2$ are consecutive objects in the Auslander-Reiten sequence of the cluster category of the rank 2 hereditary abelian category $\cW_0$. Thus Claim 3 leads to a contradiction proving the proposition.\vskip.2cm

Proof of Claim 3: The extension $P\to E\to M$ is a lifting of the universal extension of $M$ by the simple objects of $\cW(\sigma)$. More precisely, for each $i$, let $e_{ij}\in\Ext^1_\Lambda(M,S_i)$, $j=1,\cdots,m_i$ form a basis of $\Ext^1_\Lambda(M,S_i)$ over the division algebra $F_M=\End_\Lambda(M)$. Since $\Ext^1$ is right exact, $e_{ij}$ lift to elements $\tilde e_{ij}\in\Ext^1_\Lambda(M,P_i)$. Let $P=\coprod P_i^{m_i}$ and let $P\to E\to M$ be the extension of $M$ by $P$ corresponding to the element of $\Ext^1_\Lambda(M,P)=\coprod\Ext^1_\Lambda(M,P_i)^{m_i}$ with $ij$ term $\tilde e_{ij}$. Then for any simple object $S_i$ of $\cW(\sigma)$ the connecting homomorphism in the six term sequence:
\[
	0\to (M,S_i)\to (E,S_i)\to (P,S_i)\xrightarrow\cong \Ext^1_\Lambda(M,S_i)\to \Ext^1_\Lambda(E,S_i)\to \Ext^1_\Lambda(P,S_i)\to 0
\]
is an isomorphism where $(X,Y)$ is short for $\Hom_\Lambda(X,Y)$. Since $\Hom_\Lambda(M,S_i)=0$ by Claim 1 and Lemma \ref{lem: hom orthogonal} and $\Ext^1_\Lambda(P,S_i)=0$ since $P$ is projective, we conclude that $E^\perp$ contains each $S_i$ and therefore all of $\cW(\sigma)$ proving (2).

Since $M$ is hom-orthogonal to all $S_i$, $\Hom_\Lambda(M,P)=0$. So, $\Hom_\Lambda(E,M)\cong \Hom_\Lambda(M,M)$ is one-dimensional over $F_M$. Therefore, $E$ has one component which maps onto $M$ and any other component of $E$ must lie in $P$. But $\Hom_\Lambda(E,P)=0$ by (2). So, $E$ has only one component. This proves (3) in Claim 3 and completes the proof of the proposition.
\end{proof}

\begin{thm}[Theorem \ref{lemma A}]\label{cor: lemma a}
Given a hereditary algebra $\Lambda$ and a nonrigid module $M$, the semistability set $D(M)$ does not meet the interior of any simplex of the cluster complex of codimension $0$ or $1$.
\end{thm}

\begin{proof}
Since $D(A\oplus B)=D(A)\cap D(B)$ (Lemma \ref{lem: W(S) is closed under summands}), we may assume that $M$ is indecomposable. If $D(M)$ meets the interior of a simplex $\sigma$ then, by Lemma \ref{prop: Lemma b}, $D(M)$ contains $\sigma$. Equivalently, $M\in\cW(\sigma)$. This is not possible if $\sigma$ has full dimension since $D(M)$ lies in a hyperplane. So, $codim\, \sigma=1$ and $\sigma\subseteq D(M_\beta)$ for some exceptional module $M_\beta$. By Lemma \ref{b-lemma 0}, $\cW(\sigma)=\cW_0(\sigma)=add\,M_\beta$. So, $M\cong M_\beta$ is rigid, contrary to assumption.
\end{proof}

\section*{Acknowledgements}

This paper is an expanded version of a lecture given in the workshop on ``Quivers, cluster algebras and mathematical physics'' at the Chinese University in Hong Kong in December, 2016. This was an extremely productive meeting and I am grateful to Man Wai Cheung and all the organizers for hosting this wonderful event. In particular, I want to thank Yang-Hui He, Gregg Musiker and Yu Qiu for some very insightful questions and comments which influenced this work.

I also benefited from conversations with Hipolito Treffinger and Osamu Iyama after the first draft of this paper was written. I should also thank Tom Bridgeland for a conversation many years ago when we compared our respective definitions of wall crossing. The anonymous referee also deserves praise for greatly improving the presentation of the material.

Finally, I would like to thank Lutz Hille for persuading me to look at the linearity problem in 2013 and reminding me more recently of the difference between a central charge and a classical slope function.




\begin{thebibliography}{aa}

\bibitem{AI} PJ Apruzzese and Kiyoshi Igusa, \emph{Stability conditions for affine type A}, arXiv:1804.09100. 


\bibitem{ASS} I. Assem, D. Simson and A. Skowro\'nski, Elements of the representation theory of associative algebras, 1: Techniques of representation theory, London Mathematical Society Student Texts, vol. 65 (Cambridge University Press, Cambridge, 2006). 



\bibitem{B}Tom Bridgeland, \emph{Stability conditions on triangulated categories}, Ann. Math. \textbf{166}, No. 2 (Sep., 2007), pp. 317--345. 

\bibitem{B2} Tom Bridgeland, \emph{Spaces of stability conditions}, Algebraic geometry--Seattle 2005. Part 1 (2009): 1--21. 


\bibitem{BDP}
Thomas Br\"{u}stle, Gr\'egoire Dupont, and  Matthieu P\'erotin, \emph{On maximal green sequences}, Int Math Res Notices (2014), 4547--4586. 

\bibitem{BHIT}
Thomas Br\"ustle, Stephen Hermes, Kiyoshi Igusa and Gordana Todorov, \emph{Semi-invariant pictures and two conjectures on maximal green sequences}, J Algebra {\bf 473}, March 2017, 80--109. 

\bibitem{BST}Thomas Br\"ustle, David Smith, Hipolito Treffinger, \emph{Stability conditions, tau-tilting theory and maximal green sequences}, arXiv:1705.08227 (2017). 

\bibitem{BST2} Thomas Br\"ustle, David Smith, Hipolito Treffinger, \emph{Stability conditions and maximal green sequences in abelian categories}, arXiv:1805.04382 (2018). 

\bibitem{BMRRT}
Aslak~Bakke Buan, Robert~J. Marsh, Markus Reineke, Idun Reiten, and Gordana Todorov, \emph{Tilting theory and cluster combinatorics}, Adv. Math. \textbf{204} (2006), no.~2, 572--618. 

\bibitem{BR} M. C. R. Butler and C. M. Ringel, \emph{Auslander-Reiten sequences with few middle terms and applications to string algebras}, Comm. Algebra 15 (1987), 145--179. 

  
\bibitem{DW}
Harm Derksen and Jerzy Weyman, \emph{Semi-invariants of quivers and saturation for {L}ittlewood-{R}ichardson coefficients}, J. Amer. Math. Soc. \textbf{13} (2000), no.~3, 467--479 (electronic). 

\bibitem{DW2}
Harm Derksen and Jerzy Weyman, \emph{On the canonical decomposition of quiver representations}, Compositio Mathematica, September 2002, Volume 133, Issue 3, 245--265. 


\bibitem{FZ}
Sergey Fomin and Andrei Zelevinsky, \emph{Cluster algebras. {IV}. {C}oefficients}, Compos. Math. \textbf{143} (2007), no.~1, 112--164. 

\bibitem{FR} H. Franzen and M. Reineke, \emph{Semi-stable Chow-Hall algebras of quivers and quantized Donaldson-Thomas invariants}, arXiv:1512.03748. 

\bibitem{GSZ} T.L. G\'omez, I. Sols and A. Zamora, \emph{A GIT interpretation of the Harder-Narasimhan filtration}, A. Rev Mat Complut (2015) 28: 169--190. 

\bibitem{HN} G. Harder and M. S. Narasimhan, \emph{On the cohomology groups of moduli spaces of vector bundles on curves}, Math. Ann 212 (1974/75), 215--248. 

\bibitem{Hubery} Andrew Hubery, \emph{The cluster complex of an hereditary artin algebra}, Algebras and Representation Theory, December 2011, Volume 14, Issue 6, 1163--1185. 

\bibitem{IOTW} Kiyoshi Igusa, Kent Orr, Gordana Todorov and Jerzy Weyman, \emph{Cluster complexes via semi-invariants}, Compos. Math. 145 (2009), no. 4, 1001--1034. 

 \bibitem{IOTW2}
  Kiyoshi Igusa, Kent Orr, Gordana Todorov, and Jerzy Weyman, \emph{Modulated semi-invariants}, arXiv: 1507.03051v2. 
  
\bibitem{PartII} Kiyoshi Igusa, \emph{Maximal green sequences for cluster-tilted algebras of finite type}, arXiv:1706.06503.  

\bibitem{InTh} Colin Ingalls and Hugh Thomas, \emph{Noncrossing partitions and representations of quivers}, Compos. Math. {\bf145} (2009), no. 6, 1533--1562. 

\bibitem{Kase} Ryoichi Kase, \emph{Remarks on lengths of maximal green sequences for quivers of type $\tilde A_{n,1}$}, arXiv:1507.02852. 

\bibitem{Keller} {Bernhard Keller}, \emph{On cluster theory and quantum dilogarithm identities}, Representations of algebras and related topics, EMS Ser. Congr. Rep., Eur. Math. Soc., Z\"urich, 2011, pp. 85--116.

\bibitem{KD} Laurent Demonet, Bernhard Keller, \emph{A survey on maximal green sequences}, arXiv:1904.09247. 
.
\bibitem{KQ}Alastair King and Yu Qiu, \emph{Exchange graphs and Ext quivers}, Adv. Math. {\bf285} (2015), 1106--1154. 

 \bibitem{King} 
 Alastair D. King, \emph{Moduli of representations of finite dimensional algebras}, Quart. J. Math. Oxford (2), \textbf{45} (1994), 515--530. 
 
 
 \bibitem{KS} Maxim Kontsevich and Yan Soibelman, \emph{Stability structures, motivic Donaldson-Thomas invariants and cluster transformations}, arXiv:0811.2435. 
 
 \bibitem{LL} Fang Li, Siyang Liu, \emph{On maximal green sequences in abelian length categories}, arXiv:1910.06510. 


\bibitem{NZ} 
Tomoki Nakanishi, Andrei Zelevinsky, \emph{On tropical dualities in cluster algebras}, Contemp. Math. \textbf{265},  217--226. 

\bibitem{Q}Yu Qiu, \emph{Stability conditions and quantum dilogarithm identities for Dynkin quivers}, Adv. Math. {\bf269} (2015), 220--264. 



\bibitem{Reading} Nathan Reading, \emph{Universal geometric cluster algebras}, Reading, Nathan. "Universal geometric cluster algebras." Mathematische Zeitschrift 277.1-2 (2014): 499-547.

\bibitem{R} Markus Reineke, \emph{The Harder-Narasimhan system in quantum groups and cohomology of quiver moduli}, Invent. Math. {\bf152} (2003), no. 2, 349--368. 


\bibitem{Rudakov} Alexei Rudakov, \emph{Stability for an abelian category}, J. of Algebra {\bf 197} (1997), 231--245. 

\bibitem{T} R.P. Thomas, \emph{Stability conditions and the braid group}, arXiv:0112214v5. 


\bibitem{Z} Alfonso Zamora, \emph{On the Harder-Narasimhan filtration for finite dimensional representations of quivers}, Geom. Dedicata, 170 (2014), 185--194. 

\end{thebibliography}
\end{document}